\title{Circular Planar Resistor Networks with Nonlinear and Signed Conductors}
\author{Will Johnson}
\newcommand{\dist}{\operatorname{d}}
\newcommand{\rank}{\operatorname{rank}}
\newtheorem{theorem}{Theorem}[section] % numbered like the section
\newtheorem{lemma}[theorem]{Lemma} % numbered like the theorems
\newtheorem{proposition}[theorem]{Proposition}
\newtheorem{corollary}[theorem]{Corollary}
\newtheorem{definition}[theorem]{Definition}
\newtheorem{conjecture}[theorem]{Conjecture}
\newtheorem{remark}[theorem]{Remark}
\begin{document}
\maketitle

\abstract{We consider the inverse boundary value problem in the case of discrete electrical networks containing nonlinear (non-ohmic) resistors.
Generalizing work of Curtis, Ingerman, Morrow, Colin de Verdi\`ere, Gitler, and Vertigan, we characterize the circular planar graphs for which the inverse boundary value problem has a solution in this generalized non-linear setting.  The answer is the same as in the linear setting.  Our method of proof never requires that the resistors
behave in a continuous or monotone fashion; this allows us to recover signed conductances in many cases.
We apply this to the problem of recovery in graphs that are not circular planar.  We also use our results to make a frivolous knot-theoretic statement,
and to slightly generalize a fact proved by Lam and Pylyavskyy about factorization schemes in their electrical linear group. }

\section{Introduction}\label{sec:intro}
Consider a network of resistors, containing several boundary nodes where current is allowed to flow in and out.  If we fix the voltages at these boundary nodes,
all interior voltages and currents will be determined, and in particular the amount of current flowing in and out at each boundary node is determined.  Thus,
boundary voltages determine boundary currents.  The relationship between boundary voltages
and boundary currents can be encapsulated in a matrix $\Lambda$, the \emph{response matrix} or \emph{Dirichlet-to-Neumann matrix}.

Viewing the circuit as a black box whose internals are hidden, the matrix $\Lambda$ exactly describes the behavior of the circuit.  It is never possible
to determine the internal structure of a circuit from $\Lambda$, but if we are given the structure of the underlying graph,
then in many cases we can determine all the resistances from $\Lambda$.  The problem of recovering the resistances from boundary measurements
is a discrete version of the inverse boundary value problem first posed by Calderon.

We say that a graph
is \emph{recoverable} if the recovery problem has a solution.  Working independently, \cite{CIM} and \cite{ReseauxElectriques} completely analyzed the case of
circular planar networks, finding a combinatorial criterion for recoverability, and finding algorithms to carry out the recovery when possible.
In this paper, we generalize these results to allow for non-linear (non-ohmic) conductors.

In \S \ref{sec:forward-primal}-\ref{sec:forward-dual}, we consider the forward problems, for fixed networks.  Assuming that current at each edge is given
as a continuous, monotone, zero-preserving function of voltage at each edge in the graph, we show that boundary voltages determine all internal currents
uniquely.  In particular, there is a well-defined Dirichlet-to-Neumann map from boundary voltages to boundary currents.
Dually, if voltage is given as a continuous monotone zero-preserving
function of current at each edge, then boundary currents more or less determine all interior currents, and there is essentially a well-defined Neumann-to-Dirichlet
map from boundary voltages to boundary currents.

In subsequent sections we consider the inverse problem of determining the conductance functions at each edge from the Dirichlet-to-Neumann map
or the Neumann-to-Dirichlet map.  We are forced to assume that the conductance functions are bijective and zero-preserving, but our approach somehow doesn't require
continuity or monotonicity, rather surprisingly.  Our method of proof is apparently a variant of the approach used in \cite{ReseauxElectriques}, though
substantial modifications are necessary, because Y-$\Delta$ transformations are no longer available.
Section \ref{sec:convexity} is devoted to showing that a certain notion of convexity behaves well in simple pseudoline arrangements.  This notion of convexity is
used to model the propagation of information in unusual boundary-value problems which we use to read off conductances near the boundary, in a layer-stripping approach.

Our main result is the following fact: if $G$ is a circular planar graph for which recovery is possible in
the usual setting with linear conductors and positive conductances, then recovery is still possible when (sufficiently well-behaved) nonlinear and signed conductors
are allowed.  In \S \ref{sec:applications}, we apply this to two settings in which negative conductances naturally appear.  In \S \ref{sec:el2n}, we restate our results
from the point of view of Lam and Pylyavskyy's electrical linear group and consider the analogy between the electrical linear group and total positivity in
the general linear group.

\tableofcontents

\section{The Dirichlet-to-Neumann Map}\label{sec:forward-primal}
In this section and the next, we prove some simple facts about general networks with nonlinear conductors.\footnote{We include these elementary results primarily because
they were previously unknown within the Mathematics REU at the University of Washington (which focuses on the recovery problem in electrical networks).}
\begin{definition}A \emph{graph with boundary} $\Gamma$ is a graph whose set of vertices $V$ have been partitioned into a set of boundary vertices $\partial \Gamma$
and interior vertices $\Gamma^{\textrm{int}}$.
\end{definition}
We allow our graphs to contain multiple edges and self-loops, though the latter are completely irrelevant in what follows.  If $e$ is a directed
edge in our graph, we let $\overline{e}$ be the edge going in the opposite direction.
\begin{definition}
If $\Gamma$ is a graph with boundary, a \emph{conductance network on $\Gamma$} is a collection of functions $\gamma_e : \mathbb{R} \to \mathbb{R}$, one for
each directed edge $e$ in $\Gamma$, such that $\gamma_e(-x) = -\gamma_{\overline{e}}(x)$
and such that each $\gamma_e$ is zero-preserving ($\gamma_e(0) = 0$), monotone ($\gamma_e(x) \ge \gamma_e(y)$ if $x \ge y$), and continuous.
\end{definition}
We abuse notation and use $\Gamma$ for both the network and the graph.
\begin{definition}
A \emph{voltage function} on a graph with boundary $\Gamma$ is a function $\phi$ from the vertices of $\Gamma$ to $\mathbb{R}$.  A \emph{current function}
on $\Gamma$ is a function $\iota$ from the directed edges of $\Gamma$ to $\mathbb{R}$, such that
\[ \iota(e) = -\iota(\overline{e})\]
and such that for every vertex $v$ in the \emph{interior} of $\Gamma$, $\sum_{e} \iota(e) = 0$, where the sum ranges over all edges leaving the vertex $v$.  A
voltage function $\phi$ and current function $\iota$ are \emph{compatible (with respect to some conductance network on $\Gamma$)} if
\[ \iota(e) = \gamma_e(\phi(x) - \phi(y))\]
whenever $e$ is an edge leading from vertex $x$ to vertex $y$.  Given such a pair, the \emph{boundary voltage function} is the restriction
of $\phi$ to $\partial \Gamma$, while the \emph{boundary current function} is the function that assigns to each vertex $v \in \partial \Gamma$,
the sum $\sum_e \iota(e)$ where $e$ ranges over the edges leaving from $v$.
\end{definition}
In other words, the interpretation of the function $\gamma_e$ is the function that specifies current in terms of voltage along that edge.  In the
\emph{linear} case, $\gamma_e(x) = c_e x$, where $c_e$ is the conductance along edge $e$.

Given a conductance network and a set of boundary voltages, the \emph{Dirichlet boundary value problem} is to find a voltage function extending the given boundary
values, together with a compatible
current function.  In general, the solution to the Dirichlet problem may not be unique, but solutions always exist, and
the current function is uniquely determined:
\begin{theorem}\label{dirichlet-problem}
Let $\Gamma$ be a conductance network and let $f$ be any function from $\partial \Gamma$ to $\mathbb{R}$.  Then there is a voltage function
$\phi$ and a compatible current function $\iota$ on $\Gamma$ that has $f$ as the boundary voltage function.  Moreover, $\iota$ is uniquely determined.
\end{theorem}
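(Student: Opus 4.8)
The plan is to realize the Dirichlet problem as the Euler--Lagrange condition for a convex energy, and then to read off uniqueness of the current directly from monotonicity. For each directed edge $e$ from $x$ to $y$ set $\Psi_e(s) = \int_0^s \gamma_e(t)\,dt$. Since $\gamma_e$ is continuous and nondecreasing, $\Psi_e$ is a nonnegative $C^1$ convex function with $\Psi_e(0)=0$ and $\Psi_e' = \gamma_e$; the relation $\gamma_e(-x) = -\gamma_{\overline e}(x)$ gives $\Psi_e(s) = \Psi_{\overline e}(-s)$, so that $\Psi_e(\phi(x)-\phi(y))$ depends only on the underlying unoriented edge. I would then define, on the affine space of voltage functions $\phi$ extending $f$, the energy
\[ E(\phi) = \sum_{e} \Psi_e\bigl(\phi(x)-\phi(y)\bigr), \]
the sum taken over one orientation of each edge. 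As a sum of convex functions of the interior voltages, $E$ is convex and continuous, and differentiating shows that $\phi$ is a critical point of $E$ (equivalently, since $E$ is convex, a global minimizer) if and only if $\sum_{e \textrm{ from } v}\gamma_e(\phi(x)-\phi(y)) = 0$ at every interior vertex $v$; that is, if and only if setting $\iota(e) = \gamma_e(\phi(x)-\phi(y))$ produces a current function compatible with $\phi$. Thus existence of a solution is exactly the assertion that $E$ attains its infimum.

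Establishing that $E$ actually attains a minimum is the crux, and I expect it to be the main obstacle, because the conductances are only assumed monotone (not strictly so) and need not be bounded or bounded away from $0$: a ``dead'' or one-way edge makes $E$ fail to be coercive, so one cannot simply invoke compactness of sublevel sets. The device I would use is that $E$ is \emph{eventually nondecreasing along every ray}: along $t \mapsto \phi_0 + tv$ the argument of each $\Psi_e$ eventually has constant sign, and $\Psi_e$ is monotone on each side of $0$, so each term, and hence $E$, is nondecreasing for large $t$. Combined with convexity this forces $E$ to be eventually constant along any direction of recession, whence a minimizing sequence may be taken bounded and a minimizer extracted by continuity. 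This produces the desired pair $(\phi,\iota)$ and proves existence.

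For uniqueness of the current I would argue by a discrete Green's identity together with monotonicity, and here continuity is not even needed. Given two solutions $(\phi_1,\iota_1)$ and $(\phi_2,\iota_2)$, put $\psi = \phi_1 - \phi_2$, which vanishes on $\partial\Gamma$. Summation by parts rewrites $\sum_v \psi(v)\sum_{e\textrm{ from }v}\bigl(\iota_1(e)-\iota_2(e)\bigr)$ as $\sum_e \bigl(\psi(x)-\psi(y)\bigr)\bigl(\iota_1(e)-\iota_2(e)\bigr)$; the former vanishes because the inner sum is $0$ at interior vertices (Kirchhoff for both solutions) and $\psi = 0$ at boundary vertices. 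Writing $a = \phi_1(x)-\phi_1(y)$ and $b = \phi_2(x)-\phi_2(y)$, each summand on the right equals $(a-b)\bigl(\gamma_e(a)-\gamma_e(b)\bigr) \ge 0$ by monotonicity of $\gamma_e$, so every summand must vanish. Since $(a-b)\bigl(\gamma_e(a)-\gamma_e(b)\bigr) = 0$ forces $\gamma_e(a) = \gamma_e(b)$ (trivially when $a=b$, and directly otherwise), we conclude $\iota_1(e) = \iota_2(e)$ on every edge. Hence the current is uniquely determined, even though $\phi$ itself need not be, precisely because along a dead edge the two voltages may differ without affecting any current.
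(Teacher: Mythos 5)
Your variational setup coincides with the paper's own: your energy $E$ is exactly the paper's ``pseudopower'' $Q$ (up to the factor of two coming from summing over both orientations), and your identification of critical points with solutions of the Dirichlet problem is the same computation. Where you diverge is the attainment step, and there your argument has a genuine gap. From ``$E$ is eventually constant along any direction of recession'' you conclude that ``a minimizing sequence may be taken bounded,'' but this inference is not justified: along a recession direction $v$ the profile $t \mapsto E(\phi + tv)$ may strictly decrease before flattening (for a one-way edge with $\gamma_e(t) = \min(t,0)$ one gets $q_e(x) = x^2/2$ for $x \le 0$ and $q_e(x) = 0$ for $x \ge 0$, whose restriction to a ray is exactly of this decreasing-then-flat shape), and translating a far-away minimizing sequence back along $-v$ can \emph{increase} the energy, since $E$ is only guaranteed nonincreasing in the $+v$ direction. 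Repairing this requires an actual argument, e.g.\ passing to the partial minimization $\tilde{E}(\phi) = \lim_{t \to \infty} E(\phi + tv)$, observing that for your specific $E$ the terms with $v(x) \neq v(y)$ contribute constants (the flat tail values of the corresponding $\Psi_e$), so that $\tilde{E}$ is again an energy of the same form on a quotient of one lower dimension, and inducting on dimension. The paper sidesteps all of this with a clipping trick worth comparing: with $M = \max_{v \in \partial\Gamma} |f(v)|$ and $\mu$ the monotone $1$-Lipschitz retraction of $\mathbb{R}$ onto $[-M,M]$, one has $Q(\mu \circ \phi) \le Q(\phi)$ \emph{termwise}, because clipping preserves the sign of each difference $\phi(x) - \phi(y)$ while shrinking its magnitude, and each $q_e$ is bowl-shaped (Equation (\ref{bowl-shape})); hence a minimizer over the compact cube of voltage functions valued in $[-M,M]$ is automatically a global minimizer, and existence follows from compactness alone.

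Your uniqueness argument, by contrast, is complete, correct, and genuinely different from the paper's. The paper shifts the conductance functions to reduce to zero boundary data, orients the edges where the current difference is positive, and runs a combinatorial argument (the resulting directed subgraph is acyclic with no interior sources or sinks, forcing a contradiction at the boundary). Your discrete Green's identity replaces all of that with one summation by parts plus the pointwise inequality $(a-b)\bigl(\gamma_e(a) - \gamma_e(b)\bigr) \ge 0$, and your observation that vanishing of this product forces $\gamma_e(a) = \gamma_e(b)$ even when $a \neq b$ isolates precisely why currents, but not voltages, are unique. Both arguments use only monotonicity and $\gamma_e(0) = 0$ and neither needs continuity, but yours is shorter and makes the mechanism more transparent; it is a legitimate improvement on this half of the theorem.
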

Note that $\phi$ need not be uniquely determined.  This occurs for example if every function $\gamma_e$ is the zero function and there are any interior vertices.
\begin{proof}
We first show existence.  Let $\mathbb{R}^V$ be the vector space of all voltage functions.  Let $W$ be the affine subspace of $\mathbb{R}^V$
containing only functions which agree with $f$ on $\partial \Gamma$.  For each directed edge $e$, let
\[ q_e(x) = \int_0^x \gamma_e(t) dt\]
This function is $C^1$, nonnegative, and convex because of the properties of $\gamma_e$.  We also have $q_e(0) = 0$, and $q_e(-x) = q_{\overline{e}}(x)$.  Moreover,
if $x$ and $y$ are both nonnegative (or both nonpositive), and $|x| \le |y|$, then
\begin{equation} q_e(x) \le q_e(y).\label{bowl-shape}\end{equation}
This is easy to verify.

For
$\phi \in \mathbb{R}^V$, we define the \emph{pseudopower} $Q(\phi)$ to be
\[ \sum_{e \in E} q_e(\phi(\textrm{start}(e)) - \phi(\textrm{end}(e)))\]
where $\textrm{start}(e)$ and $\textrm{end}(e)$ are the start and end of $e$, and the sum is over all directed edges.  Note that the terms
for $e$ and $\overline{e}$ agree, so we are essentially counting each edge twice.

In the linear case, this function is the usual power, the sum over voltage times current for each edge.  In the more general case, however,
it is not this value, so we call it the pseudopower.

As a sum of nonnegative convex $C^1$ functions, pseudopower is a nonnegative
convex $C^1$ function on $\mathbb{R}^V$, and also on $W$.  We aim to show that $Q(\cdot)$ attains a minimum
on $W$.  To see this, first let $M$ be the maximum of $|f(v)|$ as $v$ ranges over $\partial \Gamma$.  Let $K$ be the hypercube subset of $W$
containing only voltage functions which take values in $[-M,M]$.  Then $Q(\cdot)$ attains a minimum on $K$ because $K$ is compact.  Let $\phi_0$
be such a minimum on $K$.  We claim that $\phi_0$ is in fact a global minimum.  To see this, let $\phi$ be any other voltage function in $W$.
Let $\mu:\mathbb{R} \to [-M,M]$ be the retraction of $\mathbb{R}$ onto $[-M,M]$ that sends $(M,\infty)$ to $M$ and $(-\infty,-M)$ to $-M$.  Let
$\phi' = \mu \circ \phi$.  Then clearly $\phi'$ still agrees with $f$ on the boundary, so $\phi' \in W$, and in fact $\phi' \in K$.  Moreover,
because $\mu$ is a monotone short map and because the $q_e$ functions satisfy Equation (\ref{bowl-shape}), it follows easily that
\[ Q(\phi) \ge Q(\phi') \ge Q(\phi_0).\]
Since $\phi \in W$ was arbitrary, $\phi_0$ does indeed minimize $Q$ on all of $W$.

For any directed edge $e$ from vertex $x$ to vertex $y$, let $\iota(e) = \gamma_e(\phi(x) - \phi(y))$.  Then $\iota$ and $\phi$ will be compatible
current and voltage functions as long as $\iota$ is a current function.  The property that $\iota(e) = -\iota(\overline{e})$ follows easily
from properties of $\gamma_e$, so it remains to show that for every interior vertex $v$,
\[ \sum_{e \textrm{ with }v = \textrm{start}(e)} \iota(e) = 0.\]
This follows directly by taking the partial derivative of $Q(\phi)$ with respect to $\phi(v)$, and using the fact that $\phi_0$ is a critical point.
Specifically, the partial derivative
\[ \frac{\partial Q(\phi)}{\phi(v)} = \sum_{e \textrm{ with }v = \textrm{start}(e)} q_e'(\phi(v) - \phi(\textrm{end}(e)))
 - \sum_{e \textrm{ with }v = \textrm{end}(e)} q_e'(\phi(\textrm{start}(e)) - \phi(v)) \]
\[ = \sum_{e \textrm{ with }v = \textrm{start}(e)} \gamma_e(\phi(v) - \phi(\textrm{end}(e)))
- \sum_{e \textrm{ with }v = \textrm{end}(e)} \gamma_e(\phi(\textrm{start}(e)) - \phi(v)) =\]
\[ 2
\sum_{e \textrm{ with }v = \textrm{start}(e)} \gamma_e(\phi(v) - \phi(\textrm{end}(e)))\]
where the last line follows by the change of variables $e \leftrightarrow \overline{e}$ in the second sum, and
the fact that
\[ -\gamma_e(\phi(\textrm{start}(e)) - \phi(\textrm{end}(e))) = \gamma_{\overline{e}}(\phi(\textrm{start}(\overline{e})) - \phi(\textrm{end}(\overline{e}))).\]

This establishes existence.  It remains to show that the current function $\iota$ is unique.  Suppose we had two current functions $\iota_1$ and $\iota_2$
respectively compatible with two voltage functions $\phi_1$ and $\phi_2$, both of which agreed with $f$ on $\partial \Gamma$.  Define a new conductance network
$\hat{\Gamma}$ on the same graph by letting
\[ \hat{\gamma_e}(x) = \gamma_e(x + \phi_2(\textrm{start}(e)) - \phi_2(\textrm{end})(e)) - \iota_2(e)\]
This is a valid conductance function because
\[ \gamma_e(\phi_2(\textrm{start}(e)) - \phi_2(\textrm{end})(e)) = \iota_2(e)\]
implies that $\hat{\gamma_e}(0) = 0$, and the other properties are easy to verify.  One easily verifies that $\hat{\phi} = \phi_1 - \phi_2$ and
$\hat{\iota} = \iota_1 - \iota_2$ are now compatible voltage and current functions for $\hat{\Gamma}$.  Moreover, the boundary values
of $\hat{\phi}$ are all zero, because $\phi_1$ and $\phi_2$ agreed on the boundary.  So we are reduced to showing that if the boundary voltages vanish,
all currents must vanish.

Consider the directed subgraph of $\hat{\Gamma}$ in which we include a directed edge $e$ if and only if $\hat{\iota}(e) > 0$.  Since $\hat{\iota}$ equals
\[ \hat{\gamma_e}(\hat{\phi}(\textrm{start}(e)) - \hat{\phi}(\textrm{end}(e)))\]
and $\hat{\gamma_e}$ is monotone and zero-preserving, every edge $e$ in the subgraph must also satisfy
\[ \hat{\phi}(\textrm{start}(e)) > \hat{\phi}(\textrm{end}(e))\]
In particular then, our subgraph must be acyclic.  We claim that it has no interior `sources' or `sinks' in the following sense: there cannot be any
interior node $v$ which has at least one coming in and none coming out, or at least one edge going out and none coming in.  These follow from the facts
that the sum of $\hat{\iota}(e)$ for the edges incident to $v$ is zero, and a sum of positive and nonnegative numbers cannot be zero.

For $x, y \in V$, let $x \to y$ indicate that there is a chain of edges in the subgraph leading from $x$ to $y$, oriented in the correct way.  This relation
is obviously transitive, and irreflexive because $x \to y$ implies that $\hat{\phi(x)} > \hat{\phi(y)}$.  Suppose for the sake of contradiction that there is at least one edge $e$ in the subgraph.  Let $e$ go from $x_0$ to $y_0$.  Then choose $y$ with $y = y_0$ or $y \to y_0$, that is maximal in the sense that no $y'$ exists
with $y \to y'$.  Similarly choose a minimal $x'$ with $x' \to x_0$ or $x' = x_0$.  Because there are no internal sources or sinks, $x_0$ and $y_0$ must be boundary
nodes.  We also have $x_0 \to y_0$.  Thus $\hat{\phi}(x_0) > \hat{\phi}(y_0)$, contradicting the fact that $\hat{\phi}$ vanishes on the boundary.

So this subgraph is empty.  This means that for every edge $e$, $\hat{\iota}(e) \le 0$.  But then for every edge $e$ we also have $\hat{\iota}(\overline{e})
= -\hat{\iota}(e) \le 0$.  So $\hat{\iota}$ vanishes everywhere, and $\iota_1$ and $\iota_2$ agree.
\end{proof}
Since the current function determines the boundary current function, it follows that there is a well-defined map, the \emph{Dirichlet-to-Neumann map},
from boundary voltage functions to boundary current functions.

\section{The Neumann-to-Dirichlet Map}\label{sec:forward-dual}
This section is completely dual to the previous one.  Here we assume that voltage is given as a suitably nice function of current, and show essentially that
boundary currents uniquely determine boundary voltages.  There are some caveats, however: a given set of boundary currents must add up to zero on each connected
component of a graph, or else there are no solutions at all.  Additionally, there is latitude in choosing the voltages: adding a constant to each
value of a voltage function preserves compatibility.  In fact, we can add a separate constant on each connected component of $\Gamma$.

\begin{definition}
If $\Gamma$ is a graph with boundary, a \emph{resistance network on $\Gamma$} is a function that assigns
to each directed edge $e$ a function $\rho_e:\mathbb{R} \to \mathbb{R}$ which is
\begin{itemize}
\item zero-preserving: $\rho_e(0) = 0$
\item monotone: $\rho_e(x) \ge \rho_e(y)$ if $x \ge y$
\item continuous
\end{itemize}
such that $\rho_e(-x) = -\rho_{\overline{e}}(x)$.
\end{definition}
\begin{definition}
A voltage function $\phi$ and current function $\iota$ on $\Gamma$ are \emph{compatible (with respect to some resistance network on $\Gamma$)} if
\[ \rho_e(\iota(e)) = \phi(x) - \phi(y)\]
whenever $e$ is an edge leading from vertex $x$ to vertex $y$.  Given such a pair, the \emph{boundary voltage function} is the restriction
of $\phi$ to $\partial \Gamma$, while the \emph{boundary current function} is the function that assigns to each vertex $v \in \partial \Gamma$,
the sum $\sum_e \iota(e)$ where $e$ ranges over the edges leaving from $v$.
\end{definition}
In other words, the interpretation of the function $\rho_e$ is the function that specifies voltage in terms of current along that edge.  In the
\emph{linear} case, $\rho_e(x) = r_ex$, where $r_e$ is the resistance (reciprocal of conductance) along edge $e$.

The dual to Theorem~\ref{dirichlet-problem} is the following:
\begin{theorem}\label{neumann-problem}
Let $f$ be a real-valued function on $\partial \Gamma$, summing to zero on each connected component of $\Gamma$.  Then there is a voltage function
$\phi$ and a compatible current function $\iota$ with $f$ as the boundary current function.  Moreover, given two such solutions
$\phi_1$ and $\phi_2$, $\phi_1 - \phi_2$ is constant on each connected component of $\Gamma$.
\end{theorem}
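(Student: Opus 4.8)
The plan is to run the argument of Theorem~\ref{dirichlet-problem} with the roles of voltage and current exchanged, replacing the pseudopower by its dual and minimizing over flows rather than over potentials. For each directed edge $e$ set $p_e(x) = \int_0^x \rho_e(t)\,dt$; exactly as in the primal case this is a nonnegative, convex, $C^1$ function with $p_e(0)=0$, $p_e(-x)=p_{\overline{e}}(x)$, and the analogue of~(\ref{bowl-shape}). Call a current function $\iota$ \emph{admissible} if its boundary current function is the prescribed $f$; the admissible functions form an affine space $\mathcal{I}_f$ whose underlying vector space $\mathcal{I}_0$ is the space of circulations (antisymmetric, divergence-free edge functions, i.e.\ the cycle space). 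I would define the \emph{co-content} $P(\iota) = \sum_{e} p_e(\iota(e))$ and seek a minimizer of $P$ over $\mathcal{I}_f$.

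First I would check that $\mathcal{I}_f \ne \emptyset$: since $f$ sums to zero on each connected component, a flow realizing $f$ can be built by decomposing $f$ into source--sink pairs within each component and pushing a unit of flow along a connecting path for each pair. Granting a minimizer $\iota$, I would extract compatibility from the Euler--Lagrange condition: differentiating $P$ in a circulation direction $c$ gives $\sum_e \rho_e(\iota(e))\,c(e)=0$ for every circulation $c$. The edge function $g(e)=\rho_e(\iota(e))$ is antisymmetric, so this says $g$ is orthogonal to the cycle space; taking $c$ to be the indicator of a directed cycle shows that the signed sum of $g$ around every cycle vanishes, so $g$ is a coboundary, and there is a voltage function $\phi$ with $\rho_e(\iota(e)) = \phi(\mathrm{start}(e)) - \phi(\mathrm{end}(e))$. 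This is precisely compatibility, and $\phi$ is determined only up to an additive constant on each component, which already points toward the uniqueness clause.

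The main obstacle is attainment of the minimum. Unlike the primal problem, $P$ need not be coercive on $\mathcal{I}_f$: if the network contains a cycle of zero-resistance edges (an edge with $\rho_e\equiv 0$ on a ray contributes $p_e\equiv 0$ there), then $P$ is constant along the corresponding circulation and its sublevel sets are unbounded. I would resolve this dually to the primal's retraction-to-a-box step. The key structural fact is that $p_e$ is \emph{exactly} flat, not merely asymptotically flat, on any ray where its recession rate vanishes: monotonicity together with $\rho_e(0)=0$ forces $\rho_e\equiv 0$ there. Hence for any recession direction $u$ of $P$ — necessarily a circulation along which every edge contributes zero recession rate — the function $P$ is \emph{eventually constant} along the ray $\iota_0 + tu$. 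I would therefore take a minimizing sequence, decompose each term as the fixed admissible flow $\iota_0$ plus a circulation, and cancel flow around the flat cycles to pull the sequence into a fixed compact subset of $\mathcal{I}_f$ without increasing $P$; compactness then yields an actual minimizer. This flow-cancellation is the exact dual of clamping interior voltages into $[-M,M]$, and I expect it to be the most delicate part to write carefully.

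Finally, for uniqueness of $\phi$ up to per-component constants, I would mirror the reduction in Theorem~\ref{dirichlet-problem}. Given two solutions $(\phi_1,\iota_1)$ and $(\phi_2,\iota_2)$ with the same boundary current $f$, define $\hat{\rho}_e(x) = \rho_e(x+\iota_2(e)) - (\phi_2(\mathrm{start}(e))-\phi_2(\mathrm{end}(e)))$, a valid resistance network (zero-preservation is exactly the compatibility of $(\phi_2,\iota_2)$) for which $\hat{\phi}=\phi_1-\phi_2$ and $\hat{\iota}=\iota_1-\iota_2$ are compatible; here $\hat{\iota}$ is a circulation, since its boundary currents are $f-f=0$ and it is divergence-free at interior vertices. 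Now I would use the energy identity
\[ \sum_e \hat{\iota}(e)\bigl(\hat{\phi}(\mathrm{start}(e))-\hat{\phi}(\mathrm{end}(e))\bigr) = 2\sum_v \hat{\phi}(v)\,\mathrm{div}_{\hat{\iota}}(v) = 0, \]
while each summand on the left equals $\hat{\iota}(e)\,\hat{\rho}_e(\hat{\iota}(e)) \ge 0$ by monotonicity and zero-preservation of $\hat{\rho}_e$. Hence every summand vanishes, which forces $\hat{\rho}_e(\hat{\iota}(e)) = 0$ for all $e$ (using $\hat{\rho}_e(0)=0$ when $\hat{\iota}(e)=0$); that is, $\hat{\phi}(\mathrm{start}(e))=\hat{\phi}(\mathrm{end}(e))$ across every edge, so $\hat{\phi}=\phi_1-\phi_2$ is constant on each connected component.
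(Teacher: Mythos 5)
Your proposal is correct, and its skeleton coincides with the paper's own proof: the paper also establishes existence by minimizing exactly your co-content (there called $Q(\iota) = \sum_e q_e(\iota(e))$) over the affine space $W$ of flows with boundary current $f$, extracts the cycle condition $\sum_i \rho_{e_i}(\iota(e_i)) = 0$ by differentiating along the unit circulation of each directed cycle, integrates to obtain $\phi$, and reduces uniqueness to the case $f \equiv 0$ by the same shift of the resistance functions. You diverge in two places. For attainment of the minimum, the paper never analyzes recession directions: it cancels flow around \emph{every} positively oriented cycle, not just the flat ones --- subtracting the minimal current around the cycle, which does not increase $Q$ by the analogue of (\ref{bowl-shape}) and terminates because the number of zero-current edges strictly increases --- thereby landing in the set $K$ of acyclic flows, which is compact via the explicit bound $|\iota(e)| \le M = \sum_{v \in \partial\Gamma} |f(v)|$ coming from an upstream/downstream cut. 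This is simpler than your plan: your observations are sound (zero recession rate on an edge does force $\rho_e$ to vanish identically on the relevant ray, by monotonicity and $\rho_e(0)=0$, and flat recession circulations do decompose into flat cycles), but you then owe the delicate extraction of a recession direction from an unbounded minimizing sequence, which the paper's coarser cancellation into $K$ avoids entirely. For uniqueness you genuinely depart from the paper: it argues combinatorially --- the subgraph of edges with $\hat\phi(\mathrm{start}(e)) > \hat\phi(\mathrm{end}(e))$ carries strictly positive current by monotonicity, is acyclic, and has no sources or sinks (interior by KCL, boundary because the boundary current vanishes), hence is empty --- whereas you use the summation-by-parts energy identity together with $x\,\hat\rho_e(x) \ge 0$, forcing every edge's voltage drop to vanish. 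Your energy argument is complete and arguably slicker, and it is the standard variational companion to the minimization setup; the paper's version instead mirrors verbatim the source/sink argument from its proof of Theorem~\ref{dirichlet-problem}, buying uniformity across the two dual theorems rather than brevity.
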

\begin{proof}
We first show existence.  Let $W$ be the affine space of current functions on $\Gamma$ which have $f$ as their boundary currents.  This space is
nonempty because of the requirement that $f$ sum to zero on each connected component - we leave the proof of this as an exercise to the reader.
For each edge $e$, let
\[ q_e(x) = \int_0^x \rho_e(t) dt\]
As before, these functions are nonnegative, convex, and $C^1$ and satisfy Equation (\ref{bowl-shape}).  For $\iota \in W$, let $Q(\iota)$ be the sum
\[ Q(\iota) = \sum_e q_e(\iota(e))\]
Then as before, $Q(\cdot)$ is convex, nonnegative, and $C^1$.

Let $M = \sum_{v \in \partial \Gamma} |f(v)|$ and let $K$ be the set of all current functions $\iota \in W$ for which there are no directed cycles of edges
$e_1, e_2, \ldots, e_n$ with $\iota(e_i) > 0$ for all $i$.  We claim that $K$ is compact, because $|\iota(e)| \le M$ for $\iota \in K$
and $e$ any edge.  This is true because the acyclicity of $\iota$ implies that we can partition the vertices of $\Gamma$ into a set of
vertices `upstream' from $e$ and a set of vertices `downstream' from $e$, and the total current flowing through $e$ must be
at most the total current flowing into the upstream set, which is at most $M$.  So $K$ is bounded, and clearly closed.  Thus $Q$ attains a
minimum on $K$, at some $\iota_0$.

As before, we wish to show that $\iota_0$ is a global minimum.  To see this, take any $\iota \in W \setminus K$.  Then $\iota$ contains a directed
cycle of edges along which current is flowing in a positive direction.  Letting $m$ be the minimum of these positive current values, create
another current function $\iota'$ by subtracting $m$ from the current along each of these edges (and adding $m$ to the currents along the edges
going in the opposite directions).  Then clearly $\iota'$ is still a current function, and it has the same boundary currents as $\iota$.  Moreover,
since the current did not change in sign along any edge, and did not increase in magnitude along any edge, it follows by the analogue
of Equation (\ref{bowl-shape}) that $Q(\iota') \le Q(\iota)$.  Now $\iota'$ may have more cycles, and so may not yet be in $K$.  However,
by repeating this procedure at most finitely many times, we get an $\iota'' \in K$ with $Q(\iota) \ge Q(\iota'') \ge Q(\iota_0)$.  The process
must terminate after finitely many steps because the number of edges along which no current is flowing increases at each step.  Since $\iota$ was
arbitrary, $\iota_0$ is indeed a global minimum.

Take $\iota = \iota_0$.  Then we need to produce a voltage function $\phi$ for which
\[ \phi(x) - \phi(y) = \rho_e(\iota(e))\]
whenever $e$ is an edge from $x$ to $y$.  This is possible exactly when
\begin{equation} \sum_i \rho_{e_i}(\iota(e_i)) = 0\label{cycle-condition}\end{equation}
for every directed cycle $e_1, e_2, \ldots, e_n$.  To see that Equation (\ref{cycle-condition}) holds, take our cycle $e_1, e_2, \ldots, e_n$,
and let $\psi$ be the current function on $\Gamma$ which takes the value $1$ along each of the $e_i$, $-1$ along each $\overline{e_i}$, and 0 elsewhere.
Then $\psi$ is a current function with boundary current zero, so $\iota + t\psi \in W$ for every $t \in \mathbb{R}$.  Then the fact that $\iota$ is
a global minimum of $Q(\cdot)$ on $W$ implies that
\[ \frac{\partial Q(\iota + t\psi)}{\partial t}\]
vanishes at $t = 0$.
But by an easy direct calculation, this gives Equation (\ref{cycle-condition}).

Next we show that $\phi$ is essentially uniquely determined.  Let $\phi_1$ and $\phi_2$ be two solutions, respectively compatible
with $\iota_1$ and $\iota_2$.  As in the previous section, we can shift the resistance functions and produce a new network
which has $\phi_1 - \phi_2$ as a voltage function compatible with the current function $\iota_1 - \iota_2$.  So without loss of generality,
$f$ is identically zero, and we need to show that if $\phi$ and $\iota$ are compatible voltage and current functions with boundary current zero,
then $\phi$ is constant on each connected component.

Similar to the previous theorem, we take a subgraph containing all edges $e$ for which
\[ \phi(\textrm{start}(e)) > \phi(\textrm{end}(e))\]
Since voltage is given as a monotone function of current, this implies that $\iota(e) > 0$ too.  As before, there can then be no cycles and no
internal sources and sinks.  But in fact, since the boundary current is also zero, there can be no sources or sinks on the boundary either.  But
any finite directed acyclic graph without sources or sinks must be empty.  Therefore, for every edge $e$ we must have
\[ \phi(\textrm{start}(e)) \le \phi(\textrm{end}(e))\]
But then applying this to $\overline{e}$, we see that
\[ \phi(\textrm{start}(e)) \ge \phi(\textrm{start}(e))\]
too.  Thus $\phi$ must be constant on each connected component.
\end{proof}

Let $A$ be the vector space of real-valued functions $f$ on $\partial \Gamma$ with the property that for every connected component $C$ of $\Gamma$,
\[ \sum_{v \in C \cap \partial \Gamma} f(v) = 0.\]
Then it is easy to see that there is a well-defined Neumann-to-Dirichlet map $A \to A$ from boundary voltage functions in $A$ to boundary current functions in
$A$.  
In the case of a conductance or resistance network where the conductance or resistance functions are \emph{bijections},
it follows by combining Theorems~\ref{dirichlet-problem} and \ref{neumann-problem} that we have a well-defined bijection between boundary voltage
functions in $A$ and boundary current functions in $A$.
In other words,
the Dirichlet-to-Neumann map and the Neumann-to-Dirichlet map are essentially inverses of each other.  Also, boundary voltages uniquely determine interior
voltages in this case, so the solution to the Dirichlet problem is unique.

\section{The Inverse Problem}\label{sec:inverse-problem}
Given the Dirichlet-to-Neumann map or the Neumann-to-Dirichlet map, can we determine the resistance or conductance functions?  Without putting
additional restrictions on our conductance or resistance functions, the answer is no in almost all cases, because of the following problem.
Consider the Y-shaped network of Figure~\ref{fig1-y-3}.
If we put conductance functions of $\arctan x$ on two of the three edges, then the total current through the third edge can never exceed $\pi$.
Consequently, if we put a conductance function of $\gamma_3(x) = x$ on the third edge, this yields the same Dirichlet-to-Neumann map as
the alternative conductance function
\[ \tilde{\gamma}_3(x) =
\begin{cases}
x & \textit{if } - \pi \le x \le \pi \\
\pi & \textit{if } x \ge \pi \\
-\pi & \textit{if } x \le -\pi
\end{cases}
\]
Similar arguments work as long as any interior vertices exist.  In the dual case where voltage is given as a function of current, a similar problem
occurs in any graph that contains cycles.

\begin{figure}
\centering
\def \svgwidth{2in}
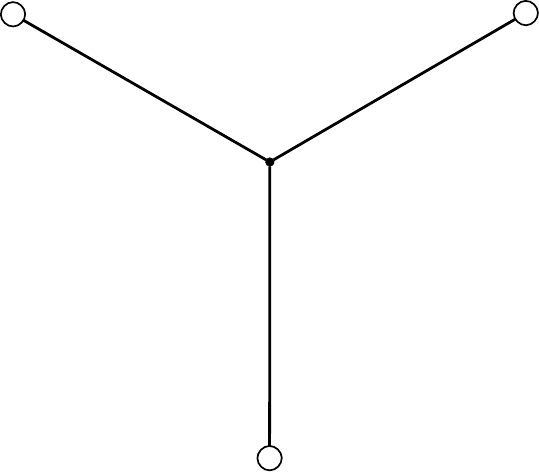
\caption{A graph for which the general recovery problem is impossible.  The dark middle vertex is an interior vertex, while the light vertices
are boundary vertices.}
\label{fig1-y-3}
\end{figure}

To resolve this difficulty, we instead consider networks where the conductance functions or resistance functions are bijections:
\begin{definition}
A \emph{bijective network} is a graph $\Gamma$ with boundary, and a function that assigns to each directed edge $e$ in $\Gamma$,
a \emph{conductance function} $\gamma_e:\mathbb{R} \to \mathbb{R}$, which satisfies the following properties:
\begin{itemize}
\item $\gamma_e(0) = 0$
\item $\gamma_e$ is a bijection for every $e$
\item $\gamma_e(-x) = -\gamma_{\overline{e}}(x)$
\end{itemize}
We say that the network if \emph{monotone} if every $\gamma_e$ is order-preserving ($x \ge y \Rightarrow \gamma_e(x) \ge \gamma_e(y)$), and we say that the
network is \emph{linear} if every $\gamma_e$ is linear (of the form $\gamma_e(x) = c_ex$ for some $c_e \in \mathbb{R}$).

For a fixed graph $\Gamma$ with boundary, a \emph{bijective conductance structure} on $\Gamma$ is a choice of the functions $\gamma_e$.
\end{definition}
The definition of compatible current and voltage functions and boundary values is the same as for conductance networks.

Note that we do not require the $\gamma_e$ to be continuous.
In the case
of monotone bijective networks, however, the $\gamma_e$ are all continuous, so by combining Theorems~\ref{dirichlet-problem} and Theorems~\ref{neumann-problem}
we have well-defined Dirichlet-to-Neumann and Neumann-to-Dirichlet maps.  In the general case of bijective networks, current may not be uniquely determined by
voltage, and vice versa, so we use a relation instead of a function:
\begin{definition}
If $\Gamma$ is a bijective network with $n$ boundary nodes, then the \emph{Dirichlet-Neumann relation} is the relation $\Lambda \subseteq \mathbb{R}^n \times
\mathbb{R}^n$ consisting of all pairs $(u,f)$, where $u$ is the boundary voltage of a voltage function $\phi$, $f$ is the boundary current of a current function
$\iota$, and $\phi$ and $\iota$ are compatible.
\end{definition}
The inverse problem is the problem of determining the conductance functions of a bijective network given the Dirichlet-Neumann relation and
the underlying graph.

\begin{definition}
A graph $\Gamma$ with boundary is \emph{strongly recoverable} if no two distinct bijective network structures on $\Gamma$ have the same Dirichlet-Neumann relation.
$\Gamma$ is \emph{weakly recoverable} if no two distinct monotone linear bijective network structures on $\Gamma$ have the same Dirichlet-Neumann relation.
\end{definition}
In other words, a graph is strongly recoverable if the inverse problem has a solution, in the sense that the conductance functions are uniquely determined
by the Dirichlet-Neumann relation.  A graph is weakly recoverable if this holds with the assumption that the conductance functions are monotone linear.
Weak recoverability is the type of recoverability considered in \cite{CIM} and \cite{ReseauxElectriques}.

\begin{definition}
Let $\Gamma$ be a graph with boundary.  We say that $\Gamma$ is \emph{circular planar} if it can be embedded in a disk $D$
with all boundary vertices on $\partial D$ and all interior vertices in the interior of $D$.
\end{definition}
Equivalently, $\Gamma$ is circular planar if the graph obtained by adding an extra vertex $v$ and connecting $v$ to every vertex
in $\partial \Gamma$ is planar.

Generalizing results of \cite{CIM} and \cite{ReseauxElectriques},
we will determine exactly which circular planar graphs are recoverable,
showing that for circular planar graphs, weak recoverability and strong recoverability are equivalent.
The techniques of \cite{CIM} used linear algebra and are no longer applicable in a non-linear setting, while the techniques of \cite{ReseauxElectriques}
will require modification because the Y-$\Delta$ transformation is no longer available.

\section{Covoltages}\label{sec:covoltages}
Working with voltages and currents is somewhat asymmetric because voltage functions live on vertices and are unconstrained,
while current functions live on edges and are constrained by Kirchhoff's Current Law.  To make matters more symmetric,
it will help to work with \emph{covoltages} rather than currents.

\begin{definition}
If $\Gamma$ is a circular planar graph with boundary (with a fixed planar embedding), then a \emph{covoltage function} on $\Gamma$
is an arbitrary real-valued function on the faces of $\Gamma$.  If $\psi$ is a covoltage function, the \emph{associated current
function} is the current function which assigns a value of $\psi(f_1) - \psi(f_2)$ to a directed edge with face $f_1$ on its left
and $f_2$ on its right.
\end{definition}
\begin{figure}
\centering
\def \svgwidth{5in}
\small
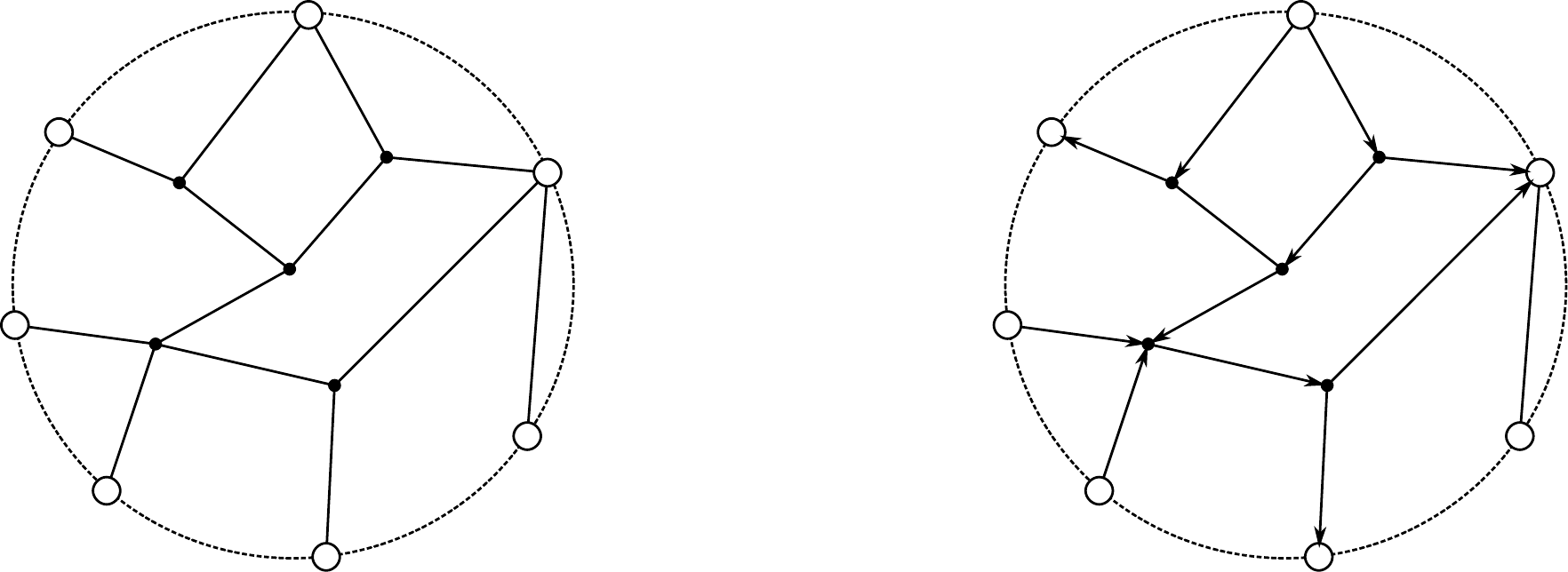
\caption{Covoltages on the left, and the corresponding currents on the right.  Arrows point in the direction of current flow.}
\label{fig10-covoltage}
\end{figure}
See Figure~\ref{fig10-covoltage} for an example.  We summarize the key facts about covoltage functions in the following lemma:
\begin{lemma}
~
\begin{itemize}
\item The associated current function of a covoltage function is a current function (it satisfies KCL).
\item If $\psi$ is a covoltage function, with associated current function $\iota$, then the boundary current
of $\iota$ at a boundary vertex $v$ is $\psi(f_1) - \psi(f_2)$, where $f_1$ and $f_2$ are the faces
on the left and right sides of $v$ (from $v$ facing into the graph).  See Figure~\ref{fig11-boundary-covoltage}
\item A covoltage function is determined by its associated current function up to a constant of integration.  In other words,
if two covoltage functions have the same associated current function, then their difference is constant.
\item Every current function is associated to a one covoltage function.
\end{itemize}
\end{lemma}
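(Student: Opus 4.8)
The plan is to verify each of the four bulleted claims in turn, treating the covoltage $\psi$ as a discrete potential on the dual graph and the associated current function $\iota$ as its discrete ``curl.'' Throughout I would exploit the planar embedding: every directed edge $e$ has a well-defined left face and right face, and reversing $e$ swaps them, which immediately gives $\iota(\overline{e}) = \psi(f_2) - \psi(f_1) = -\iota(e)$, establishing the antisymmetry half of the current-function axioms with no effort.

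For the first claim, that $\iota$ satisfies Kirchhoff's Current Law at each interior vertex $v$, I would sum $\iota(e)$ over the edges $e$ leaving $v$, taken in cyclic order around $v$ as dictated by the embedding. The faces incident to $v$ also occur in a cyclic sequence $g_1, g_2, \ldots, g_k, g_1$, where consecutive faces are separated by consecutive edges. Each edge leaving $v$ contributes $\psi(g_{i}) - \psi(g_{i+1})$ (a difference of its left and right faces), so the total telescopes around the cycle to zero. The key point is that since $v$ is an \emph{interior} vertex, the faces genuinely close up into a full cycle around $v$, so every $\psi(g_i)$ is both added once and subtracted once; this is exactly where interiority is used. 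For the second claim about boundary vertices, the same telescoping argument applies, except that now $v$ lies on $\partial D$, so the cyclic sequence of surrounding faces is \emph{not} closed: it runs from the face $f_1$ on one side of $v$ to the face $f_2$ on the other, with the exterior region interrupting the cycle. The telescoping sum then collapses to the two endpoint terms $\psi(f_1) - \psi(f_2)$, which is precisely the asserted boundary current. I expect the bookkeeping of left-versus-right orientation to be the one genuinely delicate point, and I would pin it down with a small figure (as the paper already references).

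For the third claim, I would argue that the associated-current map is linear in $\psi$, so it suffices to show its kernel consists of the constant covoltage functions. If $\iota \equiv 0$, then $\psi(f_1) = \psi(f_2)$ whenever two faces $f_1, f_2$ share an edge; since the dual graph (faces as vertices, edges of $\Gamma$ as dual edges) is connected for a connected planar embedding, $\psi$ must be globally constant. For the fourth and final claim, that every current function arises from some covoltage, I would reverse the construction: given a current function $\iota$, define $\psi$ by fixing its value on one chosen face and setting $\psi(f') = \psi(f) + \iota(e)$ whenever crossing an edge $e$ from its right face $f$ to its left face $f'$. The obstacle here is well-definedness: this recipe is path-independent in the dual graph if and only if the signed sum of $\iota$ around every dual cycle vanishes, and dual cycles correspond precisely to the vertex-stars of $\Gamma$, around which the prescribed sum is exactly KCL at the enclosed vertex. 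Since $\iota$ is a current function, KCL holds at every interior vertex, so every dual cycle gives a vanishing sum and $\psi$ is well defined. The main subtlety I anticipate is handling the unbounded (outer) face and boundary vertices correctly so that the dual cycles one needs really are the interior-vertex stars; I would address this by noting that contracting all of $\partial D$ to the single outer face turns dual cycles into exactly the interior-vertex loops, which is the content needed.
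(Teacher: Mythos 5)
Your proof is correct, and it fills in details the paper deliberately omits: the paper merely remarks that only the fourth item is nontrivial, that it ``follows from well-known facts about planar graph duality,'' and leaves it as an exercise. Your argument is exactly the standard duality argument being gestured at, worked out in full: the telescoping sum over the cyclic fan of faces around a vertex gives KCL at interior vertices (closed fan) and the boundary-current formula $\psi(f_1)-\psi(f_2)$ at boundary vertices (fan interrupted by $\partial D$); uniqueness up to a constant follows from connectivity of the face-adjacency graph; and existence is path-integration in the dual, with well-definedness reduced to the vanishing of $\iota$ around dual cycles, which you correctly identify with interior-vertex stars. Two small refinements are worth noting. First, in item 3 you invoke connectedness of $\Gamma$, but no such hypothesis is needed: for an embedding in a disk, the face-adjacency graph is always connected (join any two faces by an arc in $D$ avoiding vertices and crossing edges transversally), even when $\Gamma$ itself is disconnected. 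Second, your closing remark about contracting $\partial D$ to an outer face is loose, but the point it is reaching for can be made cleanly without any contraction: a dual cycle is realized by a closed curve in the open disk, and since $\partial D$ is connected and disjoint from that curve, the enclosed region contains no boundary vertices; hence every dual cycle encloses only interior vertices, and its crossing sum is the sum of the interior KCL identities. A dimension count confirms your generation claim: the dual has $F$ vertices and $E$ edges and is connected, and Euler's formula for the disk gives $V-E+F=1+n$ with $n$ boundary vertices, so the cycle space has dimension $E-F+1=V-n$, the number of interior vertices, matching the independent star cycles. So your route buys a self-contained verification, at the usual cost of orientation bookkeeping, which you rightly flag as the only delicate spot.
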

Only the last of these is nontrivial.  It follows from well-known facts about planar graph duality, and we leave its proof as an exercise to the reader.
\begin{figure}
\centering
\def \svgwidth{4in}
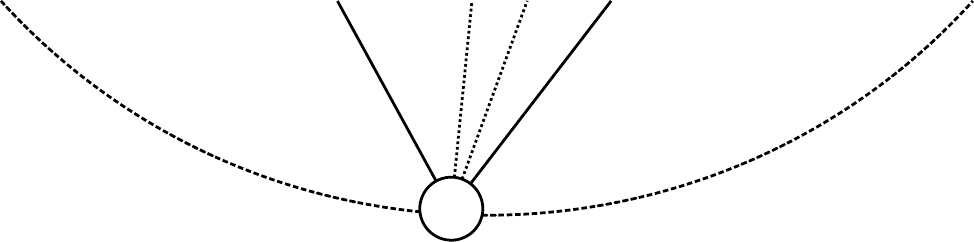
\caption{A boundary vertex $v$ and the two faces $f_1$ and $f_2$ on either side.}
\label{fig11-boundary-covoltage}
\end{figure}

From this lemma we see that covoltage functions and current functions are essentially equivalent.  We will generally
identify them, saying that a voltage function $\phi$ is compatible with a covoltage function $\psi$ if
$\phi$ is compatible with the associated current function of $\psi$.

More explicitly, a voltage function $\phi$ is compatible with a covoltage function $\psi$ if at every edge $e$ from vertex $v_1$
to $v_2$, with face $f_1$ on the left and $f_2$ on the right,
\[ \phi(v_1) - \phi(v_2) = \gamma_e(\psi(f_1) - \psi(f_2)).\]

From the lemma, we also see that boundary covoltages determine boundary currents, and
conversely boundary currents determine boundary covoltages up to an additive constant.  Consequently, the Neumann-Dirichlet
relation contains the same information as the \emph{voltage-covoltage relation} $\Xi$ which can be defined by
\[ \Xi = \{(\phi|_{\partial V},\psi|_{\partial F}):\textrm{ $\phi$ is a voltage function and $\psi$ is a compatible covoltage function}\}\]
where $\partial V$ and $\partial F$ are the sets of vertices and faces of $\Gamma$ along the boundary.

The relation between $\Xi$ and $\Lambda$ can be described explicitly as follows: let $v_1, f_1, v_2, f_2, \ldots, v_n, f_n$ be the boundary vertices
and faces of $\Gamma$, in counterclockwise order around the boundary of $\Gamma$.  Then we have
\[ (g,h) \in \Xi \iff (g(v_1),g(v_2),\ldots,g(v_n),h(f_n)-h(f_1),h(f_1)-h(f_2),\ldots,h(f_{n-1})-h(f_n)) \in \Lambda,\]
and conversely, $(x_1,x_2,\ldots,x_n,y_1,y_2,\ldots,y_n) \in \Lambda$ if and only if $y_1 + y_2 + \cdots + y_n = 0$ and $(g,h) \in \Xi$ where
$g(v_i) = x_i$ and $h$ is any function on the set of boundary faces with $h(f_n) - h(f_1) = y_1$, $h(f_1) - h(f_2) = y_2$, and so on.  Such
a function $h$ exists because $y_1 + y_2 + \cdots + y_n = 0$, and the choice of $h$ doesn't matter because $\Xi$ has the property
that $(g,h) \in \Xi$ if and only if $(g,h+\kappa) \in Xi$.

Because the voltage-covoltage relation contains the same information as the Neumann-Dirichlet relation, we can replace the Neumann-Dirichlet relation
with the voltage-covoltage relation in the definition of weak and strong recoverability.

\section{Medial Graphs}\label{sec:medial}
A graph's recoverability can be determined by considering an associated \emph{medial graph}:
\begin{definition}
A \emph{medial graph} is a piecewise-smooth Jordan curve $C \subseteq \mathbb{R}^2$, called the \emph{boundary curve}
and a collection of piecewise-smooth curves (\emph{geodesics}) in the inside of $C$, each of which is either closed, or has both endpoints on $C$, satisfying
the following properties:
\begin{itemize}
\item Every intersection of a geodesic with $C$ is transversal.
\item Every intersection between two geodesics or between a geodesic and itself is transversal.
\item There
are no triple intersection points of geodesics or the boundary curve.
\end{itemize}
\end{definition}
The geodesics partition the inside of the boundary curve into connected components called \emph{cells}.
Those along the boundary curve are called the \emph{boundary cells}.  The
\emph{interior vertices} of a medial graph are all the points of intersection between geodesics (including self-intersections).  The \emph{boundary vertices}
are the intersections of the geodesics with the boundary curve.

We draw the boundary curve with a dashed line.  The medial graph of Figure~\ref{fig2-simpler-medial} has three geodesics,
eleven cells, eight interior vertices, and four boundary vertices.
\begin{figure}
\centering
\def \svgwidth{2in}
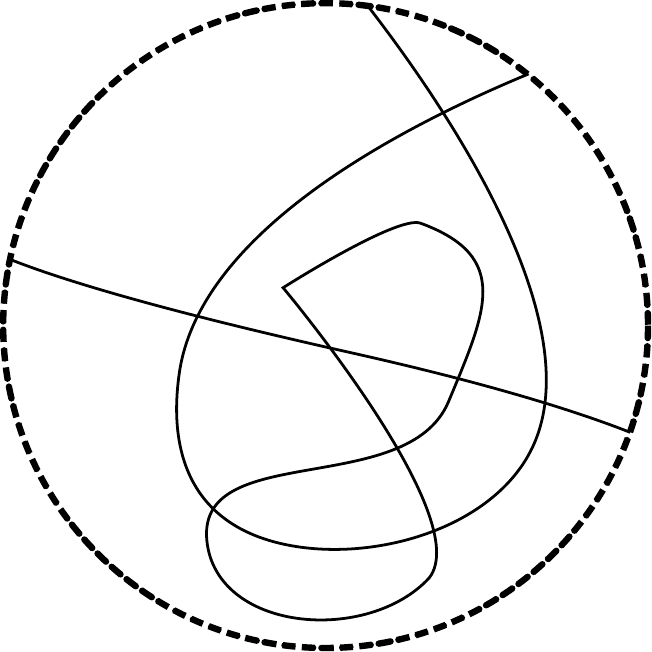
\caption{A sample medial graph.  This one is rather degenerate; one of the geodesics closes back up on itself.}
\label{fig2-simpler-medial}
\end{figure}

To any circular planar graph $\Gamma$ (with a fixed embedding in a disk $D$), we can construct a medial graph as follows: place a vertex along each
edge of $\Gamma$, and connect two vertices if they are on consecutive edges around one of the faces of $\Gamma$.  Also place two vertices along each
arc of $\partial D$ between consecutive boundary nodes of $\Gamma$, and connect these to the vertices on the closest edges.  Figure~\ref{fig3-cp-to-medial-non-inkscape}
shows this whole process.  The resulting
vertices and edges can then be decomposed as a union of geodesics in a unique way.  The result is a medial graph with at least $|\partial \Gamma|$ geodesics,
and boundary curve $\partial D$.
\begin{figure}
\centering
\def \svgwidth{5in}
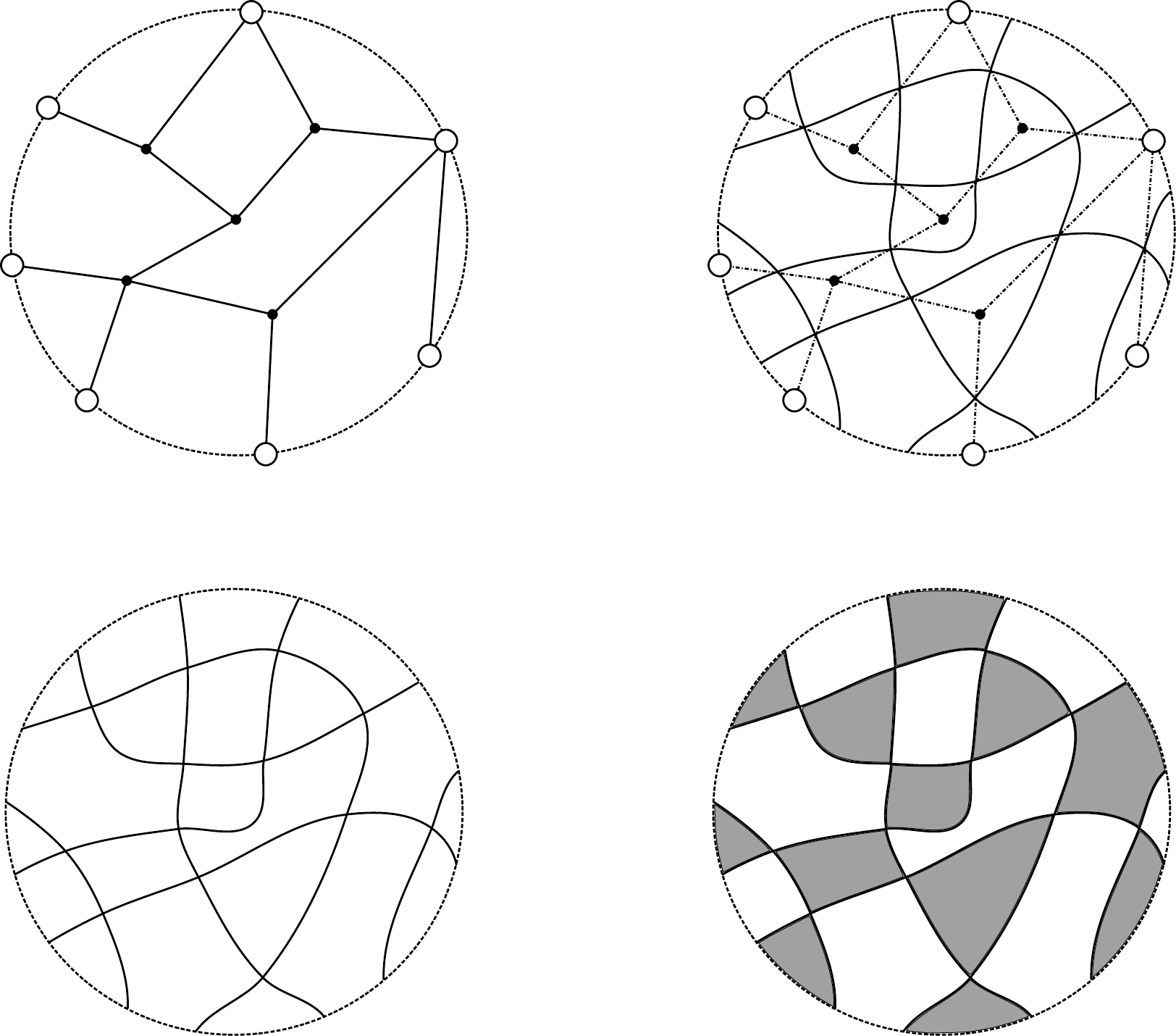
\caption{The process of converting a circular planar graph (top left) into a medial graph (bottom left).  The natural coloring of this medial graph
is shown on the bottom right.}
\label{fig3-cp-to-medial-non-inkscape}
\end{figure}

\begin{definition}
A \emph{coloring} of a medial graph is a function from the cells to the set $\{black, white\}$ such that directly adjacent cells have opposite colors.
Here we say that two cells are directly adjacent if they share an edge in common.
\end{definition}
Every medial graph has exactly two colorings.  A medial graph that comes from a circular planar graph $\Gamma$ has a canonical coloring, with the cells that
come from vertices colored black, and the cells that come from faces colored white.  The process of converting a circular planar graph into a colored medial graph
is almost reversible.  In particular, a circular planar graph $\Gamma$ is determined (up to isotopy) by its associated
colored medial graph.  However, some degenerate medial graphs, like those in Figure~\ref{fig4-impossible-medial-graphs}, do not come from circular planar graphs.
\begin{figure}
\centering
\def \svgwidth{4.5in}
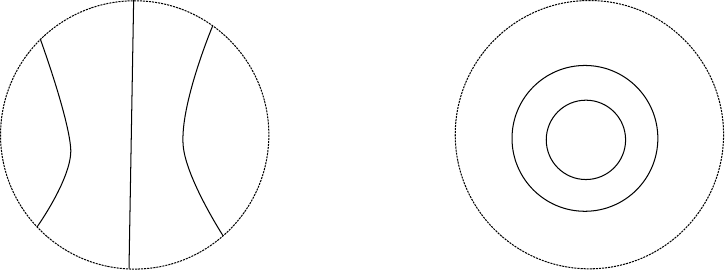
\caption{Medial graphs that cannot come from circular planar graphs (regardless of which way they are colored).}
\label{fig4-impossible-medial-graphs}
\end{figure}

Colored medial graphs therefore generalize circular planar graphs, and we can generalize the inverse boundary value problems to colored medial graphs in the following
way.
If $M$ is the medial graph of $\Gamma$, then the black cells of $M$ correspond to the vertices of $\Gamma$, while
the white cells of $M$ correspond to faces of $\Gamma$.  So a voltage function is equivalent to a function on the black
cells of $M$, while a covoltage function is equivalent to a function on the white cells of $M$.  The compatibility
condition then says that if $w$, $x$, $y$, $z$ are four cells that meet at an interior vertex corresponding to
a directed edge $e$, then
\[ \psi(z) - \psi(x) = \gamma_e(\phi(w) - \phi(y))\]
if $w$, $x$, $y$, $z$ are in counterclockwise order around $e$, and $w$ is the cell at the start of $e$, as in Figure~\ref{fig12-fourcell-setup}.
From the medial graph point of view, we might as well combine $\psi$ and $\phi$ into one function.
\begin{figure}
\centering
\def \svgwidth{4in}
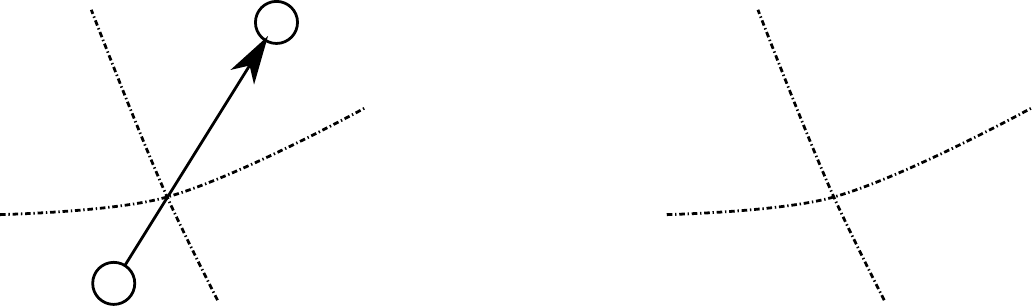
\caption{The four medial-graph cells surrounding an edge of the original graph.}
\label{fig12-fourcell-setup}
\end{figure}

\begin{remark}
To avoid a definitional nightmare, we will henceforth pretend that conductance functions $\gamma_e$ are always odd.
Equivalently, we pretend that $\gamma_e = \gamma_{\overline{e}}$.  None of the following results will use this assumption, but they are much harder
to state correctly without it!  For example, in the next definition, we would need to define something like an ``oriented interior vertex''
which is tricky to define correctly in the case where geodesics loop back on themselves.
\end{remark}

\begin{definition}
Let $M$ be a colored medial graph.  A \emph{conductance structure} on $M$ assigns to every interior vertex $v$ an odd bijection $\gamma_v : \mathbb{R} \to \mathbb{R}$.
A conductance structure is \emph{monotone} if every $\gamma_v$ is order-preserving, and \emph{linear} if every $\gamma_v$ is linear.
\end{definition}
When $M$ comes from a circular planar graph $\Gamma$, a conductance structure on $M$ is equivalent to a bijective conductance structure on $\Gamma$,
and the conditions of monotonicity and linearity are equivalent for $M$ and for $\Gamma$.

\begin{definition}
Let $M$ be a colored medial graph and $\gamma$ be a conductance structure on $M$.  A \emph{labelling} of $(M,\gamma)$ is a function $\phi$ from
the cells of $M$ to $\mathbb{R}$ such that for every interior vertex $v$ in $M$,
\begin{equation} \phi(z) - \phi(x) = \gamma_v(\phi(w) - \phi(y))\label{consistency-equation}\end{equation}
where $w$, $x$, $y$, $z$ are the cells in counterclockwise order around $v$, with $w$ and $y$ colored black.  The \emph{boundary
data} of a labelling $\phi$ is the restriction $\phi|_\partial$ of $\phi$ to the boundary cells.
The \emph{boundary relation} $\Xi_\gamma$ of $\gamma$ is the set
\[ \Xi_\gamma = \{\phi|_\partial ~:~ \phi\text{ is a labelling of } (M,\gamma)\}\]
of all
possible boundary data of labellings of $(M,\gamma)$.  A colored medial graph $M$ is \emph{strongly recoverable} if the function $\gamma \mapsto \Xi_\gamma$
is injective, and $M$ is \emph{weakly recoverable} if it is injective when restricted to monotone linear $\gamma$.
\end{definition}
Note that strong recoverability implies weak recoverability.
In the case where $M$ comes from a circular planar graph, it is not difficult to see that $\Xi_\gamma$ contains the same information as the voltage-covoltage
relation of $(\Gamma,\gamma)$, and in particular, $\Gamma$ is weakly (strongly) recoverable if and only if $M$ is.

If $M'$ is the colored medial graph obtained by reversing the colors of $M$, then $M$ is weakly (resp. strongly) recoverable if and only if $M'$ is weakly (resp. strongly)
recoverable.  In particular, recoverability
of a medial graph doesn't depend on the choice of the coloring.
We won't use this fact in what follows, so we leave the proof to the reader.  

\begin{definition}
A medial graph
is \emph{semicritical} if the following hold:
\begin{itemize}
\item No geodesic intersects itself
\item If $g_1$ and $g_2$ are geodesics, then $g_1$ and $g_2$ intersect at most once.
\end{itemize}
If in addition, every geodesic starts and ends on the boundary curve (rather than being a closed loop), we say that the medial graph is \emph{critical}.
\end{definition}
Critical medial graphs are essentially equivalent to \emph{simple pseudoline arrangements}.

The main result we are working towards is the following:
\begin{theorem}\label{main-result}
Let $M$ be a medial graph.  Then the following are equivalent:
\begin{itemize}
\item $M$ is strongly recoverable.
\item $M$ is weakly recoverable.
\item $M$ is semicritical.
\end{itemize}
In particular, this gives a criterion for determining whether a circular planar graph is weakly or strongly recoverable.
\end{theorem}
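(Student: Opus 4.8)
The plan is to prove the three-way equivalence by establishing a cycle of implications. Strong recoverability trivially implies weak recoverability, since monotone linear conductance structures are a special case of bijective ones and this is already noted in the excerpt. So it remains to show that weak recoverability implies semicriticality, and that semicriticality implies strong recoverability. The first is the ``necessity'' direction and amounts to exhibiting the classical obstruction; the second is the ``sufficiency'' direction and carries essentially all of the new content, so that is where I expect the real difficulty to lie.

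For weak $\Rightarrow$ semicritical I would argue the contrapositive. Suppose $M$ fails to be semicritical, so either some geodesic crosses itself or two geodesics cross at least twice. Passing to an innermost offending pair of crossings, I would isolate an \emph{empty lens}: a region bounded by two geodesic arcs meeting at two interior vertices $v_1, v_2$ with no further crossing inside. Translated into the language of labellings and the consistency equation \ref{consistency-equation}, such a lens produces a one-parameter ambiguity, and I would exhibit two distinct monotone linear conductance structures $\gamma$ and $\gamma'$, differing only at the vertices of the lens (rescaling the two conductances against one another in a series/parallel fashion), for which $\Xi_\gamma = \Xi_{\gamma'}$. Since only monotone linear structures must be defeated, this is a finite-dimensional linear computation closely following the obstruction in \cite{CIM} and \cite{ReseauxElectriques}; the only care needed is to phrase it directly on the medial graph, so that genuine self-intersections and double crossings are handled uniformly.

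The heart of the theorem is semicritical $\Rightarrow$ strong, which I would prove by induction on the number of interior vertices via layer stripping. First I would reduce to the critical case: in a semicritical graph a closed geodesic must, by a parity argument, be disjoint from every other geodesic and free of self-intersections, so it carries no crossings and contributes nothing to recover, and such loops can be set aside. In the critical (simple pseudoline) case the base case of no crossings is vacuous. For the inductive step I would locate a crossing $v$ adjacent to the boundary --- where two geodesics $g_1, g_2$ meet with no further crossing between $v$ and a boundary arc --- which always exists in a nonempty simple pseudoline arrangement. The key step is to recover the single bijection $\gamma_v$ from $\Xi_\gamma$ alone: I would design a boundary-value problem prescribing the labelling on a chosen set of boundary cells, and then invoke the convexity results of \S\ref{sec:convexity} to guarantee that these values propagate \emph{deterministically} across a growing convex region of cells up to the four cells $w, x, y, z$ surrounding $v$. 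Once $\phi(w), \phi(x), \phi(y)$ are forced by boundary data and $\phi(z) - \phi(x)$ can be read from the boundary relation, the equation $\phi(z) - \phi(x) = \gamma_v(\phi(w) - \phi(y))$ yields one value of $\gamma_v$, and sweeping the boundary data traces out all of $\gamma_v$. Having determined $\gamma_v$, I would uncross $g_1$ and $g_2$ at $v$ to obtain a strictly smaller semicritical medial graph $M'$, show that $\Xi_\gamma$ together with the now-known $\gamma_v$ determines $\Xi_{\gamma'}$, and close the induction.

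The main obstacle is exactly the recovery of $\gamma_v$ in the fully general bijective setting. Because the conductance functions are only assumed to be odd bijections --- neither monotone nor continuous --- the object $\Xi_\gamma$ is a genuine relation and labellings need not be unique, so I cannot simply solve a Dirichlet or Neumann problem (Theorems \ref{dirichlet-problem} and \ref{neumann-problem}) and invert it. What rescues the argument is that the specially engineered boundary-value problems force unique propagation in spite of this, and the sole purpose of the convexity machinery of \S\ref{sec:convexity} is to certify that in a simple pseudoline arrangement the region of determined cells grows in a controlled convex manner and reaches $v$ with no choice and no contradiction. Verifying that this propagation is simultaneously well-defined and rich enough to sweep out the entire graph of $\gamma_v$, and that the uncrossing genuinely preserves semicriticality while strictly decreasing the vertex count, is where nearly all of the work resides.
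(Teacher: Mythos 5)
Your sufficiency direction (semicritical $\Rightarrow$ strong) is essentially the paper's argument: after reducing to the critical case exactly as in Theorem~\ref{kind-of-obvious} (your parity argument for closed geodesics is the same one), the paper recovers the apex conductance of a boundary triangle by an engineered boundary-value problem whose well-posedness is certified by the safe-set/convexity machinery of \S\ref{sec:convexity} (Lemma~\ref{problem-set-up} together with Theorem~\ref{safe-sets-work}), then uncrosses the triangle and inducts (Theorem~\ref{conditional}); this matches your plan point for point, including reading the fourth cell's value off the boundary relation. One technical omission there: your ``crossing adjacent to the boundary'' need not bound a single cell, since a crossing-free geodesic can pass between $v$ and the boundary arc; Theorem~\ref{boundary-triangles-exist} only guarantees a boundary digon \emph{or} boundary triangles, so you must also dispose of boundary digons (as in Corollary~\ref{transformations-and-recoverability}) before the induction step goes through. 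That is minor and repairable.

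The genuine gap is in your necessity direction (weak $\Rightarrow$ semicritical). An innermost offending pair of crossings does \emph{not} isolate an empty lens. A minimal lens can be traversed by arbitrarily many further geodesics, each entering through one bounding arc and exiting through the other, crossing each arc once and each other at most once: this creates interior vertices inside the lens while producing no smaller lens, so there may be no empty lens anywhere in $M$, and your series/parallel rescaling of two conductances has nothing to act on. What the paper actually proves is first Theorem~\ref{lenses-exist} (a non-semicritical $M$ contains a \emph{sharp}, not empty, lens) and then Theorem~\ref{emptying-lenses}: the inside of a minimal sharp lens is itself a critical medial graph, Theorem~\ref{boundary-triangles-exist} supplies boundary triangles of that sub-graph avoiding the poles, and Y-$\Delta$ motions --- which preserve weak recoverability because the Y-$\Delta$ map is a bijection on positive linear conductances --- push crossings out of the lens one at a time until it is empty; only then does the two-parameter series/parallel family defeat weak recoverability. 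This emptying process is a substantial inductive argument, not the ``finite-dimensional linear computation'' your proposal describes, and without it the contrapositive does not close.
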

The equivalence of weak recoverability and semicriticality for medial graphs coming from circular planar graphs was shown in \cite{CIM} and \cite{ReseauxElectriques} by arguments involving
linear algebra and determinantal identities.  It therefore comes as a surprise that this remains true in the highly nonlinear cases considered here.
It is also interesting that for circular planar graphs, weak and strong
recoverability are equivalent.  It is unclear whether this remains true for non-planar graphs.

\section{Lenses, Boundary Triangles, and Motions}\label{sec:linear}
In this section, we review facts about medial graphs from \cite{CandM}, and show why a weakly recoverable medial graph is semicritical.
Almost all of the proofs of this section are based on those in \cite{CandM}.  We use unusual terminology, however.

\begin{definition}
A \emph{boundary digon} or \emph{boundary triangle} is a simply-connected boundary cell with two or three sides, respectively.  If $g$ is a geodesic which
does not intersect the boundary curve or any other geodesics and which is a simple closed curve, then we call the set of cells inside $g$ a \emph{circle}.
If there is just one cell, we call it an \emph{empty circle}.
\end{definition}

\begin{definition}
If $M$ is a critical medial graph and $g$ is a geodesic, then since $M$ is critical, $g$ divides $M$ into two connected components, by the Jordan curve theorem.
We call such a connected component a \emph{side}.  If $g$ and $h$ are two geodesics, then $g \cup h$ divides $M$ into four pieces, which we call \emph{wedges}.
The boundary of any wedge consists of a segment of the boundary curve, which we call the \emph{base}, and two segments of $g$ and $h$, which we call the \emph{legs}.
We call the intersection of $g$ and $h$ the \emph{apex} of a wedge.  We say that a wedge $W$ is a \emph{semiboundary wedge} if at least one of the two legs contains no
interior vertices other than the apex.  A \emph{boundary zone} is one of the following:
\begin{itemize}
\item A semiboundary wedge
\item The side of a geodesic which intersects no other geodesics. 
\end{itemize}
\end{definition}

\begin{theorem}\label{boundary-triangles-exist}
Let $M$ be a critical medial graph.  If $M$ has more than one cell, then $M$ has at least one boundary digon, or at least three boundary triangles.
\end{theorem}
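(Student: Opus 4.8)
The plan is to combine a global Euler-characteristic count, which forces the existence of cells with few sides, with the boundary-zone machinery just introduced, which is what pins those small cells against the boundary curve. First I would regard the geodesics together with the boundary curve $C$ as a planar subdivision of the disk and count its vertices, edges, and faces. Writing $n$ for the number of geodesics and $c$ for the number of interior vertices, and using that $M$ is critical (so every geodesic runs from $C$ to $C$ and is cut into one more edge than it has crossings, and there are no closed geodesics), one gets $2n$ boundary vertices, $V = c+2n$, $E = 2c+3n$, and hence $c+n+1$ cells. Counting edge–face incidences, with the outer face bounded by the $2n$ boundary arcs, gives $\sum_{\text{cells}} s(F) = 4c+4n$, where $s(F)$ is the number of sides of the cell $F$, and therefore the budget identity
\[ \sum_{\text{cells}} \bigl(4 - s(F)\bigr) = 4. \]
Criticality forbids lenses, so no cell is bounded by two geodesic edges alone; thus every interior cell has $s(F)\ge 3$ and every boundary cell has $s(F)\ge 2$, with equality exactly for a boundary digon. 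Since $M$ has more than one cell, this already forces cells of few sides to occur.

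The difficulty is that this budget of $4$ cannot by itself separate boundary triangles from interior ones: an interior triangle also contributes $+1$, and because pseudoline arrangements need not be stretchable there is no straight-line realization in which combinatorial curvature concentrates on $C$. Localizing to the boundary is therefore where the notions of \emph{boundary zone} and \emph{semiboundary wedge} must do the work. The key lemma I would prove is that a boundary zone that is minimal with respect to inclusion consists of a single cell, and that this cell is a boundary digon or a boundary triangle. For a minimal semiboundary wedge $W$ with apex $v$ and empty leg along $g$ (the edge from $v$ to $C$ carrying no other interior vertex), I would argue that any crossing strictly inside $W$ lies on a geodesic which, being unable to meet the empty leg, must enter and leave $W$ through the base or the other leg; tracing that geodesic produces a strictly smaller semiboundary wedge, contradicting minimality. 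Hence $W$ has no interior crossing, so it is a single cell, and reading off its bounding arc and two legs shows it is a boundary triangle (a digon in the degenerate case). A minimal boundary zone of the second kind, the side of a geodesic meeting no other geodesic, is handled by the same shrinking argument and yields a boundary digon.

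To upgrade ``a boundary digon or triangle exists'' to ``a boundary digon, or at least three boundary triangles,'' I would assume no digon occurs and run an extremal argument around the full circle $C$. With no digon, every geodesic meets another, so all minimal boundary zones are semiboundary wedges, each of which (by the lemma) is a boundary triangle seated against a single boundary arc; the aim is to exhibit three of them against three disjoint portions of $C$. The route I would take is to excise one such boundary triangle together with its base arc, check that the remaining configuration is still critical with more than one cell and still free of digons, and re-apply the existence lemma, using the budget identity $\sum(4-s(F)) = 4$ to guarantee that the positive contributions cannot collapse to fewer than three distinct boundary cells once digons and lenses are excluded.

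I expect this final counting step to be the main obstacle. Euler's formula supplies the total budget of $4$ but is blind to whether the positive contributions sit at interior or boundary triangles, and the genuine reason the count reaches exactly three is topological: it reflects the cyclic structure of the boundary cells around $C$ rather than any local inequality. The delicate points will be verifying that excising a boundary triangle does not create a lens or a self-intersection (so that criticality and the budget identity persist), and confirming that the three zones one extracts are genuinely distinct cells rather than the same triangle counted at different arcs.
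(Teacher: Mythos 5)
Your key lemma---that an inclusion-minimal boundary zone contains no crossings, hence is a single cell, necessarily a boundary digon or boundary triangle---is exactly the lemma the paper proves, by essentially the same shrinking argument (a geodesic entering a minimal semiboundary wedge would create a smaller one). Your Euler-characteristic budget $\sum_F \bigl(4 - s(F)\bigr) = 4$ is also correct, and you correctly diagnose that it cannot distinguish boundary triangles from interior ones. The genuine gap is the final step, which you yourself flag as the main obstacle: the excision route fails. The only way to ``excise'' a boundary triangle and remain in the class of medial graphs is to uncross its apex (literally deleting the cell together with its base arc would leave a geodesic crossing sitting on the new boundary curve, which is forbidden); uncrossing preserves criticality, but \emph{not} digon-freeness. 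Concretely, let $M$ consist of two geodesics crossing once: $M$ has four boundary triangles and no digons, yet uncrossing one triangle yields two disjoint geodesics whose three cells are two boundary digons and a quadrilateral. So ``still free of digons'' already fails at the first step; moreover, a boundary triangle of the surgered graph need not correspond to any triangle of $M$, since the cells adjacent to the removed apex change their side counts, so re-applying the existence lemma to $M'$ says nothing about $M$. The budget identity cannot repair this, for the reason you noted: interior and boundary triangles contribute to it identically.

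The paper's completion is direct and needs no surgery. A boundary triangle $abc$ with apex $a$ is itself one of the four wedges cut out by the two geodesics $g$ and $h$ crossing at $a$, and its legs carry no interior vertices other than $a$. The two wedges adjacent to it each share one of those legs, so each is by definition a semiboundary wedge, i.e.\ a boundary zone; your own minimality lemma then plants a boundary digon or boundary triangle inside each. Since the wedges are disjoint regions, these cells are distinct from each other and from $abc$, so in the absence of any digon you obtain three distinct boundary triangles. In short: you proved the right key lemma, but instead of inducting on a modified graph you should simply apply that lemma twice more, to the two wedges flanking the triangle you already found.
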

\begin{proof}
We first prove the following:
\begin{lemma}
Every boundary zone contains a boundary triangle or a boundary digon.
\end{lemma}
\begin{proof}
Let $Z$ be a boundary zone.  Without loss of generality, $Z$ contains no smaller boundary zones.  I claim that no geodesic crosses the boundary of $Z$.
This is trivial if $Z$ is the side of a geodesic which intersects
no other geodesics.  In the other case, $Z$ is a semiboundary wedge.  Say $Z$ has apex $a$ and legs $ab$ and $ac$.  Without loss of generality, no geodesics
cross $ab$, and it remains to show that no geodesics cross $ac$.  Suppose a geodesic $g$ crosses $ac$ at an interior vertex $d$.  Without loss of generality, $d$ is
as close to $c$ as possible, and in particular, there are no interior vertices between $c$ and $d$.  After $g$ enters $Z$ at $d$, it must do one of three things:
\begin{itemize}
\item Exit $Z$ by passing through $ab$.
\item Exit $Z$ by passing through $ac$.
\item End on the boundary curve.
\end{itemize}
The first of these is impossible because there are no interior vertices on $ab$.  The second is impossible because it would mean that $g$ crosses $ac$ in two places.
The third is impossible because it would mean that there was a smaller semiboundary wedge inside $Z$, as in Figure~\ref{focusing-wedge}, contradicting the choice of $Z$.

So no geodesic enters or exits $Z$.  I claim that there are in fact no geodesics in $Z$.  Let $g$ be any geodesic in $Z$.  If $g$ intersects no other geodesics, then
one side of $g$ is a smaller boundary zone inside $Z$, a contradiction.  Otherwise, there is at least one interior vertex along $g$.  Let $e$ be one which is as close
to the boundary curve as possible, as in Figure~\ref{smaller-wedges}.  Then $e$ is the apex of at least two semiboundary wedges contained inside $Z$, a contradiction.

Therefore, $Z$ contains no geodesics in its interior. It follows that $Z$ must be a single cell.  Therefore $Z$ is either a boundary triangle or a boundary digon.
\end{proof}
We next, show the existence of at least one boundary triangle or boundary digon.  By the lemma, it suffices to find at least one boundary zone.
Since $M$ has more than one cell, it has more than zero geodesics.  Let $g$ be a geodesic.  If $g$ intersects no other geodesics, then both sides of $g$
are boundary zones, so we have at least one boundary digon or boundary triangle.  Otherwise, let $v$ be a boundary vertex of $g$ which is as close as possible
to one of the endpoints of $g$.  If $h$ crosses $g$ at $v$, then half the wedges formed by $g$ and $h$ are semiboundary wedges, so at least one boundary digon
or boundary triangle exists.

If a boundary digon exists, then we are done.  Otherwise, we have a boundary triangle.  This boundary triangle is in fact a wedge formed by two geodesics
$g$ and $h$.  Two of the other wedges formed by $g$ and $h$ are semiboundary wedges, so this gives two more boundary triangles or boundary digons.  This completes
the proof of Theorem~\ref{boundary-triangles-exist}.
\end{proof}
\begin{figure}
\centering
\def \svgwidth{4in}
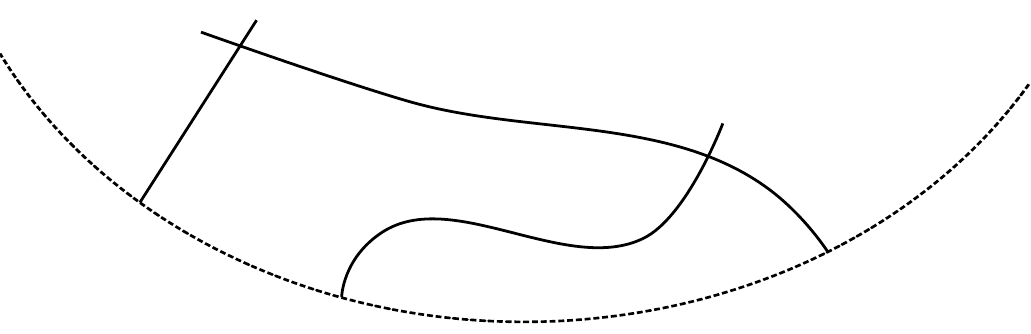
\caption{If there are any interior vertices along leg $ac$, then $abc$ is not a minimal semiboundary wedge.}
\label{focusing-wedge}
\end{figure}
\begin{figure}
\centering
\def \svgwidth{4in}
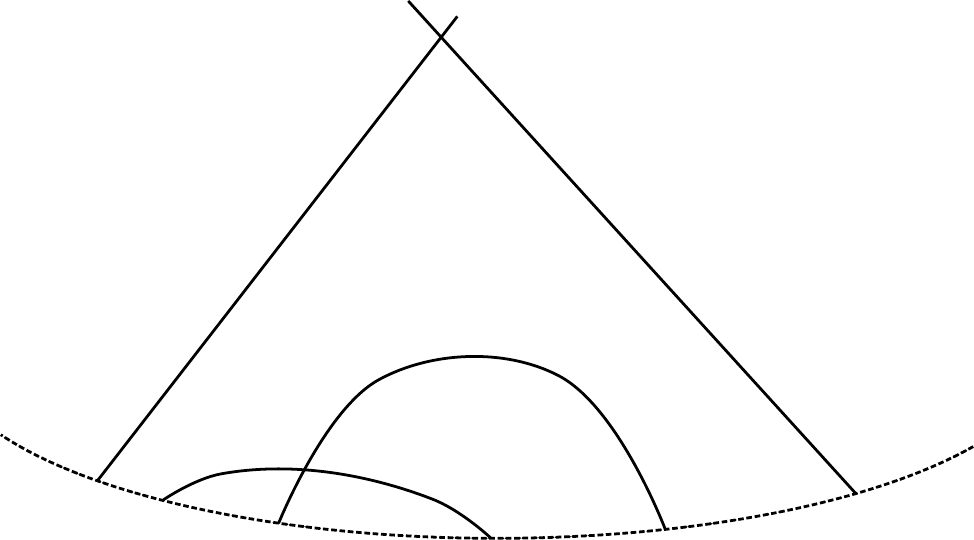
\caption{If any interior vertex $e$ exists along $g$, then $e$ is the apex of at least two semiboundary wedges inside $abc$.}
\label{smaller-wedges}
\end{figure}

\begin{definition}
A medial graph is \emph{circleless} if it has no empty circles.
\end{definition}

\begin{theorem}\label{kind-of-obvious}
A semicritical circleless graph $M$ is critical.
\end{theorem}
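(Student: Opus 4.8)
The plan is to argue by contradiction: assuming $M$ is semicritical and circleless but not critical, there must be a geodesic that is a closed loop, and I will extract from it an empty circle, contradicting circlelessness. The heart of the argument is the observation that in a semicritical medial graph a closed geodesic cannot be crossed by anything, so it automatically bounds a circle in the sense defined earlier.

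First I would establish the key lemma: if $g$ is a closed geodesic in a semicritical medial graph, then $g$ intersects no other geodesic (and, being a closed loop, meets neither the boundary curve $C$ nor itself), so that $g$ is a circle. Since $M$ is semicritical, $g$ is a simple closed curve and meets any other geodesic $h$ at most once, transversally (using the medial-graph axioms of transversality and no triple points). By the Jordan curve theorem $g$ bounds a region $D_g$, which is disjoint from $C$ because $g$ lies strictly in the interior of $C$. If $h$ has both endpoints on $C$, those endpoints lie outside $D_g$, so $h$ must cross $g$ an even number of times; together with the ``at most once'' condition this forces zero crossings. If instead $h$ is itself a closed loop, traversing it returns to the starting side of $g$, again forcing an even, hence zero, number of crossings. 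Thus no geodesic crosses $g$, and $g$ is a circle.

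Next I would pass to an innermost closed loop. Among the finitely many closed geodesics, whose interiors are pairwise nested or disjoint since by the lemma they cannot cross one another, choose one, $g$, whose interior $D_g$ contains no other closed loop. I then claim $D_g$ is a single cell. If not, some geodesic $h \ne g$ subdivides $D_g$, i.e.\ passes through its interior. Since $h$ cannot cross $g$, and cannot reach $C$ without crossing $g$, it is trapped inside $D_g$; a trapped geodesic cannot have endpoints on the boundary curve, so it must itself be a closed loop lying in $D_g$, contradicting the minimality of $g$. Hence $D_g$ is a single cell, making $g$ an empty circle and contradicting circlelessness. Therefore no closed geodesic exists, every geodesic has both endpoints on $C$, and $M$ is critical.

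The main obstacle, really the only subtle point, is the parity step in the lemma: I must verify that both endpoints of a boundary geodesic genuinely lie outside $D_g$, and that transversality together with no triple intersection points guarantees each crossing toggles the inside/outside of $g$, so that ``at most one intersection'' upgrades to ``no intersection.'' Everything after that is routine bookkeeping about nested Jordan curves and the finiteness of the arrangement, which is what justifies selecting an innermost loop; this is why the statement is, as its label suggests, kind of obvious.
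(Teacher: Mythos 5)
Your proposal is correct and takes essentially the same route as the paper: both use transversality plus the Jordan curve theorem to show any other geodesic meets a closed geodesic an even number of times, so semicriticality's ``at most once'' forces zero crossings, and then pass to a minimal circle whose interior must be a single cell, contradicting circlelessness. The only cosmetic difference is that you choose an innermost closed loop via nesting, whereas the paper chooses a circle minimizing the number of cells inside; these are interchangeable.
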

\begin{proof}
Given the definition of critical and semicritical, it suffices to show that if $M$ is circleless, then it has no circles.
Let $C$ be a circle which minimizes the number of cells inside $C$.  Since all the intersections of geodesics are transversal, if $g$ is any geodesic,
then $g$ intersects (the boundary of) $C$ an even number of times.  This can only happen in a semicritical graph if $g$ intersects (the boundary of) $C$ exactly zero
times.  It follows that every geodesic in $M$ is either inside $C$ or outside $C$.  But every geodesic inside $C$ must itself be a circle, contradicting the choice of
$C$.  Therefore $C$ is an empty circle, and $M$ is not circleless.
\end{proof}

\begin{definition}
If $g$ is a geodesic, we can think of $g$ as either a continuous function on $[0,1]$ or a continuous function on $\mathbb{R}$ with period 1.
If $[a,b]$ is a subinterval of the domain of $g$, and $g$ is injective on both half-open intervals $(a,b]$ and $[a,b)$, we call the curve $g([a,b])$
a \emph{geodesic segment with endpoints $g(a)$ and $g(b)$}.
\end{definition}

\begin{definition}
Let $v$ be an interior vertex.  A \emph{1-lens with pole at $v$} is the interior of a Jordan curve that is a geodesic segment with endpoints $v$ and  $v$.
Let $v$ and $w$ be interior vertices.  A \emph{2-lens with poles at $v$ and $w$} is the interior of a Jordan curve that is a union of two geodesic segments,
each having endpoints at $v$ and $w$.  For $n = 1, 2$, we say that an $n$-lens is \emph{sharp} if for every pole $v$, only one of the four cells that meet at $v$ is
inside the $n$-lens.  We say that an $n$-lens is \emph{empty} if it contains only one cell.
\end{definition}
Because every medial graph can be colored, no cell can be on both sides of a geodesic.  From this it is easy to see that empty lenses must be sharp lenses.
See Figure~\ref{fig6-lenses} for examples of lenses.
\begin{figure}
\centering
\def \svgwidth{5in}
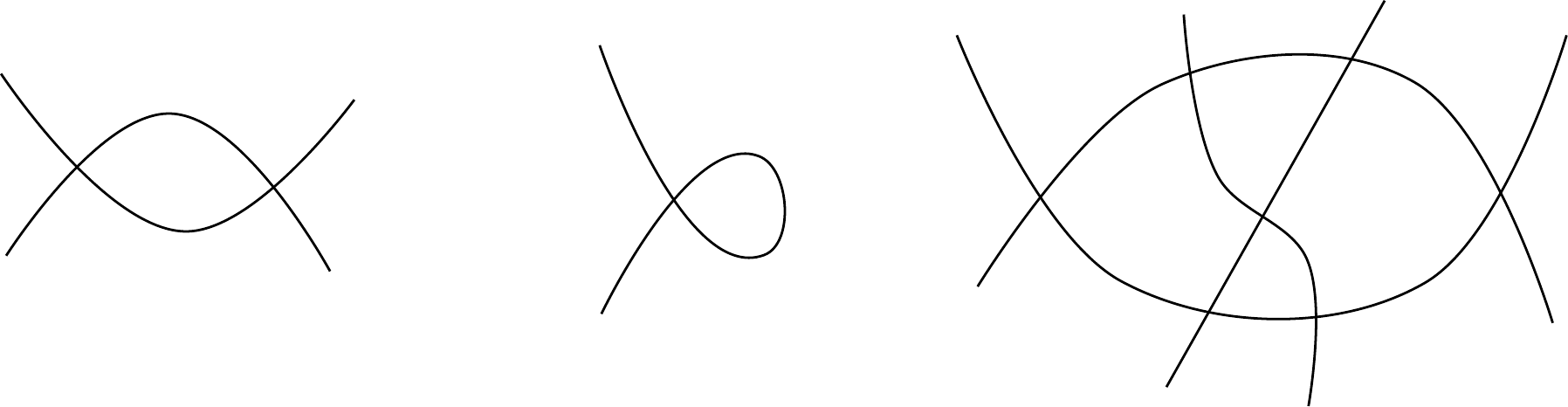
\caption{An empty 2-lens, an empty 1-lens, and a nonempty sharp 2-lens.}
\label{fig6-lenses}
\end{figure}

\begin{theorem}\label{lenses-exist}
If $M$ is a medial graph which is not semicritical, then $M$ must contain a sharp 1-lens or sharp 2-lens.
\end{theorem}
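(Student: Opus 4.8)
The plan is to show that the failure of semicriticality always produces at least one lens, and then to pass to a minimal lens and argue that minimality forces sharpness. First I would establish that $M$ contains \emph{some} lens. By definition, if $M$ is not semicritical then either some geodesic $g$ intersects itself, or two distinct geodesics $g,h$ meet in at least two points. In the first case I pick two parameter values at which $g$ returns to a common self-intersection point $v$ whose parameter gap is as small as possible; the arc of $g$ between them is a geodesic segment with both endpoints at $v$, and the minimality of the gap guarantees it is a simple closed curve, hence bounds a $1$-lens. In the second case I choose two crossing points $v,w$ of $g$ and $h$ that are consecutive along $g$, so that the two arcs of $g$ and $h$ between $v$ and $w$ form a Jordan curve bounding a $2$-lens. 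Either way a lens exists.

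Now among all $1$-lenses and $2$-lenses in $M$ I would choose one, call it $L$, whose open interior contains the fewest cells, and I claim this $L$ is sharp. The governing principle is that any geodesic meeting the open interior of $L$ cannot terminate there: its endpoints lie on the boundary curve $C$, which is exterior to $L$, and it cannot be a closed loop sitting strictly inside $L$ without meeting $\partial L$ (such a loop would itself bound a $1$-lens with strictly fewer cells inside). Hence every geodesic $k$ entering $L$ must cross $\partial L$, which consists of the one or two bounding geodesic arcs. I would then run a case analysis on such a $k$: if $k$ crosses a single bounding arc (lying on geodesic $g$, say) twice, then the relevant sub-arc of $k$ together with the intervening sub-arc of $g$ bounds a $2$-lens strictly inside $L$; if $k$ self-crosses inside $L$, the resulting loop bounds a $1$-lens strictly inside $L$; in each case we contradict minimality. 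The only surviving possibility is that $k$ crosses from one bounding arc to the other, i.e.\ is a \emph{chord}; chords produce only triangular regions rather than sub-lenses, and, crucially, since they cannot meet a pole (that would be a forbidden triple point), they leave the pole cells intact.

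It remains to rule out the one remaining way sharpness can fail, namely a \emph{reflex} corner at a pole, where the interior of $L$ occupies three of the four sectors meeting at that pole rather than one. For a minimal $L$ this cannot happen: if the corner at a pole $v$ were reflex, the continuation past $v$ of one of the bounding geodesics would head into the interior of $L$ and, being part of a geodesic, would have to cross the opposite bounding arc again, giving that pair of geodesics a third intersection and hence a $2$-lens (or, in the self-crossing case, a $1$-lens) strictly inside $L$, contradicting minimality. Since every corner of $L$ is therefore convex, exactly one of the four cells at each pole lies inside $L$---here using that no geodesic passes through a pole and that, by colorability, no cell straddles a bounding geodesic---so $L$ is sharp. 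The main obstacle will be the bookkeeping in this case analysis: making the ``crosses one arc twice / self-crosses / chord / reflex corner'' cases genuinely exhaustive, and verifying in each reducible case that the sub-lens produced contains \emph{strictly} fewer cells, which is exactly what drives the contradiction with the minimality of $L$.
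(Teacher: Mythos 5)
Your proposal is correct in substance but organizes the descent genuinely differently from the paper. The paper never passes to a globally minimal lens: in the self-intersection case it minimizes the parameter gap $|s-t|$ along the single geodesic and then, if the resulting 1-lens fails to be sharp, runs a somewhat informal inner induction on the offending geodesic; in the two-geodesic case it chooses crossings consecutive along $h$ and shows the resulting 2-lens is \emph{automatically} sharp by a no-escape argument---the continuation of $g$ past a non-sharp pole can neither cross itself (by hypothesis in that case) nor cross $h([s,t])$ (by minimality of $|s-t|$), so non-sharpness is a flat contradiction rather than the start of a descent. You instead minimize the number of interior cells over \emph{all} 1- and 2-lenses and show a reflex corner at a pole of a minimal lens $L$ manufactures a strictly smaller lens. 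Your extremal scheme buys a unified treatment of both cases and replaces the paper's loose ``continue inductively'' in the 1-lens case with a clean well-ordering argument (it also mirrors the cell-count induction the paper uses later, in Theorem~\ref{emptying-lenses}); the price is exactly the bookkeeping you flag, namely checking that each produced sub-lens lies inside $L$ and omits at least one cell of $L$ (e.g.\ a cell in one of the other interior sectors at the reflex pole), which does go through. Two repairs are needed. First, a simple closed geodesic sitting strictly inside $L$ does \emph{not} bound a 1-lens: by the paper's definitions a 1-lens requires a pole at a transversal self-intersection, and such a loop bounds what the paper calls a \emph{circle}. Fortunately this sub-case, and your whole chord discussion, are dispensable: sharpness is decided entirely by the sectors at the poles, and the arcs relevant there---continuations of the bounding geodesics---manifestly meet $\partial L$ at the pole itself. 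Second, the continuation at a reflex pole may first exit through its \emph{own} boundary arc rather than the opposite one; that is a self-crossing of the bounding geodesic and yields a 1-lens strictly inside $L$, so it is covered by your parenthetical, but your ``cross the opposite bounding arc'' phrasing should be amended to make the exit-case analysis exhaustive.
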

\begin{proof}
Since $M$ is not semicritical, we either have a geodesic which self-intersects, or two geodesics which intersect too many times.  First suppose
that some geodesic $g$ intersects itself.  Then we have $g(s) = g(t)$ for some $s \ne t$, and in the case where $g$ is a closed curve and we think of
$g$ as a function on $\mathbb{R}/\mathbb{Z}$, we want $s - b \notin \mathbb{Z}$.  Choose a pair $\{s,t\}$ for which all this holds, minimizing $|s - t|$.
Without loss of generality $s < t$.  I claim that $g([s,t])$ is a geodesic segment.  In the periodic case, clearly $|s - t| < 1$, or else we could replace
$s$ with $s + 1$ and make $|s - t|$ smaller.  If $g$ fails to be injective on $(s,t]$, then there exist $s < r_1 < r_2 \le t$ with $g(r_1) = g(r_2)$
and $|r_1 - r_2| < |s - t|$ (and $r_1 - r_2 \notin \mathbb{Z}$ in the periodic case), contradicting the choice of $s, t$.  A similar argument shows
that $g$ is injective on $[s,t)$.  So $g([s,t])$ is a geodesic segment with endpoints at $g(s)$ and $g(t)$.
In particular, it is a 1-lens, with pole at $v = g(s) = g(t)$.

If this 1-lens fails to be sharp, then some geodesic $h$ at $v$ enters into the 1-lens.  It eventually exits the 1-lens.  If it does so without crossing itself,
then we have a smaller 2-lens or 1-lens, as in Figure~\ref{no-escape-1}.
Otherwise, we can repeat all of the above argument on $h$ (or rather, on $h$ restricted to the part between $v$ and
where $h$ first exits the 1-lens), and get a smaller 1-lens, and continue inductively, until we get a sharp 1-lens.

Next, suppose that no geodesic intersects itself.  Then $M$ can only fail to be semicritical if two geodesics $g$ and $h$ intersect more than once.  So there
exist $s, t$ in the domain of $h$ with $h(s)$ and $h(t)$ in the range of $g$, and $s \ne t$.  As before, we can take $s - t \notin \mathbb{Z}$ in the periodic
case.  Choose such an $\{s,t\}$ with $|s - t|$ minimized.  Then as before, $|s - t| < 1$ in the periodic case.  By choice of $\{s, t\}$,
$h(r)$ cannot lie on $g$ for any $r$ between $s$ and $t$.  It follows immediately that the geodesic segments between $h(s)$ and $h(t)$ along $h$ and $g$
form a 2-lens, with poles at $v = h(s)$ and $w = h(t)$.  To see that this 2-lens is sharp, suppose that more than one of the cells around $v$ lies inside the 2-lens.
Since $h \ne g$ and the intersection at $v$ is transversal and the boundary of the 2-lens is a Jordan curve, exactly three of the cells lie inside, and we have
the configuration of Figure~\ref{no-escape-2}.
Then $g$ continues on from $v$ into the interior of the 2-lens.  It has no way of escaping, however: it cannot intersect itself
(by assumption), and it cannot intersect $h([s,t])$ (by choice of $\{s,t\}$).  This is a contradiction.
\end{proof}
\begin{figure}
\centering
\def \svgwidth{4in}
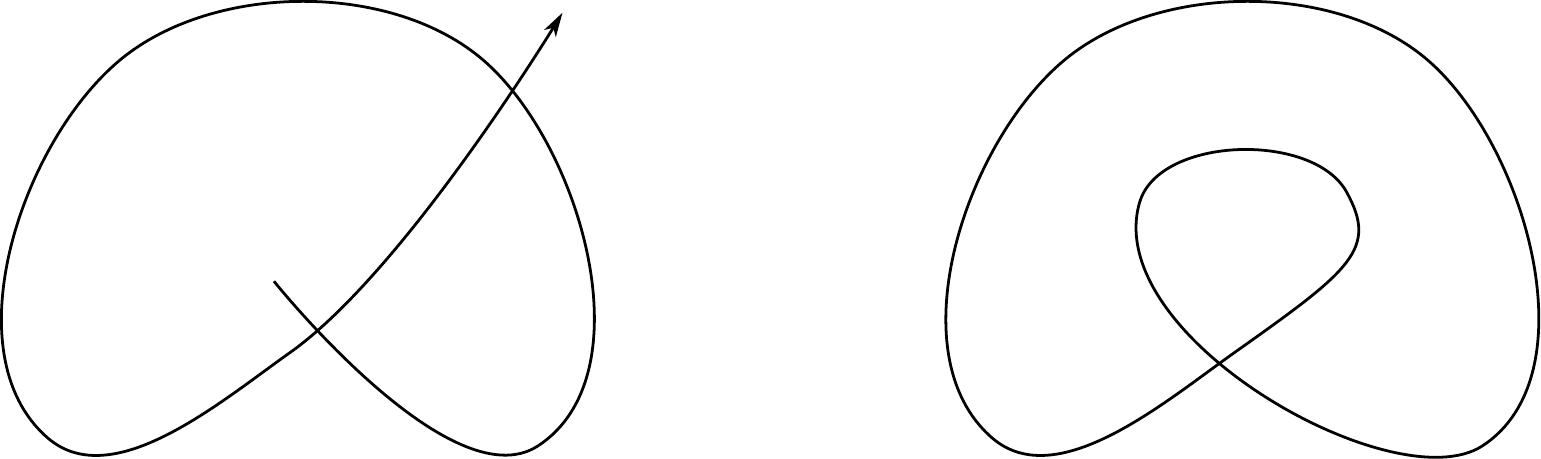
\caption{A non-sharp 1-lens must contain a smaller 2-lens or 1-lens.  A third possibility, not pictured here, is that the geodesic crosses itself
before entering and exiting the original 1-lens.  Then one repeats the argument of Theorem~\ref{lenses-exist} inductively.}
\label{no-escape-1}
\end{figure}
\begin{figure}
\centering
\def \svgwidth{2.5in}
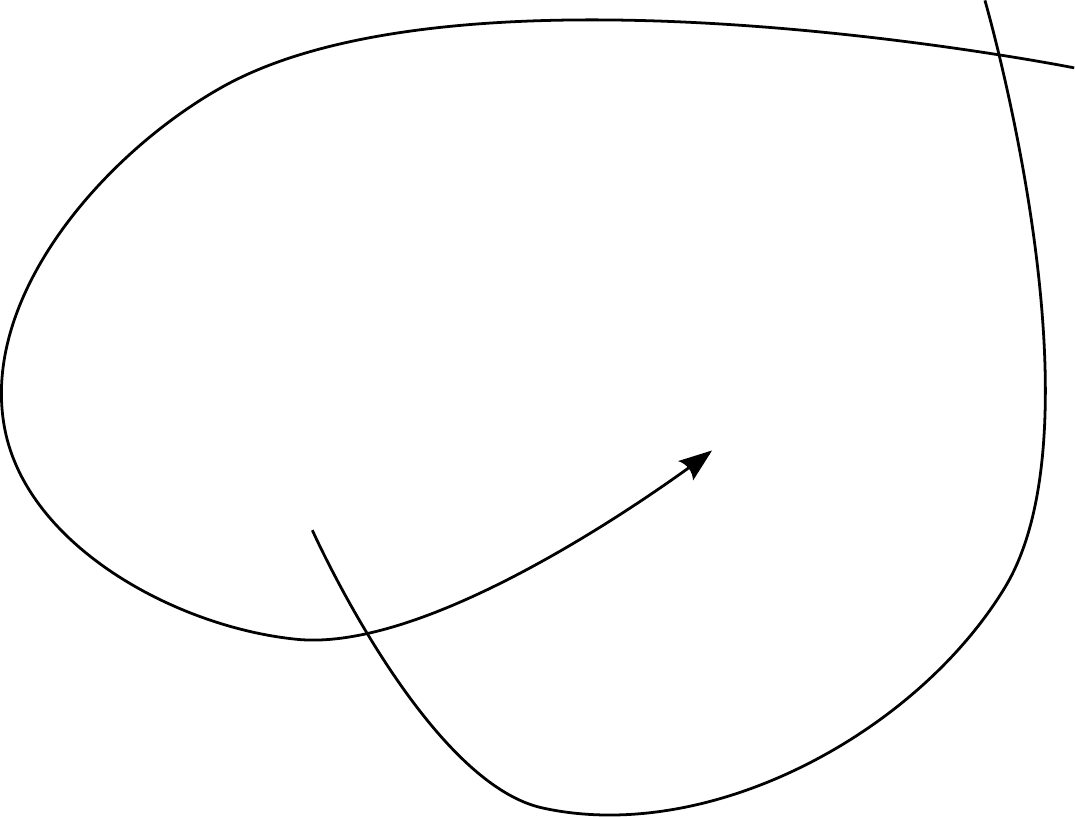
\caption{If sharpness fails at pole $v$, then geodesic $g$ has no means of exiting the 2-lens.  (The pole at $w$ needn't be sharp in this arrangement.)}
\label{no-escape-2}
\end{figure}

\begin{definition}
A \emph{Y-$\Delta$ motion} is the transformation shown in Figure~\ref{fig9-medial-graph-move}.
\end{definition}
If our medial graphs come from circular planar networks, this is nothing but a standard Y-$\Delta$ transformation as in Figure~\ref{fig77-y-delta}.
\begin{figure}
\centering
\def \svgwidth{4in}
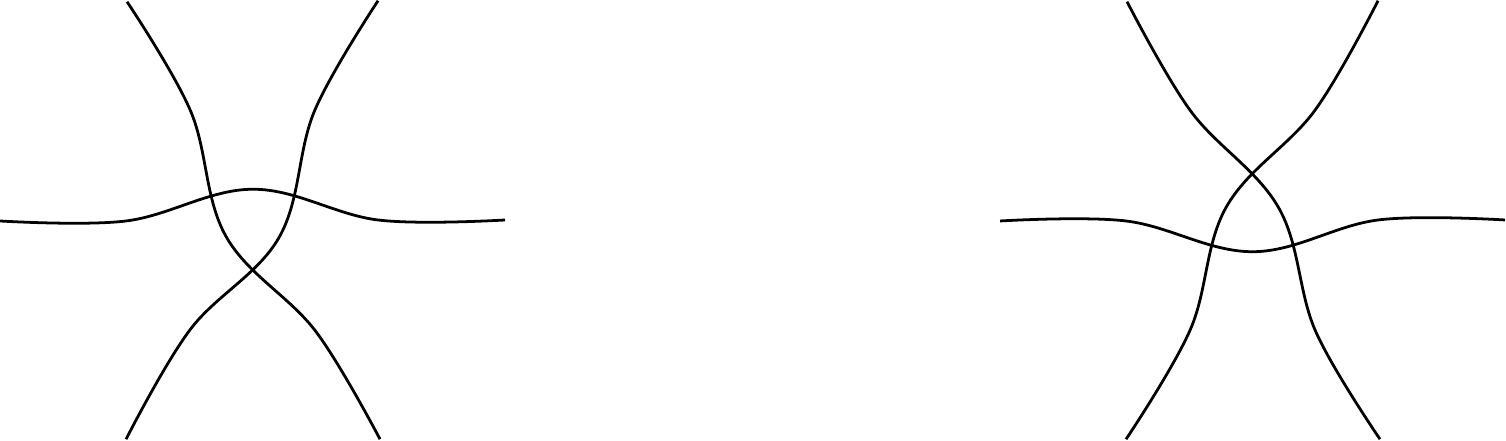
\caption{A Y-$\Delta$ motion in a medial graph.}
\label{fig9-medial-graph-move}
\end{figure}
\begin{figure}
\centering
\def \svgwidth{4in}
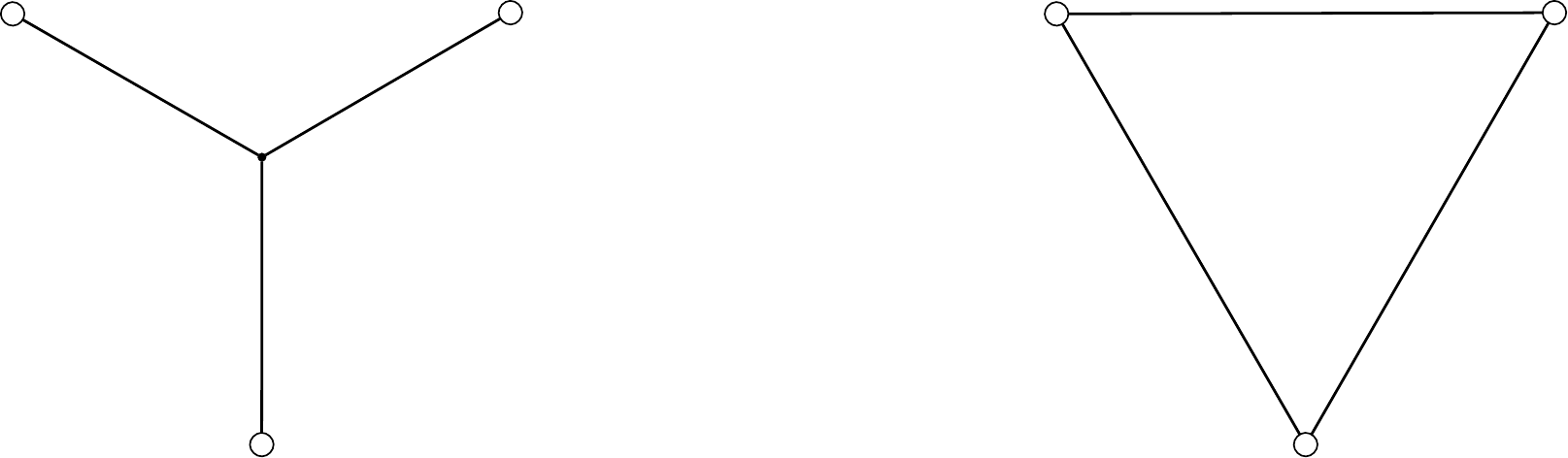
\caption{A Y-$\Delta$ transformation.}
\label{fig77-y-delta}
\end{figure}

\begin{theorem}\label{emptying-lenses}
If $M$ is a medial graph which is not semicritical, then we can apply some combination of the following operations to reach a medial graph which contains
an empty 1-lens or 2-lens:
\begin{itemize}
\item A Y-$\Delta$ motion
\item Removal of an empty circle.
\end{itemize} 
\end{theorem}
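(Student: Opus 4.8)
The plan is to reduce the whole statement to a single local move by induction on the number of cells contained in a sharp lens. By Theorem~\ref{lenses-exist}, a non-semicritical $M$ already contains a sharp $1$-lens or sharp $2$-lens, so it suffices to prove the following: any medial graph containing a sharp lens $L$ can be carried, by Y-$\Delta$ motions and empty-circle removals, to a medial graph containing an \emph{empty} lens. I would prove this by strong induction on $n$, the number of cells inside $L$. If $n = 1$ then $L$ is already empty and there is nothing to do, so assume $n > 1$ and that the claim holds for all smaller values. The goal of the inductive step is to produce, after one allowed move, a medial graph containing a sharp lens with strictly fewer than $n$ cells, to which the induction hypothesis then applies.

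First I would analyze the geodesics inside such a minimal sharp lens $L$. Running the argument of Theorem~\ref{lenses-exist} entirely inside $L$ shows that no geodesic can self-intersect inside $L$, and no two geodesics can cross more than once inside $L$ (counting crossings with the two arcs that bound $L$): either failure would produce a sharp $1$- or $2$-lens properly contained in $L$, hence with fewer cells, contradicting minimality. Thus the interior arrangement is simple. Now if some closed geodesic lies inside $L$, the same Jordan-curve reasoning forces it to cross every other geodesic an even, hence zero, number of times; an innermost such loop is therefore an empty circle, and I remove it. This is precisely where the empty-circle operation is needed, and it strictly lowers $n$, so the induction hypothesis finishes this case.

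In the remaining case $L$ contains no closed geodesics, so its interior, taken with boundary curve $\partial L$, is a circleless semicritical medial graph, hence critical by Theorem~\ref{kind-of-obvious}. I then apply Theorem~\ref{boundary-triangles-exist} to this interior graph. Because its geodesics meet $\partial L$ and one another at most once, a boundary digon can only occur when $L$ is already empty, so the theorem supplies a boundary triangle $T$. This $T$ shares at least one side with $\partial L$, i.e.\ with one of the bounding arcs $g$ or $h$, and its three vertices are crossings of three distinct geodesics, so $T$ is exactly the configuration of a Y-$\Delta$ motion. Performing that motion across $\partial L$ pushes the vertex of $T$ opposite its boundary side out of $L$, so the number of interior vertices inside $L$, and therefore the number of cells inside $L$, drops by one. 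When $T$ meets only one arc the motion does not touch the two crossings of the bounding geodesics at the poles; when $T$ meets both arcs it merely slides one pole along. In either case the two bounding geodesics still cross exactly twice, so the argument of Theorem~\ref{lenses-exist} again exhibits their lens as sharp, now with fewer than $n$ cells, completing the inductive step. The sharp $1$-lens case is identical, with the single self-crossing geodesic playing the role of $\partial L$.

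The step I expect to be the genuine obstacle is the interface between the local Y-$\Delta$ motion and the global lens: verifying that flipping a boundary triangle strictly decreases the cell count inside $L$ \emph{and} leaves a sharp lens behind. Both the bookkeeping showing that a crossing truly exits $L$ (rather than being relocated within it) and the claim that the bounding geodesics keep crossing exactly twice require care, as does the degenerate subcase where $T$ meets $\partial L$ at a pole. Everything else, namely excluding sub-lenses and closed loops by minimality and invoking Theorem~\ref{boundary-triangles-exist} on the interior, is a fairly routine transfer of the earlier results to the inside of the lens.
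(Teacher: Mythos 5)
Your overall strategy is the paper's: induct on the number of cells in a sharp lens $L$, treat the interior of $L$ as a medial graph $M'$ in its own right, show $M'$ is semicritical by minimality, remove empty circles, get criticality from Theorem~\ref{kind-of-obvious}, and use Theorem~\ref{boundary-triangles-exist} plus Y-$\Delta$ motions to push crossings out of $L$. But the case analysis at the boundary of $L$ contains a genuine error. Your claim that ``a boundary digon can only occur when $L$ is already empty'' is false. A boundary digon of $M'$ whose two boundary vertices lie on the \emph{same} bounding arc is indeed impossible in a minimal $L$ (it would be a smaller lens --- in fact an empty one), but a digon whose boundary side passes through a \emph{pole} occurs readily: take a $2$-lens crossed by a single chord $k$ running from the $g$-arc to the $h$-arc. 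Then $M'$ has exactly two cells, both of them pole-touching boundary digons, and no boundary triangles at all, while $L$ has $n=2>1$ cells. At this point your inductive step has no move to make: Theorem~\ref{boundary-triangles-exist} only promises ``a digon or three triangles,'' and here it delivers only digons, so ``the theorem supplies a boundary triangle $T$'' fails. The fix, which is what the paper does, is to observe that a pole-touching digon of $M'$ is in $M$ a genuine triangle --- its boundary side consists of a segment of $g$ and a segment of $h$ meeting at the pole, and together with $k$ these three geodesics bound a triangular cell --- so a Y-$\Delta$ motion pushes $k$ across the pole and out of $L$.

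The second, related gap is your treatment of a boundary triangle $T$ that ``meets both arcs.'' Such a $T$ has a pole in the interior of its boundary side, so in $M$ it is a \emph{quadrilateral} (sides: a $g$-segment and an $h$-segment meeting at the pole, plus the two interior geodesics), and no Y-$\Delta$ motion applies to it; ``merely slides one pole along'' is not one of the two allowed operations, and you give no mechanism for it. The paper never needs this case: when there is no digon, Theorem~\ref{boundary-triangles-exist} gives at least \emph{three} boundary triangles, and since $L$ has at most two poles, pigeonhole yields a triangle avoiding both poles; that one is an honest triangle of three geodesics in $M$ and can be pushed out. So the inductive skeleton of your proof is sound and matches the paper, but the two degenerate pole configurations --- exactly the spots you flagged as ``the genuine obstacle'' --- are resolved incorrectly (digons dismissed by a false claim, pole-touching triangles by an undefined move), and both need the paper's repairs: pigeonhole among three triangles, and the reinterpretation of pole-touching digons as $M$-triangles.
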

\begin{proof}
By the previous theorem, there is at least one sharp lens.  We prove by induction on $n$ that if $M$ is a medial graph containing a sharp lens which contains $n$ cells,
then $M$ can be converted into a medial graph which contains an empty lens.  The base case where $n = 1$ is trivial, because then $M$ already has an empty lens.
Suppose $n > 1$, and we have a sharp lens $L$ with $n$ cells.  If there are any smaller sharp lenses inside $L$, we are done by induction.  Suppose that $L$ contains
no smaller sharp lenses.  Since the boundary of $L$ is a Jordan curve,
we can make a new medial graph $M'$ from the inside of $L$, with the boundary of $L$ as its boundary curve.  I claim that $M'$
is semicritical.  Otherwise, it contains a sharp lens, by the previous theorem, contradicting the fact that $L$ contains no smaller sharp lenses.
If $M'$ contains an empty circle, then we can remove it, and continue by induction.  So we can suppose that $M'$ is circleless.  Then by Theorem~\ref{kind-of-obvious},
$M'$ is critical.  Therefore, it contains at least three boundary triangles, or at least one boundary digon.  If there are at least three boundary triangles,
then at least one of them does not touch a pole of $L$.  Therefore it corresponds to an actual triangle in $M$, and we can carry out a Y-$\Delta$ motion,
pushing the intersection
outside of $L$, while preserving the lens $L$.  This decreases the number of cells in $L$, so we can continue by induction.  Finally, suppose that there is a boundary
digon.  If it does not touch one of the poles of $L$, it corresponds to an empty lens, and we are done.  Otherwise, it corresponds to a triangle in $M$,
and we can push it outside of $L$ by a Y-$\Delta$ motion, and then continue by induction.
\end{proof}

In the monotone linear case, a conductance structure on a medial graph is merely a function that assigns a conductance $c_v > 0$ to each interior vertex $v$
of $M$.  The conductance functions are of the form $\gamma_v(x) = c_v x$.
\begin{lemma}
The following transformations do not effect the boundary relation of a medial graph.  We assume that the vertices involved in the transformations have
monotone linear conductance functions, but the rest of the medial graph need not.
\begin{enumerate}
\item Removing a boundary digon.
\item Removing an empty circle.
\item A motion as in Figure~\ref{fig99-medial-graph-move}.  Here, the new conductances should be given by
\begin{align*}
c_{12} & = \frac{c_1 c_2}{c_1 + c_2 + c_3} \\
c_{13} & = \frac{c_1 c_3}{c_1 + c_2 + c_3} \\
c_{23} & = \frac{c_2 c_3}{c_1 + c_2 + c_3}
\end{align*}
in one direction, and
\begin{align*}
c_1 & = \frac{c_{12} c_{13} + c_{12} c_{23} + c_{13} c_{23}}{c_{23}} \\
c_2 & = \frac{c_{12} c_{13} + c_{12} c_{23} + c_{13} c_{23}}{c_{13}} \\
c_3 & = \frac{c_{12} c_{13} + c_{12} c_{23} + c_{13} c_{23}}{c_{12}}
\end{align*}
in the other direction.
\item Removing an immediate self-loop.
\item Replacing an empty lens with a single intersection, as in Figure~\ref{fig9-point-5-series-parallel}.
In the top row of Figure~\ref{fig9-point-5-series-parallel}, the conductance of the new vertex should be
\[ \frac{c_1 c_2}{c_1 + c_2}\]
while in the bottom row it should be
\[ c_1 + c_2 \]
\end{enumerate}
\end{lemma}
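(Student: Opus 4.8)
The plan is to treat each of the five operations as a \emph{local} modification. In each case there is a small disk $R$ such that $M$ and $M'$ coincide outside $R$---including the (possibly nonlinear) conductances at the untouched vertices---and differ only in the configuration inside $R$. Let the \emph{interface cells} be the finitely many cells lying just outside $R$ that border it; these are common to $M$ and $M'$. Because everything outside $R$ is identical, the outside portion of any labelling imposes exactly the same constraints on the interface cells in $M$ as in $M'$. Hence it suffices to prove the local statement that the set of interface-cell values extendable across the inside of $R$ is the same for $M$ and for $M'$: granting this, a labelling of $M$ restricts on the shared cells to data meeting the common interface constraints, and so re-extends across $R$ to a labelling of $M'$ with identical boundary data, and vice versa, so $\Xi_\gamma$ is unchanged. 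The crucial simplification is that the vertices inside $R$ are assumed monotone linear, so each relevant instance of \eqref{consistency-equation} reads $\phi(z) - \phi(x) = c_v(\phi(w) - \phi(y))$; the local solution space is therefore a genuine linear subspace, and the required projection onto the interface cells is computed by ordinary elimination of the interior cell values (even though the global labelling space need not be linear).

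I would first dispatch the operations whose interior imposes no surviving constraint. For an empty circle (2) the enclosed cell meets no interior vertex, so it appears in no instance of \eqref{consistency-equation} and is not a boundary cell; its value is free, and deleting it removes a variable on which nothing depends. An immediate self-loop (4) is the analogous degeneracy: the two black cells meeting at the relevant vertex coincide, so the argument $\phi(w)-\phi(y)$ fed into \eqref{consistency-equation} vanishes and $c_v\cdot 0 = 0$ merely forces the two flanking white cells equal---exactly the identification performed when the loop is contracted away. For a boundary digon (1), the single interior vertex bounding it contributes one equation in which the digon's own value occurs; solving that equation expresses the digon value as a function of its neighbours, so projecting it away leaves the relation among the remaining boundary cells untouched and shows no independent boundary information is lost.

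The genuine content is in operations (5) and (3), which I would handle by explicit elimination. For an empty lens (5), the disk $R$ holds two vertices of conductances $c_1, c_2$ and one enclosed cell; I would write the two instances of \eqref{consistency-equation} at the poles and eliminate the enclosed cell's value. According to the color of the enclosed cell the two edges are in series or in parallel, and the resulting single constraint on the interface is precisely that of one vertex of conductance $c_1 c_2/(c_1+c_2)$ or $c_1+c_2$, the two rows of Figure~\ref{fig9-point-5-series-parallel}. For the Y-$\Delta$ motion (3), both $M$ and $M'$ have three interior vertices surrounding a central triangular cell, but that central cell is black on one side of the move and white on the other. I would record the three instances of \eqref{consistency-equation}---one per vertex---for each configuration, eliminate the value of the central cell, and verify that the two resulting rank-two relations on the six interface cells coincide exactly when the conductances obey the stated star--triangle formulas.

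I expect this last step to be the main obstacle. Unlike the other operations, it is not the reduction of a system to a subsystem: the color of the central cell flips, so the before and after relations are honestly different linear systems that must be shown equal as relations on the interface, which is the classical star--triangle identity realized in the covoltage bookkeeping. One must also check that the two displayed families of formulas are mutual inverses, so that the correspondence between conductance structures is a genuine bijection rather than a one-way substitution. Finally, the orientation and coloring conventions of \eqref{consistency-equation} must be tracked with care around all three vertices, since a single misplaced sign would break the identity; once that bookkeeping is pinned down, the computation is the routine elimination of a single interior node and collapses to the given rational expressions.
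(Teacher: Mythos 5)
Your overall strategy is the same as the paper's, which disposes of this lemma in one line (``these all follow by direct calculation,'' with (3) being the standard Y-$\Delta$ identity and (5) the series/parallel rules); your localization framework and your treatment of items (2), (3) and (5) are a faithful and more careful elaboration of that, and deferring the elimination algebra in (3) and (5) is consistent with the paper's level of detail. The problem is item (1): your argument rests on a false premise. A boundary digon has \emph{no} interior vertex on its boundary. Its two sides are an arc of the boundary curve and a single edge of a geodesic $g$, its two corners are the boundary vertices where $g$ meets the boundary curve, and since that one edge runs from one endpoint of $g$ to the other, $g$ is crossing-free. So the equation ``in which the digon's own value occurs'' that you propose to solve does not exist; you appear to have conflated the digon with a boundary \emph{triangle}, whose apex is an interior vertex. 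The correct (and easier) observation is that the digon's value appears in no instance of Equation~(\ref{consistency-equation}) and is a free coordinate --- but note that the digon is itself a \emph{boundary} cell, so removing it genuinely changes the boundary data: the digon merges with the boundary cell across $g$, and the honest statement is that $\Xi_M$ is the product of $\Xi_{M'}$ with a free $\mathbb{R}$-factor for the digon coordinate, under the natural identification of the remaining boundary cells. Your general interface-projection scheme, which assumes the modified disk is insulated from the boundary data by cells common to $M$ and $M'$, does not literally apply to this case and needs this separate (if trivial) treatment.

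Two smaller slips in item (4), neither fatal: the consistency equation at the self-loop vertex forces the loop's interior cell to equal the \emph{opposite} diagonal cell, whereas smoothing the kink merges the loop interior with the repeated \emph{side} cell, so your claim that the forced identification is ``exactly the identification performed when the loop is contracted away'' is backwards; the argument survives because the loop cell occurs in only that single equation, so its value is determined and the projection onto the surviving cells is unconstrained either way. Also, for one of the two colorings the repeated cell is white rather than black, in which case the equation reads $c_v\bigl(\phi(L)-\phi(O)\bigr)=0$ and you need $c_v \neq 0$ (guaranteed since a monotone linear conductance is a bijection), not merely $c_v \cdot 0 = 0$.
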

\begin{figure}
\centering
\def \svgwidth{4in}
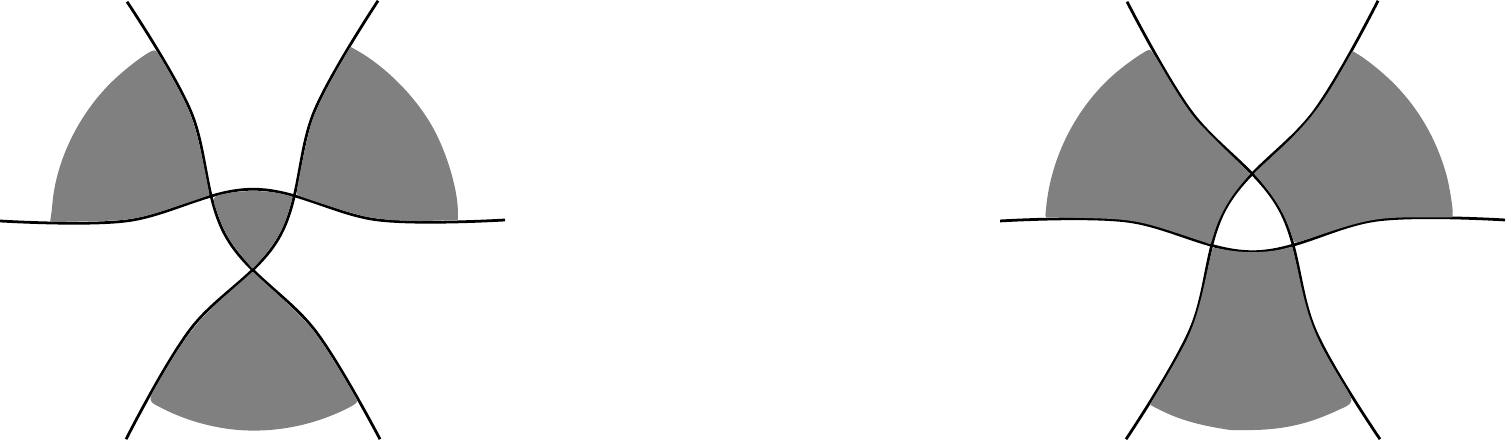
\caption{A Y-$\Delta$ motion in a colored medial graph.}
\label{fig99-medial-graph-move}
\end{figure}
\begin{figure}
\centering
\def \svgwidth{4in}
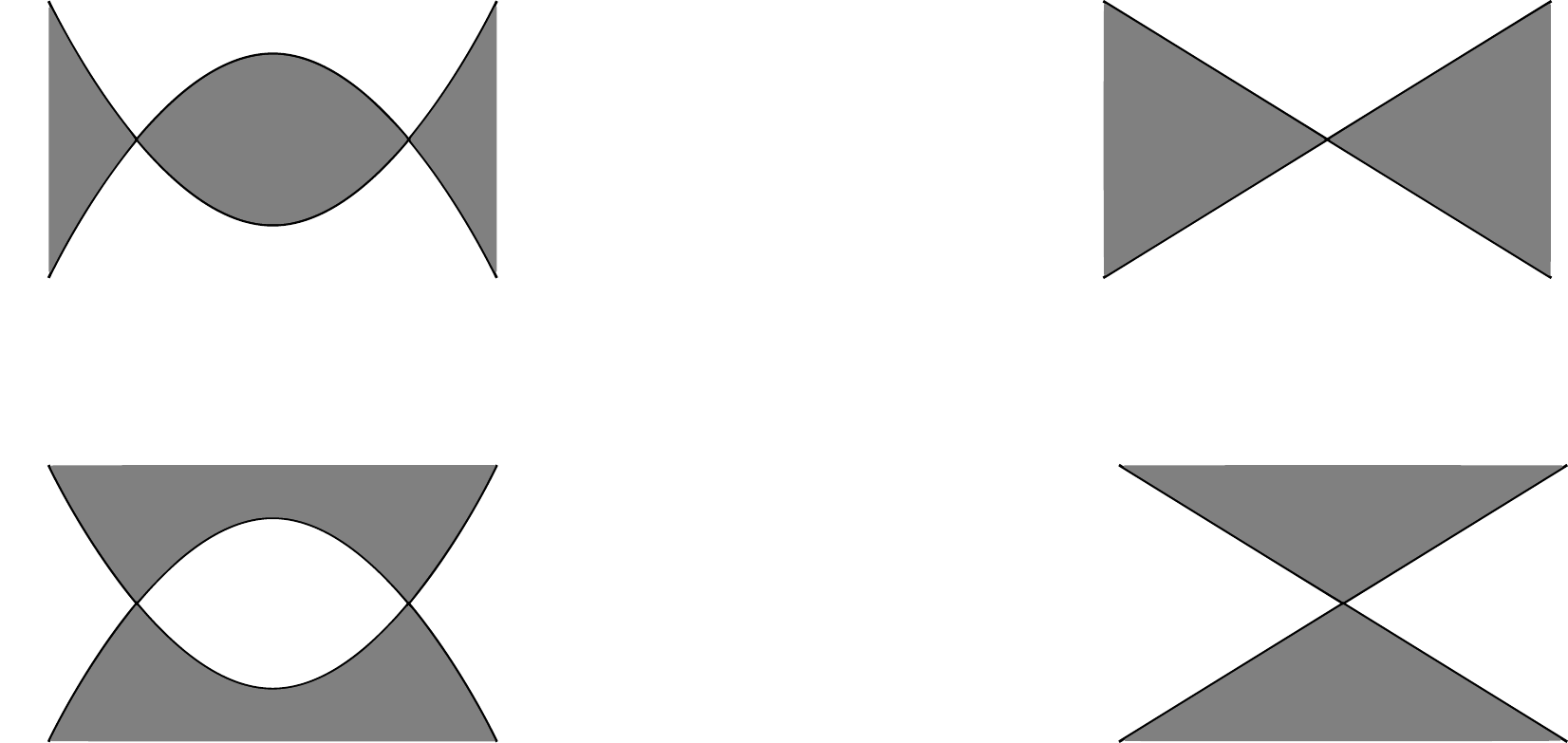
\caption{Replacing an empty lens with a single intersection.}
\label{fig9-point-5-series-parallel}
\end{figure}
\begin{proof}
These all follow by direct calculation.  The third rule is just the standard Y-$\Delta$ transformation, while the fifth consists of the standard
rules for resistors in series or parallel.
\end{proof}
The Y-$\Delta$ transformation is reversible, while the process of replacing a series or parallel configuration with a single resistor throw away information.
This immediately yields the following conclusion:
\begin{corollary}\label{transformations-and-recoverability}~
\begin{itemize}
\item Adding or removing a boundary digon does not affect weak or strong recoverability.
\item Adding or removing an empty circle does not affect weak or strong recoverability.
\item A Y-$\Delta$ motion does not affect weak recoverability.
\item Any medial graph containing an immediate self-loop or an empty lens is not weakly or strongly recoverable.
\end{itemize}
\end{corollary}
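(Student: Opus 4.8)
The plan is to read off all four bullets directly from the preceding lemma, after sorting its five moves into two kinds: those that induce a \emph{bijection} on the relevant class of conductance structures while preserving $\Xi_\gamma$, and those that \emph{collapse} several conductance structures onto one. A move of the first kind transports injectivity of $\gamma \mapsto \Xi_\gamma$ in both directions and hence preserves recoverability; a move of the second kind exhibits an explicit failure of injectivity and hence witnesses non-recoverability. The only genuine content is keeping track of \emph{which} class of conductance structures --- all bijective ones (strong recoverability) or only the monotone linear ones (weak recoverability) --- each move is a bijection on.

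First the reversible moves. A boundary digon and an empty circle are each bounded only by crossing-free arcs, so removing one (parts 1 and 2 of the preceding lemma) involves no interior vertex of $M$ and hence no conductance parameter; the induced correspondence on conductance structures is the identity on the conductance data, a bijection. By the lemma, $\Xi_\gamma$ is unchanged under the natural identification of the boundary cells of $M$ with those of the reduced graph $M'$. Being the identity on conductances, this bijection carries monotone linear structures to monotone linear structures and all bijective structures to all bijective structures, so $\gamma \mapsto \Xi_\gamma$ is injective for $M$ exactly when it is for $M'$, on each class separately; this gives the first two bullets for both weak and strong recoverability. The Y-$\Delta$ motion (part 3) is also a bijection on conductance structures, but only on the \emph{monotone linear} ones: the explicit formulas for $c_{12}, c_{13}, c_{23}$ and their stated inverses are invertible and send positive conductances to positive conductances, so they match up monotone linear structures on $M$ and $M'$ while preserving $\Xi_\gamma$. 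Hence weak recoverability transfers in both directions. There is no corresponding bijection for arbitrary bijective $\gamma_v$ (indeed, the Y-$\Delta$ transformation is unavailable in the nonlinear setting), which is exactly why only weak recoverability is claimed in the third bullet.

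Now the information-losing moves, which yield the fourth bullet. If $M$ contains an immediate self-loop, part 4 says its removal leaves $\Xi_\gamma$ unchanged, i.e. the conductance at the self-loop vertex never appears in the boundary relation; letting that single conductance range over the positive reals while fixing everything else produces distinct monotone linear structures with identical $\Xi_\gamma$, so $\gamma \mapsto \Xi_\gamma$ is not injective on monotone linear structures and $M$ is not weakly recoverable. If $M$ contains an empty lens, part 5 says $\Xi_\gamma$ depends on the two pole conductances $c_1, c_2$ only through the single reduced conductance, either $\frac{c_1 c_2}{c_1 + c_2}$ or $c_1 + c_2$; each of these is a non-injective function of $(c_1, c_2)$ on the positive reals, so again two distinct monotone linear structures share a boundary relation and $M$ is not weakly recoverable. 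In both cases, because the monotone linear structures form a subset of all bijective structures, a collision within that subset is a fortiori a collision among all bijective structures, so $M$ fails to be strongly recoverable as well.

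The step I expect to require the most care is the weak-versus-strong bookkeeping just invoked: for each move one must check not merely that $\Xi_\gamma$ is preserved, but that the correspondence on conductance structures restricts correctly to the monotone linear class and, for the digon and circle, extends to \emph{all} bijective structures --- the asymmetry between the third bullet (weak only) and the first two (weak and strong) lives entirely in this distinction. A secondary, purely bookkeeping point is the identification of boundary cells across a digon removal, where the set of boundary cells genuinely changes and the lemma's phrase ``does not affect the boundary relation'' must be read through that identification.
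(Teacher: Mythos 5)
Your proposal is correct and follows essentially the same route as the paper, whose entire proof is the remark preceding the corollary: the Y--$\Delta$ move is reversible (on monotone linear structures) while series/parallel reduction discards information, so the corollary is immediate from the lemma. Your additional bookkeeping --- that digon and circle removal involve no interior vertex and hence transfer \emph{strong} recoverability (the lemma's monotone-linear hypothesis being vacuous there), that failure of injectivity on the monotone linear subclass a fortiori kills strong recoverability, and that the boundary relation must be read through the identification of boundary cells --- is exactly what the paper leaves implicit.
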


This yields half of Theorem~\ref{main-result}.
\begin{corollary}
If $M$ is a medial graph and $M$ is not semicritical, then $M$ is not weakly recoverable.
\end{corollary}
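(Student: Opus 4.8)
The plan is to reduce the statement entirely to the results already assembled in this section, chaining together the invariance properties recorded in Corollary~\ref{transformations-and-recoverability}. Since the genuinely geometric and combinatorial work has been carried out in Theorems~\ref{lenses-exist} and~\ref{emptying-lenses}, the corollary should follow in just a few lines.

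First I would invoke Theorem~\ref{emptying-lenses}: because $M$ is not semicritical, there is a finite sequence of moves---each either a Y-$\Delta$ motion or the removal of an empty circle---carrying $M$ to a medial graph $M'$ that contains an empty $1$-lens or empty $2$-lens. The key observation is that every move in this sequence preserves weak recoverability. Indeed, by Corollary~\ref{transformations-and-recoverability}, a Y-$\Delta$ motion does not affect weak recoverability, and adding or removing an empty circle affects neither weak nor strong recoverability; moreover each of these equivalences is two-sided. Composing them along the sequence yields that $M$ is weakly recoverable if and only if $M'$ is weakly recoverable.

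It then remains only to observe that $M'$ itself fails to be weakly recoverable. This is immediate from the last bullet of Corollary~\ref{transformations-and-recoverability}, since any medial graph containing an empty lens is not weakly (nor strongly) recoverable. Transporting this failure back along the chain of equivalences shows that $M$ is not weakly recoverable, as desired.

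The only point that demands any care---and it is the nearest thing to an obstacle here---is verifying that Theorem~\ref{emptying-lenses} uses \emph{only} operations whose weak-recoverability invariance has actually been established, rather than some stronger move for which no such guarantee exists. Since that theorem's conclusion is phrased precisely in terms of Y-$\Delta$ motions and empty-circle removals, both of which are covered by Corollary~\ref{transformations-and-recoverability}, no gap appears, and the argument is genuinely just a composition of the preceding results. (Note that strong recoverability is not addressed by this route, because Y-$\Delta$ motions are asserted to preserve only weak recoverability; the corollary is correspondingly stated for weak recoverability alone.)
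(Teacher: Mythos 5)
Your proposal is correct and is essentially the paper's own proof: the paper simply says ``Combine Corollary~\ref{transformations-and-recoverability} and Theorem~\ref{emptying-lenses},'' and your write-up spells out exactly that combination, including the (correct) observation that the invariances are two-sided and that the route only certifies weak recoverability.
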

\begin{proof}
Combine Corollary~\ref{transformations-and-recoverability} and Theorem~\ref{emptying-lenses}.
\end{proof}

For the other direction, we will use the following property of boundary triangles.
\begin{theorem}
Let $M$ be a medial graph with a boundary triangle $abc$, and let $M'$ be the medial graph obtained by \emph{uncrossing} the boundary
triangle, as in Figure~\ref{fig23-uncrossing}.
Let $a$ be the apex of $abc$.  If we fix the conductance function $\gamma_a$, then the boundary relation of $M'$ is determined
by the boundary relation of $M$.
\end{theorem}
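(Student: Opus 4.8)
The plan is to build an explicit bijection between labellings of $(M,\gamma)$ and labellings of $(M',\gamma')$ — where $\gamma'$ is $\gamma$ restricted to the interior vertices other than $a$ — and then read off how this bijection transforms boundary data. First I would fix the local picture at the apex. Let $g$ and $h$ be the geodesics crossing at $a$, and let the four cells meeting at $a$, in counterclockwise order, be $T, C_1, Y, C_2$, with $T$ the boundary triangle itself and $Y$ the cell opposite $T$ across $a$. Three geometric facts drive everything. (i) Since $abc$ is a three-sided cell, its two legs carry no interior vertices besides $a$, so $a$ is the \emph{only} interior vertex incident to $T$. (ii) The cells $T$, $C_1$, $C_2$ are all boundary cells: $T$ by hypothesis, and $C_1$, $C_2$ because each touches the boundary curve at the foot of a leg of the triangle. (iii) Uncrossing deletes $a$ and separates $g$ from $h$, which merges the two opposite cells $T$ and $Y$ into a single cell $Y'$ of $M'$ (again a boundary cell, inheriting the base of $T$), while every cell and interior vertex away from $a$ is untouched.

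Next I would set up the bijection on labellings. Given a labelling $\phi$ of $(M,\gamma)$, define $\phi'$ on the cells of $M'$ by $\phi'(Y') = \phi(Y)$, keeping $\phi'(c) = \phi(c)$ for every other cell, and discarding $\phi(T)$. This $\phi'$ is a genuine labelling because, by (i), the only consistency equation of $M$ that mentions $\phi(T)$ is the one at $a$; and any interior vertex $v \neq a$ incident to the merged cell $Y'$ was incident to $Y$ in $M$, never to $T$. Hence each consistency equation of $M'$ is \emph{literally} one of the equations of $M$ at a vertex $v \neq a$, with $\phi(Y)$ now read off as $\phi'(Y')$. In the reverse direction I would keep all shared cell values, set $\phi(Y) = \phi'(Y')$, and define $\phi(T)$ to be the unique value making the equation at $a$ hold; this is possible because that equation expresses $\phi(Y) - \phi(T)$ (or $\phi(T) - \phi(Y)$) as $\gamma_a$ applied to a difference of $\phi(C_1)$ and $\phi(C_2)$, and $\gamma_a$ is a bijection. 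These two assignments are mutually inverse, so labellings of $(M,\gamma)$ and of $(M',\gamma')$ correspond bijectively.

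Finally I would pass to boundary data, where fact (ii) is decisive. For any labelling $\phi$ the values $\phi(T), \phi(C_1), \phi(C_2)$ all lie in the boundary data of $M$, so the consistency equation at $a$ expresses the new coordinate $\phi'(Y') = \phi(Y)$ purely in terms of these boundary values via $\gamma_a$ (or $\gamma_a^{-1}$, according to the coloring at $a$). Under the labelling bijection every boundary cell of $M'$ other than $Y'$ coincides with a boundary cell of $M$ and retains its value, while the $Y'$-coordinate is the function of the $M$-boundary-data just described (and the $T$-coordinate, which is redundant on $\Xi_\gamma$ by the same equation, is dropped). This defines an explicit map $\Phi$, depending only on $\gamma_a$ and the fixed combinatorial identities of $T, C_1, C_2$, with $\Xi_{\gamma'} = \Phi(\Xi_\gamma)$; since $\gamma_a$ is fixed, the boundary relation of $M'$ is thereby determined by that of $M$. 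I expect the main obstacle to be the first step: verifying rigorously that uncrossing merges precisely $T$ with $Y$ (not $C_1$ with $C_2$), that $a$ is the sole interior vertex incident to $T$, and that $C_1, C_2$ are boundary cells — everything downstream, in particular the claim that each equation of $M'$ is inherited verbatim and that $\phi(Y)$ is computable from boundary data, rests on these facts. A minor bookkeeping point is that $Y$ may itself already be a boundary cell of $M$ (as in the degenerate case of a single crossing), in which case $\Phi$ simply forgets the redundant $T$-coordinate rather than replacing it; either way the resulting map is determined by $\gamma_a$.
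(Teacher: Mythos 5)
Your proposal is correct and takes essentially the same route as the paper: both arguments invert the consistency equation (\ref{consistency-equation}) at the apex, using bijectivity of $\gamma_a$, to put labellings of $M$ and $M'$ in explicit bijection, and then transfer this to a coordinate change between the boundary relations (the paper's $(\ldots,p,q,r,\ldots)\in\Xi' \iff (\ldots,p,q\pm\gamma_a(p-r),r,\ldots)\in\Xi$). The only difference is that you spell out the geometric facts the paper leaves to Figure~\ref{fig23-uncrossing} --- that the apex is the unique interior vertex incident to the triangle cell, that the two side cells are boundary cells, and that uncrossing merges the triangle with the opposite cell --- which is extra care in the same argument, not a different one.
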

\begin{proof}
Let $w,x,y,z$ be the four cells around $a$, as in Figure~\ref{fig23-uncrossing}.
By (\ref{consistency-equation}), if $\phi$ is any labelling of $M'$, then $\phi$ extends to a labelling of $M$
uniquely by setting
\[ \phi(w) = \phi(y) \pm \gamma_a(\phi(x) - \phi(y))\]
where the $\pm$ depends on the coloring.  It follows that if $\Xi$ and $\Xi'$ are the boundary relations of $M$ and $M'$, then
\[ (\ldots,p,q,r,\ldots) \in \Xi' \iff (\ldots,p,q \pm \gamma_a(p-r), r, \ldots) \in \Xi,\]
so $\Xi$ and $\Xi'$ determine each other.
\end{proof}
\begin{figure}
\centering
\def \svgwidth{3.5in}
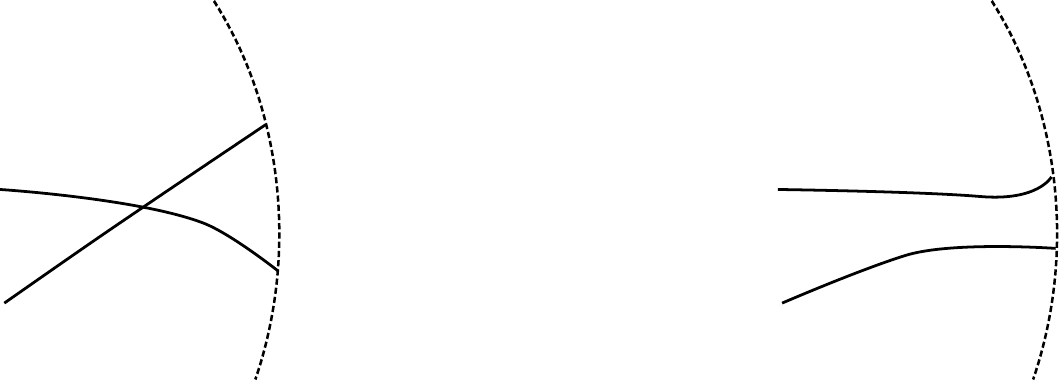
\caption{``Uncrossing'' a boundary triangle.}
\label{fig23-uncrossing}
\end{figure}

In light of this, we can reduce Theorem~\ref{main-result} to one condition:
\begin{theorem}\label{conditional}
Suppose the following statement is true: if $M$ is any critical colored medial graph, and $abc$ is a boundary triangle of $M$, with apex $a$, then
$\gamma_a$ is determined by the boundary relation of $M$.  Then Theorem~\ref{main-result} is true.
\end{theorem}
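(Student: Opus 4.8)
The plan is to reduce Theorem~\ref{main-result} to the single implication \emph{semicritical $\Rightarrow$ strongly recoverable}, and then to prove that implication by a layer-stripping induction that peels off one boundary triangle at a time. Note first that the three conditions already sit in a cycle of implications once this one is established: strong recoverability trivially implies weak recoverability, and the corollary proved just above (that a medial graph which is not semicritical is not weakly recoverable) gives, contrapositively, weakly recoverable $\Rightarrow$ semicritical. So it suffices to close the loop by showing that every semicritical $M$ is strongly recoverable, i.e. that the assignment $\gamma \mapsto \Xi_\gamma$ is injective on all conductance structures, not merely the monotone linear ones.

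I would prove this by induction on the number of cells of $M$. The base case (one cell) has no interior vertices and hence no conductance to recover, so it is vacuous. For the inductive step, first reduce to the critical case: if $M$ contains a circle then, as in the proof of Theorem~\ref{kind-of-obvious}, a cell-minimal circle is empty, and removing it strictly decreases the cell count while keeping $M$ semicritical and, by Corollary~\ref{transformations-and-recoverability}, not changing recoverability; the induction hypothesis then finishes this case. We may therefore assume $M$ is circleless, hence critical by Theorem~\ref{kind-of-obvious}. Now apply Theorem~\ref{boundary-triangles-exist}: since $M$ has more than one cell it has a boundary digon or at least three boundary triangles. In a critical graph a boundary digon is bounded by one boundary arc and one geodesic arc carrying no interior vertices, so the geodesic through that arc meets nothing and carries no conductance; removing it (again Corollary~\ref{transformations-and-recoverability}) reduces the cell count without affecting recoverability, and we are done by induction. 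Otherwise we have a boundary triangle $abc$ with apex $a$.

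For the boundary-triangle case I would invoke the hypothesis of Theorem~\ref{conditional}: since $M$ is critical, $\gamma_a$ is determined by $\Xi_\gamma$. Let $M'$ be the graph obtained by uncrossing $abc$. By the uncrossing theorem proved just above, once $\gamma_a$ is fixed the boundary relations of $M$ and $M'$ determine each other; in particular the given $\Xi_\gamma$ together with the recovered $\gamma_a$ determine the boundary relation $\Xi'$ of $M'$. Uncrossing deletes the single interior vertex $a$ and leaves every other conductance function untouched, so the conductance structure of $M'$ is exactly the restriction of $\gamma$ to the remaining interior vertices, and $M'$ has strictly fewer cells and is again semicritical. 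The induction hypothesis says $M'$ is strongly recoverable, so $\Xi'$ determines all the remaining $\gamma_v$; combined with $\gamma_a$ this recovers $\gamma$ completely, establishing injectivity of $\gamma \mapsto \Xi_\gamma$ for $M$.

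The main thing to get right --- and the step I expect to be the real obstacle --- is the bookkeeping around the uncrossing move: verifying that uncrossing a boundary triangle of a critical graph yields a semicritical graph with strictly fewer cells (so that the induction parameter genuinely decreases and the hypothesis applies), and checking that it removes exactly the vertex $a$ and no other interior vertices, so that recovering $\gamma_a$ separately from the rest is legitimate. It is also essential to induct on the number of cells rather than on the number of interior vertices, since the circle- and digon-removal steps strip away conductance-free geodesics and hence leave the interior-vertex count unchanged; only the cell count decreases uniformly across all three reduction moves.
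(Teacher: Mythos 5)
Your proof is correct and follows essentially the same route as the paper's: reduce to showing semicritical implies strongly recoverable, strip empty circles and boundary digons via Corollary~\ref{transformations-and-recoverability} and Theorem~\ref{kind-of-obvious}, produce a boundary triangle by Theorem~\ref{boundary-triangles-exist}, recover $\gamma_a$ from the hypothesis, and uncross to recurse. The only difference is presentational --- you run an explicit induction on the number of cells where the paper takes a cell-minimal counterexample, and you make explicit the cell-count decrease under uncrossing that the paper leaves implicit.
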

\begin{proof}
We have already seen that weak recoverability implies semicriticality.  Clearly, strong recoverability implies weak recoverability.  It remains to show
that semicriticality implies strong recoverability.  Let $M$ be a semicritical medial graph.  Suppose $M$ is not strongly recoverable, and take $M$ to have as few cells
as possible.  By Theorem~\ref{transformations-and-recoverability}, we can remove empty circles from $M$ without affecting strong recoverability.  Therefore,
$M$ is circss, and by Theorem~\ref{kind-of-obvious}, it is critical.  Similarly, $M$ cannot have a boundary digon, by Theorem~\ref{transformations-and-recoverability}.
If $M$ has no geodesics, then $M$ is strongly recoverable because the inverse boundary problem requires the recovery of no information.  Otherwise, $M$ must have
a boundary triangle by Theorem~\ref{boundary-triangles-exist}.  Let $M'$ be the medial graph obtained by uncrossing the boundary triangle.  By assumption, we can
find the conductance function at the apex of the boundary triangle, and by the previous theorem, this in turn yields the boundary relation of $M'$.
Uncrossing a boundary triangle does not break criticality, so $M'$ is critical, and by choice of $M$ it is therefore strongly recoverable.  So given the boundary
relation of $M'$, we can find all the remaining conductance functions of $M$.  Therefore $M$ is strongly recoverable, a contradiction.
\end{proof}

It remains to show how to recover boundary the conductance functions of boundary triangles.  We will do this by setting up special boundary value problems
which allow us to read off the conductance function at a boundary triangle.\footnote{In \cite{CIM}, conductances of boundary triangles are recovered
using linear algebra.  Since we allow nonlinear conductances, this approach won't work here.}
The main difficulty will be showing that our boundary value problem is well-posed,
having a unique solution.

\section{The Propagation of Information}\label{sec:info-prop}
To accomplish this, we will need a way of modelling the propagation of information through a medial graph.  We begin
by defining partial labellings of medial graphs.
\begin{definition}
If $M$ is a fixed medial graph, a \emph{cellset} is a set of cells in $M$.  If $M$ is the medial graph associated
to a bijective network $\Gamma$, and $S$ is a cellset, then a \emph{labelling} of $S$ is a function $\phi:S \to \mathbb{R}$
which satisfies (\ref{consistency-equation}) whenever $w,x,y,z \in S$.
\end{definition}
Note that the notion of a labelling depends on the conductance functions.

Next we turn to combinatorial definitions that depend solely on $M$, not even on its coloring.  These will
be our model for the propagation of information.  For motivation, see Theorem~\ref{safe-sets-work} below.
\begin{definition}
Let $S$ be a cellset.  Then define $\rank(S)$ to be $|S| - v(S)$ where $|S|$ is the number
of cells in $S$, and $v(S)$ is the number of interior vertices for which all four surrounding cells are in $S$.
\end{definition}
\begin{figure}
\centering
\def \svgwidth{4in}
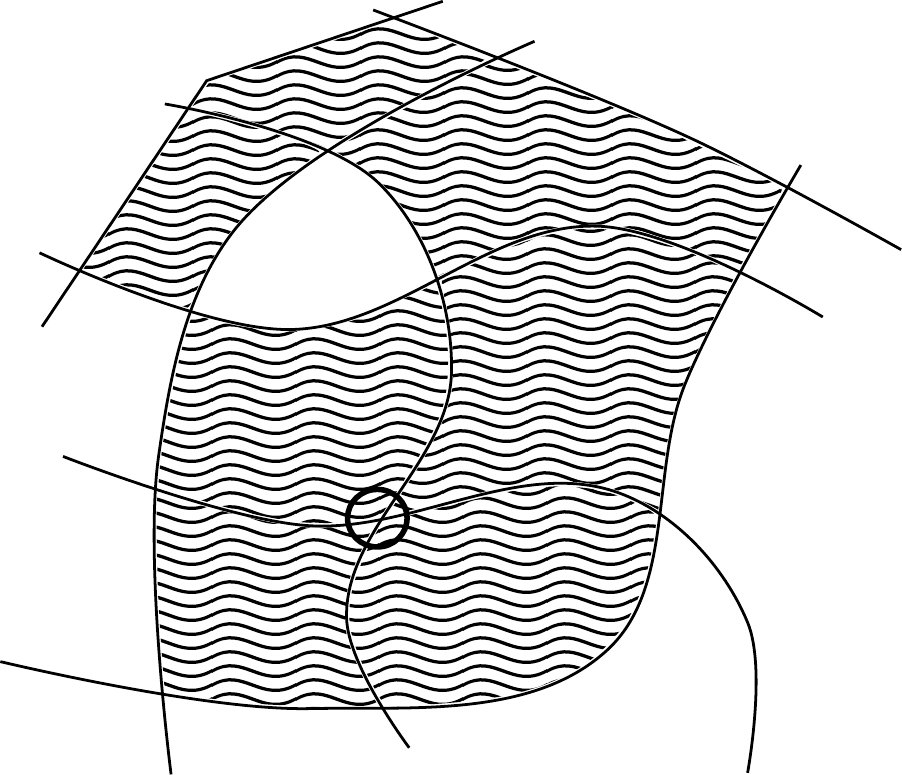
\caption{The wavy cellset has rank six, because it contains seven cells, and there is one interior vertex completely surrounded
by $S$ (the circled one).}
\label{fig14-rank-example}
\end{figure}
The rank of $S$ estimates the number of degrees of freedom of a labelling of $S$, since it counts the number
of cells (the number of variables) minus the number of constraining equations.  See Figure~\ref{fig14-rank-example} for an example.
\begin{definition}
If $S$ is a cellset and $x$ is a cell not in $S$, then we say that $S \cup \{x\}$ is a \emph{simple extension}
of $S$ if there is an interior vertex $v$ of $M$ for which three of the cells around $v$ are in $S$ and $x$ is the fourth.
If there does not exist another interior vertex $v'$ with the same property, then we say that $S \cup \{x\}$ is
a \emph{safe simple extension}.
\end{definition}
\begin{figure}
\centering
\def \svgwidth{5in}
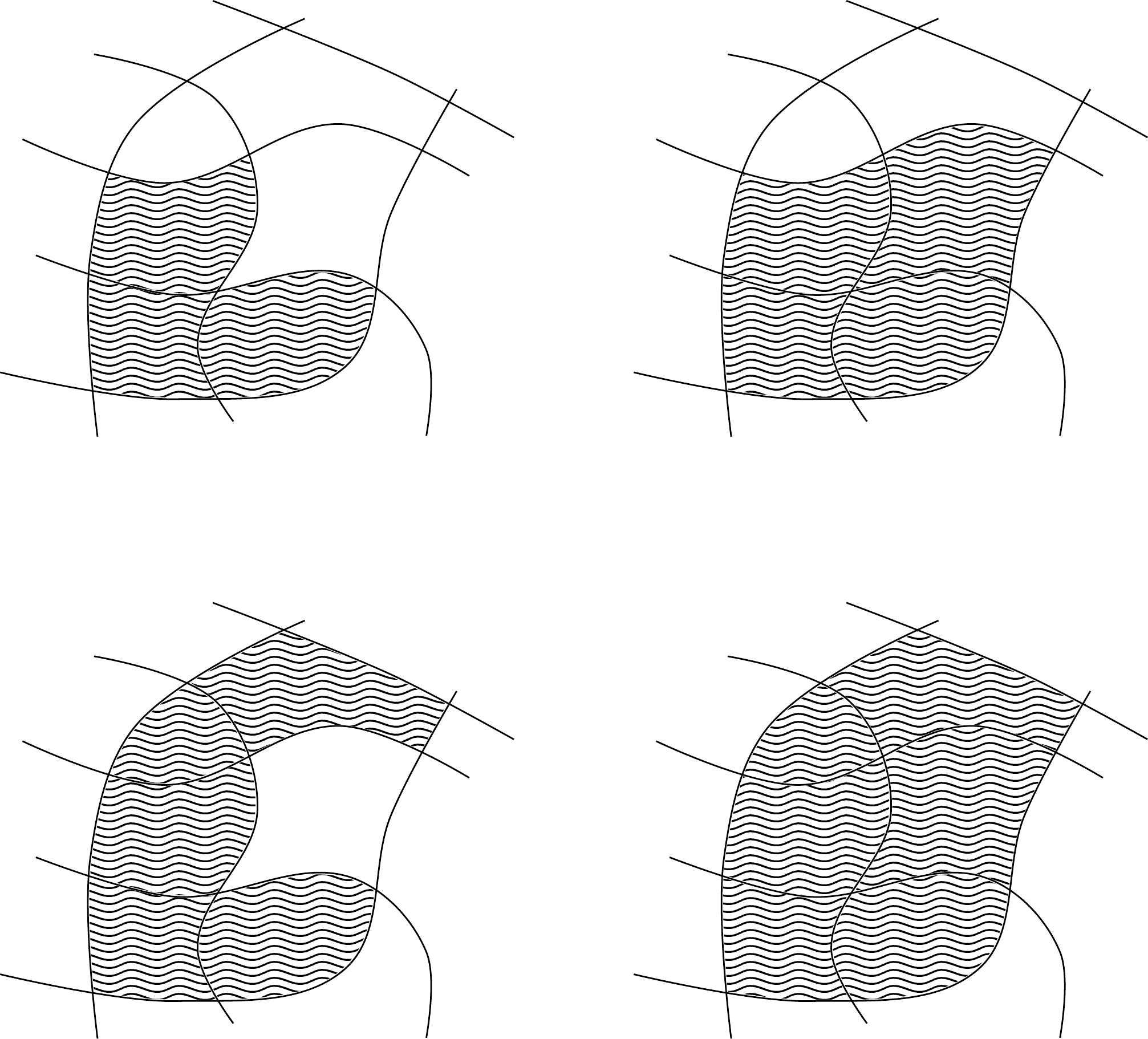
\caption{The top right is a safe simple extension of the top left, while the bottom right is a (non-safe) simple extension of the bottom left.}
\label{fig13-simple-extensions}
\end{figure}
\begin{lemma}\label{rank-and-extension}
Let $S$ be a cellset and $x$ be a cell not in $S$.  Then $\rank(S \cup \{x\}) \le \rank(S)$ if $S \cup \{x\}$
is a simple extension of $S$, and $\rank(S \cup \{x\}) = \rank(S) + 1$ otherwise.  Moreover,
$\rank(S \cup \{x\}) = \rank(S)$ if and only if $S \cup \{x\}$ is a safe simple extension of $S$.
\end{lemma}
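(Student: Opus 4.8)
The plan is to prove this by directly computing how each of the two terms in $\rank(S) = |S| - v(S)$ changes when we adjoin the cell $x$. The cardinality term is immediate: since $x \notin S$, we have $|S \cup \{x\}| = |S| + 1$. So everything hinges on understanding the difference $v(S \cup \{x\}) - v(S)$, i.e. the number of interior vertices that become newly surrounded on all four sides once $x$ is added.

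First I would observe that adjoining $x$ can never destroy the ``fully surrounded'' status of a vertex: if all four cells around $v$ already lie in $S$, they certainly still lie in $S \cup \{x\}$. Hence $v(S \cup \{x\}) \ge v(S)$, and a vertex contributes to the difference exactly when it is surrounded by $S \cup \{x\}$ but not by $S$. For such a $v$, the new cell $x$ must be one of the four cells meeting at $v$, and the remaining three must already lie in $S$ --- which is precisely the condition that $v$ witnesses $S \cup \{x\}$ being a simple extension of $S$. Writing $k$ for the number of such witnessing vertices, this gives $v(S \cup \{x\}) = v(S) + k$, and therefore
\[ \rank(S \cup \{x\}) = (|S| + 1) - (v(S) + k) = \rank(S) + 1 - k. \]

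The lemma then follows from a trichotomy on $k$, using only the definitions. By construction, $S \cup \{x\}$ is a simple extension exactly when $k \ge 1$, and a safe simple extension exactly when $k = 1$. So if $S \cup \{x\}$ is not a simple extension ($k = 0$) we get $\rank(S \cup \{x\}) = \rank(S) + 1$; if it is a simple extension ($k \ge 1$) we get $\rank(S \cup \{x\}) = \rank(S) + 1 - k \le \rank(S)$; and equality $\rank(S \cup \{x\}) = \rank(S)$ holds if and only if $k = 1$, that is, if and only if the extension is safe. This establishes all three assertions at once.

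There is essentially no hard step here; the whole content is the bookkeeping identity $v(S \cup \{x\}) = v(S) + k$. The one point deserving care is the claim that the only vertices whose surrounding status changes are those having $x$ as one of their four cells: I would verify explicitly that any vertex counted in the difference must have $x$ among its four surrounding cells (otherwise all four of its cells already lie in $S$ and it was already counted in $v(S)$), so that the vertices causing the increase in $v$ coincide exactly with the witnesses to simple extension. Once this identification is made, $k$ simultaneously measures the change in $v(S)$ and the number of simple-extension witnesses, and the three cases drop out.
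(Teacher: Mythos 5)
Your proof is correct and takes essentially the same approach as the paper's: the paper likewise notes $|S \cup \{x\}| = |S| + 1$ and reduces the claim to how $v(\cdot)$ changes, splitting into the three cases (not a simple extension, safe simple extension, simple but unsafe) and declaring them obvious from the definitions. Your identity $v(S \cup \{x\}) = v(S) + k$, with $k$ the number of witnessing vertices, is just a slightly more explicit rendering of that same case analysis, since $k \ge 1$ characterizes simple extensions and $k = 1$ characterizes safe ones.
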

\begin{proof}
Obviously $|S \cup \{x\}| = |S| + 1$.  So letting $v(T) = \rank(S) - |S|$ as above, we need to show that
\begin{itemize}
\item If $S \cup \{x\}$ is not a simple extension of $S$, then $v(S \cup \{x\}) = v(S)$
\item If $S \cup \{x\}$ is a safe simple extension of $S$, then $v(S \cup \{x\})  = v(S) + 1$.
\item If $S \cup \{x\}$ is a simple extension of $S$ that is not safe, then $v(S \cup \{x\}) > v(S) + 1$.
\end{itemize}
But these are all obvious from the definitions of simple extensions and safe simple extensions.
\end{proof}
\begin{definition}
A cellset $S$ is \emph{closed} if it has the property that for every interior vertex $v$ of $M$,
if three of the four cells around $v$ are in $S$, then so is the fourth.  The smallest closed set
containing $S$ is the \emph{closure of $S$}, denoted $\overline{S}$.
\end{definition}
\begin{figure}
\centering
\def \svgwidth{2in}
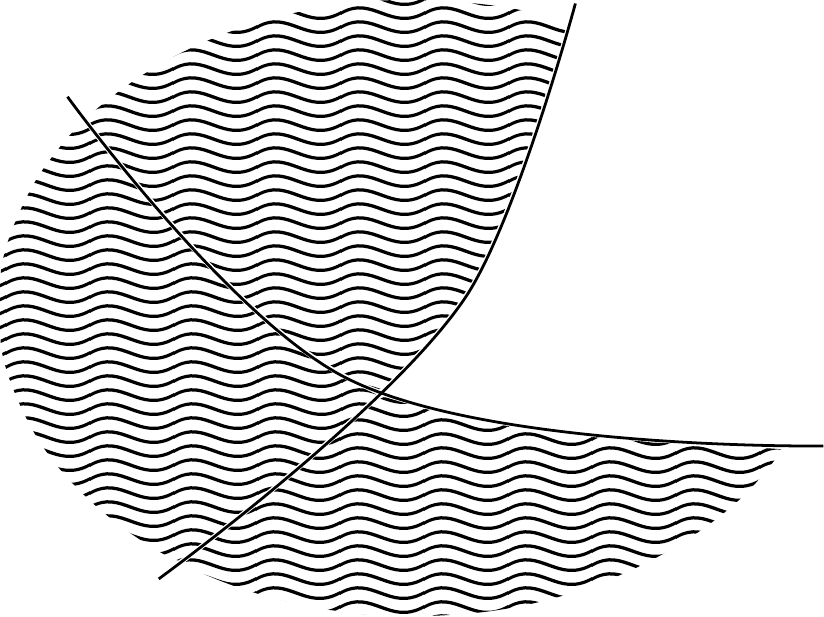
\caption{The forbidden configuration in a closed set.  A cellset is closed exactly when this configuration never occurs.}
\label{fig15-closed-forbidden}
\end{figure}
The closure $\overline{S}$ exists because the intersection of closed cellsets is closed.
%Note that $S$ is closed if and only if it has no simple extensions.
\begin{lemma}\label{closure-facts}
A cellset $S$ is closed if and only if it has no simple extensions.  If $S'$ is a simple
extension of $S$, then $\overline{S'} = \overline{S}$.  Also, there is a chain of cellsets
\[ S = S_1 \subset S_2 \subset \cdots \subset S_n = \overline{S} \]
where $S_{i+1}$ is a simple extension of $S_i$ for each $i$.  Moreover $\rank(\overline{S}) \le \rank(S)$,
with equality if and only if $S_{i+1}$ is a safe simple extension of $S_i$ for each $i$.
\end{lemma}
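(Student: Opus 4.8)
The plan is to deduce all four assertions from the definitions together with Lemma~\ref{rank-and-extension}, handling them in order. First I would dispatch the equivalence ``closed $\iff$ no simple extensions'' directly: a simple extension $S \cup \{x\}$ exists precisely when some interior vertex $v$ has three of its surrounding cells in $S$ while the fourth cell $x$ lies outside $S$, and this is exactly the negation of the closure condition. Hence $S$ is closed if and only if no such $v$, and therefore no simple extension, exists.

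The key step is the second assertion, that a simple extension $S' = S \cup \{x\}$ has the same closure as $S$. Since $S \subseteq S'$, monotonicity of closure gives $\overline{S} \subseteq \overline{S'}$. For the reverse inclusion I would observe that $x$ is the fourth cell around some interior vertex $v$ whose other three surrounding cells lie in $S \subseteq \overline{S}$; since $\overline{S}$ is closed, it must also contain $x$. Thus $S' \subseteq \overline{S}$, and applying closure once more (using that $\overline{S}$ is already closed) yields $\overline{S'} \subseteq \overline{S}$, giving equality. This is the crux of the argument: it guarantees that repeatedly performing simple extensions never overshoots the closure.

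With this in hand the chain is produced greedily. Starting from $S = S_1$, as long as $S_i$ is not closed it admits a simple extension $S_{i+1}$ by the first assertion; by the second assertion $\overline{S_{i+1}} = \overline{S_i} = \overline{S}$, so in particular $S_{i+1} \subseteq \overline{S}$. Because each step strictly increases cardinality while all the $S_i$ sit inside the finite set $\overline{S}$, the process must halt at some $S_n$ with no simple extension. By the first assertion $S_n$ is closed; since it contains $S$ and is contained in $\overline{S}$, it equals $\overline{S}$, so the chain terminates at the closure as required.

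Finally, the rank statement follows by telescoping Lemma~\ref{rank-and-extension}: each simple-extension step satisfies $\rank(S_{i+1}) \le \rank(S_i)$, so chaining gives $\rank(\overline{S}) \le \rank(S)$. Since $\rank$ is non-increasing along the chain, the two endpoints agree if and only if every intermediate inequality is an equality, which by the same lemma occurs exactly when each $S_{i+1}$ is a safe simple extension of $S_i$. The only genuinely delicate point is the second assertion; everything else is bookkeeping with finiteness and the already-established rank behaviour of a single simple extension.
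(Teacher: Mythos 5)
Your proof is correct and follows essentially the same route as the paper's: the first claim from the definitions, the second via $\overline{S} \subseteq \overline{S'}$ together with $S' \subseteq \overline{S}$ (you usefully spell out why $x \in \overline{S}$, which the paper dismisses as clear), the chain by greedily extending until no simple extension remains, and the rank statement by telescoping Lemma~\ref{rank-and-extension}. No gaps; the only difference is that you make the finiteness-based termination and the membership $x \in \overline{S}$ explicit where the paper leaves them implicit.
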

\begin{proof}
The first claim is obvious.  The second claim follows because $S \subset S'$ implies that
$\overline{S} \subseteq \overline{S'}$, but conversely $S'$ is clearly a subset
of $\overline{S}$ and so $\overline{S'} \subseteq \overline{S}$ because $\overline{S'}$ is the smallest
closed subset containing $\overline{S}$.

For the third claim, inductively define $S_{i+1}$ to be any simple
extension of $S_i$, stopping at $S_n$ which has no simple extension.  Then $S_n$ is closed by the first
claim, and $S_n = \overline{S_n} = \overline{S}$ by the second claim.

For the final claim, we know by the previous lemma that $\rank(S_{i+1}) \le \rank(S_i)$ for every $i$.
Then by transitivity $\rank(\overline{S}) \le \rank(S)$, and if equality holds then
$\rank(S_{i+1}) = \rank(S_i)$ for every $i$, so that each step in the chain is a safe simple extension.
\end{proof}

The significance of rank and closure lies in the following:
\begin{theorem}\label{safe-sets-work}
Let $S$ be a cellset and $\phi$ be a labelling of $S$.  If $S'$ is a simple extension of $S$,
then $\phi$ extends to a labelling of $S'$ in at most one way, and if $S'$ is a safe simple extension
of $S$, then $\phi$ extends in exactly one way.  Consequently, $\phi$ extends to a labelling of $\overline{S}$
in at most one way, and if $\rank(\overline{S}) = \rank(S)$, then it extends in exactly one way.
\end{theorem}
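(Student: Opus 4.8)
The plan is to prove the single-extension statements first and then bootstrap to the closure by induction along the chain furnished by Lemma~\ref{closure-facts}. First I would take a simple extension $S' = S \cup \{x\}$, witnessed by an interior vertex $v$ whose four surrounding cells consist of $x$ together with three cells already in $S$. The consistency equation (\ref{consistency-equation}) at $v$ is a single relation among the values of $\phi$ on those four cells; since three of the values are already pinned down by $\phi|_S$, and $\gamma_v$ is a bijection, that equation forces a unique value for $\phi(x)$ --- by inverting $\gamma_v$ when $x$ is a black cell and by evaluating $\gamma_v$ directly when $x$ is a white cell. Any labelling of $S'$ extending $\phi$ must assign $x$ this one forced value, which immediately gives the ``at most one way'' assertion, regardless of safety.

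Next I would treat the safe case. Setting $\phi(x)$ equal to the forced value, I must check that the result is genuinely a labelling of $S'$, i.e.\ that (\ref{consistency-equation}) holds at every interior vertex all four of whose cells lie in $S'$. Such a vertex either has all four cells in $S$ --- in which case the equation already held because $\phi$ was a labelling of $S$ --- or else has $x$ as one of its cells and its remaining three cells in $S' \setminus \{x\} = S$. A vertex of the latter kind is precisely a vertex witnessing that $S'$ is a simple extension of $S$, and \emph{safety} says that $v$ is the only such vertex. Since $\phi(x)$ was chosen to satisfy (\ref{consistency-equation}) at $v$, every new constraint is met, so the extension exists; combined with the previous paragraph it is unique, giving ``exactly one way.''

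Finally I would deduce the closure statement. Lemma~\ref{closure-facts} supplies a chain $S = S_1 \subset S_2 \subset \cdots \subset S_n = \overline{S}$ in which each $S_{i+1}$ is a simple extension of $S_i$. Applying the single-step ``at most one'' result along the chain and composing the uniquenesses shows that $\phi$ extends to $\overline{S}$ in at most one way. If moreover $\rank(\overline{S}) = \rank(S)$, then Lemma~\ref{closure-facts} tells us that every step $S_i \subset S_{i+1}$ is in fact a \emph{safe} simple extension, so the single-step ``exactly one'' result applies at each stage; by induction $\phi$ extends to $\overline{S}$ in exactly one way.

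The only step with real content is the existence half of the safe-extension case: the whole argument hinges on the observation that safety is exactly the combinatorial condition guaranteeing that assigning $x$ its forced value introduces no consistency equation other than the one at $v$. Bijectivity of $\gamma_v$ does the rest of the work at a single vertex, and the passage to the closure is then routine induction, since all the rank bookkeeping needed to know when every step is safe has already been packaged into Lemma~\ref{closure-facts}.
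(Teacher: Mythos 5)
Your proposal is correct and follows essentially the same route as the paper: a single consistency equation at the witnessing vertex forces $\phi(x)$ via bijectivity of $\gamma_v$, safety is exactly uniqueness of that witnessing vertex (hence no conflicting constraint, giving existence), and the closure statement follows by induction along the chain of simple extensions from Lemma~\ref{closure-facts}, with rank equality forcing every step to be safe. If anything, you spell out the existence verification in the safe case more explicitly than the paper does.
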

\begin{proof}
First note that Equation (\ref{consistency-equation}) above has the property that any three of $\phi(w), \phi(x), \phi(y)$, and $\phi(z)$ uniquely
determine the fourth, because we stipulated that $\gamma_e$ be bijective.

If $S' = S \cup \{x\}$, then by definition of simple extension, there is some interior vertex $v$ of the medial graph such that
$x$ is one of the four cells around $v$, and the other three are already in $S$.  Then $\phi(x)$ is completely determined by (\ref{consistency-equation})
at the edge corresponding to $v$.  Moreover, if $S \cup \{x\}$ is a safe simple extension, then $v$ is the unique vertex which determines $\phi(x)$,
so $\phi(x)$ is uniquely determined.  Thus a labelling of $S$ extends to a labelling of $S'$ in at most one way, and at least one way if $S'$
is a safe simple extension of $S$.

The statements about extending $\phi$ to $\overline{S}$ then follow easily from the previous lemma.
\end{proof}

\begin{definition}
A cellset $S$ is \emph{safe} if $\rank(\overline{S}) = \rank(S)$.
\end{definition}
We have just shown that if $S$ is safe, then a labelling of $S$ extends to a labelling of $\overline{S}$ in a unique way.

\section{Convexity in Medial Graphs}\label{sec:convexity}
From now on we will assume that our medial graph $M$ is critical, so that each geodesic starts and ends on the boundary,
no geodesic intersects itself, and no two geodesics intersect more than once.  The results of this section are completely
independent of conductances, and are entirely combinatorial in nature.  Since critical medial graphs are equivalent to simple pseudoline arrangements,
our results can all be viewed as statements about pseudoline arrangements, but we will prefer to work with medial graphs (which contain a boundary curve)
to simplify some proofs. % Simple pseudoline arrangements are in turn related to oriented matroids of rank 2, so these results are probably equivalent
%to known results in that field.

If $g$ is a geodesic in a medial graph, the Jordan curve theorem implies that $g$ divides the medial graph into two parts, which we call
\emph{pseudo halfplanes}.  We can identify these with cellsets.
\begin{definition}
A cellset $S$ is \emph{convex} if it is an intersection of zero or more pseudo halfplanes.
The \emph{convex closure} of a cellset $S$ is the smallest convex set containing
$S$, i.e., the intersection of all half-planes containing $S$.
\end{definition}
Since half-planes are closed, so are all convex sets.  We will roughly show the converse.  In doing this, we will show that our notion
of convexity has many of the usual properties of convexity.  Many of our results would be obvious if our medial graphs were always equivalent to medial graphs
drawn with straight lines, but this is not always the case.\footnote{On page 101 of \cite{Ringel}, Ringel gives a simple pseudoline arrangement which cannot be drawn
with straight lines.  It is obtained by perturbing the Pappus configuration.}

\begin{definition}
If $a$ and $b$ are two cells in a medial graph, we say that $a$ and $b$ are \emph{adjacent} if they share an edge.  A \emph{path}
between $a$ and $b$ is a sequence of cells $a = x_0, x_1, \ldots, x_n = b$ for $n \ge 0$, with $x_i$ adjacent to $x_{i+1}$ for each $i$.
The number $n$ is the \emph{length} of the path.  A \emph{minimal path} between two cells is a path of minimal length.  The \emph{distance between $a$
and $b$}, denoted $\dist(a,b)$, is defined to be the length of a minimal path between $a$ and $b$.
A cellset $S$ is \emph{connected}
if for every $a$ and $b$ in $S$, there is a path between $a$ and $b$ containing only cells in $S$. The \emph{connected components} of $S$
are the maximal connected subsets of $S$.
\end{definition}
Each cellset is the disjoint union of its connected components.  The medial graph as a whole is always connected, so that $\dist(a,b)$ is well-defined.

\begin{lemma}\label{connectedness-preserved}
If $S$ is a connected cellset, then so is $\overline{S}$.
\end{lemma}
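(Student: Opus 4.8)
The plan is to reduce the claim to a single step of a simple extension, using the chain decomposition of the closure provided by Lemma~\ref{closure-facts}. That lemma produces a chain
\[ S = S_1 \subset S_2 \subset \cdots \subset S_n = \overline{S} \]
in which each $S_{i+1}$ is a simple extension of $S_i$. Consequently it suffices to prove that a simple extension of a connected cellset is again connected; the result then follows by induction on $i$, with the base case $S_1 = S$ connected by hypothesis.

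For the inductive step, I would take $S_{i+1} = S_i \cup \{x\}$ to be a simple extension of a connected set $S_i$. By the definition of simple extension there is an interior vertex $v$ such that $x$ is one of the four cells meeting at $v$ and the other three already lie in $S_i$. Labelling these four cells in counterclockwise cyclic order around $v$, the key geometric observation is that two cells consecutive in this cyclic order share an edge emanating from $v$, and hence are adjacent, whereas the two diagonal pairs meet only at the single point $v$. Since $x$ is the \emph{only} one of the four cells not already in $S_i$, both of its cyclic neighbors belong to $S_i$. Thus $x$ is adjacent to a cell of $S_i$, and attaching $x$ to the connected set $S_i$ along that shared edge preserves connectedness. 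This shows $S_{i+1}$ is connected and completes the induction.

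The one point that genuinely needs care — and which I expect to be the main (if modest) obstacle — is justifying the geometric observation, in particular the implicit claim that the four cells meeting at $v$ are distinct, so that ``the two cyclic neighbors of $x$'' is meaningful and both neighbors really do lie in $S_i$. This is exactly where colorability of the medial graph enters: because every medial graph can be colored and no cell can lie on both sides of a geodesic, the four cells around an interior vertex are distinct, with the two same-colored cells forming a diagonal pair and the two oppositely-colored cells being the cyclic neighbors. Granting this, the adjacency of consecutive cells is immediate from the local picture at a transversal crossing, and the argument above goes through with no remaining difficulty.
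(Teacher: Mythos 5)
Your proposal is correct and follows exactly the paper's argument: the paper likewise invokes Lemma~\ref{closure-facts} to write $\overline{S}$ as a chain of simple extensions of $S$ and then observes that a simple extension of a connected cellset is connected (a step the paper dismisses as obvious, which you instead verify in detail). Your distinctness worry, incidentally, resolves even more simply than via colorability: since $x \notin S_i$ while its two cyclic neighbors around $v$ lie in $S_i$, the new cell is automatically distinct from, and adjacent to, a cell of $S_i$.
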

\begin{proof}
Because $\overline{S}$ can be obtained from $S$ by a series of simple extensions (by Lemma~\ref{closure-facts}), it suffices to show
that a simple extension of a connected cellset is connected, which is obvious.
\end{proof}

\begin{definition}
Let $g$ be a geodesic, and $a$, $b$ be cells.  We say that $g$ \emph{separates} $a$ and $b$ if $a$ and $b$ are on opposite sides of $g$.
We let $\dist'(a,b)$ denote the number of geodesics which separate $a$ and $b$.  If
$S$ is a cellset, we say that $g$ \emph{separates cells of $S$} if there exist $a, b \in S$ on opposite sides of $g$.
\end{definition}
Clearly, $\dist'(a,b) \le \dist(a,b)$.

\begin{theorem}\label{convex-connected}
Let $S$ be a nonempty convex cellset.  Then there is a critical medial graph $M'$ whose cells are the same as the cells in $S$.
In particular, $S$ is connected
and the boundary of the closure of the union of the cells in $S$ is a Jordan curve.  %Additionally, if $a, b \in S$, then $\dist'(a,b)$ is the same
%whether calculated in $M$ or in $M'$.
Additionally, the geodesics in $M'$ are in one to one correspondence with the geodesics of $M$ which separate cells of $S$.
\end{theorem}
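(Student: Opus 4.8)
The plan is to write $S = H_1 \cap \cdots \cap H_k$ as an intersection of $k$ pseudo halfplanes and to induct on $k$, reducing the entire statement to a single geometric operation: cutting a critical medial graph along one of its own geodesics. The base case $k = 0$ is immediate, since then $S$ is all of $M$, which is critical by our standing assumption; moreover every geodesic of $M$ runs from boundary to boundary and so has cells on both sides, hence separates cells of $S = M$. For the inductive step I would set $S' = H_1 \cap \cdots \cap H_{k-1}$, note that $S \subseteq S'$ forces $S'$ to be nonempty, and invoke the inductive hypothesis to obtain a critical medial graph $M''$ whose cells are exactly those of $S'$ and whose geodesics correspond exactly to the geodesics of $M$ that separate cells of $S'$.

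The heart of the argument is a single-cut claim that I would prove directly: if $N$ is a critical medial graph and $g$ is one of its geodesics, then either side $H$ of $g$, viewed as a cellset, is the cell set of a critical medial graph $N'$ whose boundary curve is a Jordan curve and whose geodesics correspond exactly to the geodesics of $N$ that separate cells of $H$. Since $N$ is critical, $g$ is a simple arc meeting the boundary curve $C$ of $N$ only at its two endpoints $p, q$; these split $C$ into two arcs, and $g$ together with the arc on the $H$ side bounds a region $R$ whose boundary is a Jordan curve, and whose interior cells are precisely those in $H$. For any other geodesic $h$ of $N$, criticality forces $h$ to cross $g$ at most once, so $h \cap R$ is a \emph{single} arc with both endpoints on $\partial R$: if $h$ avoids $g$ and lies in $R$, it runs from $C$ to $C$; if $h$ crosses $g$ once, it runs from its $H$-side endpoint on $C$ to the crossing point on $g$. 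Each such arc is a geodesic of $N'$ running from boundary to boundary, and transversality, the absence of triple points, and the at-most-one-crossing condition are all inherited from $N$, so $N'$ is critical. Finally $g$ itself becomes part of $\partial R$ and so is not a geodesic of $N'$, and a geodesic $h \neq g$ contributes to $N'$ precisely when it has cells of $H$ on both sides, i.e. precisely when it separates cells of $H$.

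With the single-cut claim in hand I would complete the induction. If $g_k$ (the geodesic defining $H_k$) does not separate cells of $S'$, then nonemptiness of $S = S' \cap H_k$ forces $S' \subseteq H_k$ and hence $S = S'$, so $M' = M''$ works. Otherwise $g_k$ separates cells of $S'$, so by the inductive correspondence $g_k$ is a geodesic of $M''$; since the cells of $M''$ are exactly the cells of $S'$ and the side of a cell relative to $g_k$ is unchanged on passing to the sub-medial-graph, the $H_k$-side of $g_k$ inside $M''$ is exactly $S' \cap H_k = S$. Applying the single-cut claim to $M''$ and $g_k$ then produces a critical $M'$ with cell set $S$ and boundary a Jordan curve. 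For the geodesic correspondence I would observe that a geodesic of $M$ separating cells of $S$ automatically separates cells of $S' \supseteq S$ (so is a geodesic of $M''$) and is not $g_k$ (since all of $S$ lies on one side of $g_k$), and conversely; thus the geodesics of $M'$ are exactly the geodesics of $M$ separating cells of $S$. The remaining ``in particular'' assertions are then immediate: $S$ is connected because the cells of any medial graph form a connected cellset (the open region bounded by the boundary curve is connected, and a transverse interior path crosses geodesics into successively adjacent cells), and the boundary of the closure of the union of the cells in $S$ is exactly the boundary curve of $M'$, hence a Jordan curve.

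I expect the main obstacle to be the single-cut claim, and within it the point where criticality is genuinely used: verifying that each other geodesic restricts to a \emph{single} arc joining two points of the new boundary curve, which fails without the at-most-one-crossing and no-self-intersection hypotheses. The surrounding induction is largely bookkeeping, with the only delicate routine check being that cell-sidedness relative to $g_k$ is preserved in passing from $M$ to the sub-medial-graph $M''$.
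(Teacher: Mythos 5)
Your proposal is correct and follows essentially the same route as the paper: an induction on the number of pseudo halfplanes whose engine is the observation that one side of a geodesic in a critical medial graph is again a critical medial graph (you cut by the last halfplane after applying the inductive hypothesis, whereas the paper peels off the first halfplane and views the remaining ones as halfplanes of the smaller medial graph, but this is the same idea unrolled in the opposite order). The only difference is thoroughness: you actually prove the single-cut claim and the geodesic correspondence, which the paper justifies only by pointing to Figure~\ref{fig16-halfplane-and-medial-graph} and by leaving the ``additionally'' clause as an exercise, and you explicitly handle the degenerate case where a defining geodesic fails to separate cells of the partial intersection.
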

\begin{proof}
If $H$ is a pseudo halfplane, then we can make a medial graph out of the cells in $H$, as shown in Figure~\ref{fig16-halfplane-and-medial-graph}.
This new medial graph will still be critical.
Moreover, if $H'$ is any other
pseudo halfplane, then $H' \cap H$ is a pseudo halfplane in the new medial graph, by criticality.  We show by induction on $r$ that
if $M$ is a critical medial graph and $S$ is the intersection of $r$ pseudo halfplanes in $M$, then $S$ is the set of cells of a critical medial graph.
The base case where $r = 0$ is trivial.  Otherwise, write $S$ as $H \cap S'$, where $H$ is a half-plane, and $S' = \bigcap_{i = 1}^{r-1} H_i$ for pseudo halfplanes
$H_i$.  Then we can consider $H$ as a medial graph $M'$, and $H \cap H_i$ is a pseudo halfplane of $M'$ for each $i$.  By induction,
$S = H \cap S' = \bigcap_{i = 1}^{r-1} (H \cap H_i)$ is the set of cells of a critical medial graph, and we are done.
%
%It remains to show that for $a, b \in S$, $\dist'(a,b)$ is preserved by discarding the complement of $S$.
%By induction we only need to consider the case where $S$ is a pseudo halfplane,
%which is easy, using criticality.
We leave the second claim as an exercise to the reader.
\end{proof}
\begin{figure}
\centering
\def \svgwidth{5in}
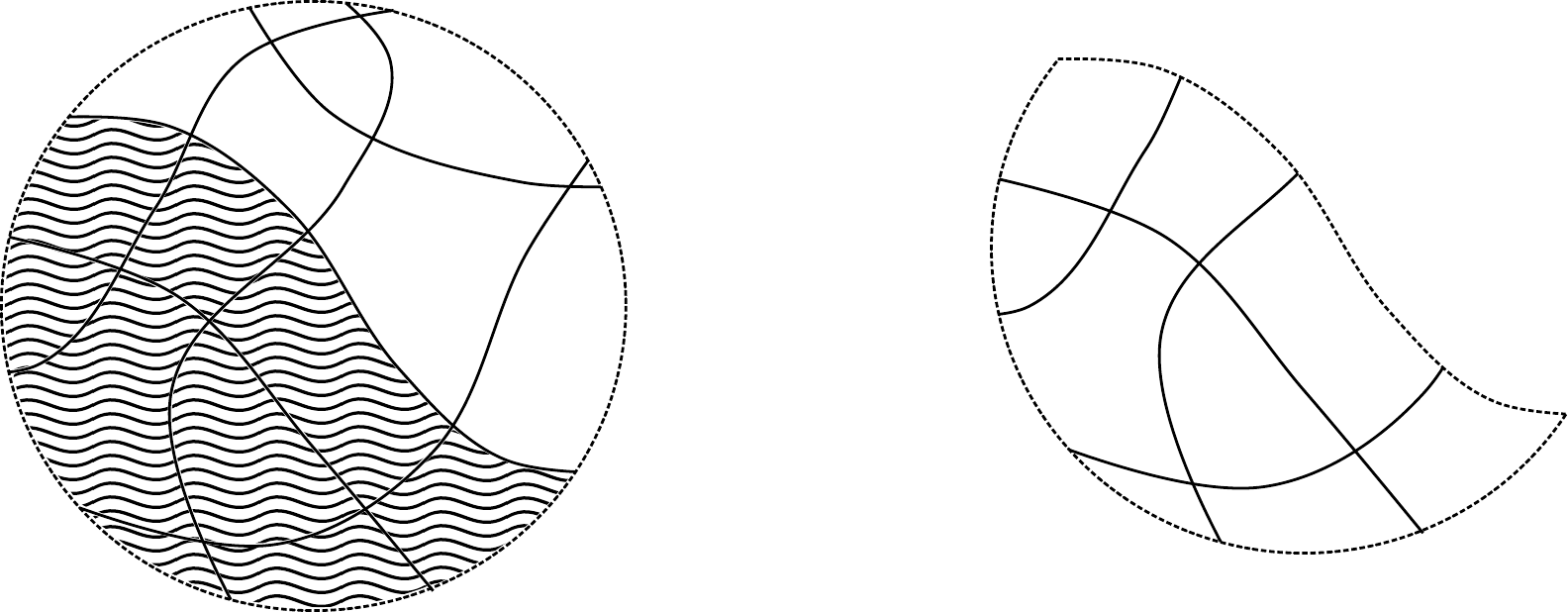
\caption{A pseudo halfplane (left), considered as a medial graph in its own right (right).}
\label{fig16-halfplane-and-medial-graph}
\end{figure}

%TODO: the following corollary is also a lemma in Branko Gr\"unbaum's \emph{Arrangements and Spreads}.
\begin{corollary}\label{distance-by-geodesics}
If $a$ and $b$ are two cells of a critical medial graph $M$, then $\dist'(a,b) = \dist(a,b)$.
If $c$ is a third cell, then $c$ is on a minimal path between $a$ and $b$ if and only if $c$ is in the convex closure of $\{a,b\}$.
\end{corollary}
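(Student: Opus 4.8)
The plan is to reduce the entire corollary to the single equality $\dist(a,b) = \dist'(a,b)$, since the characterization of minimal paths then follows from elementary facts about shortest paths together with the combinatorics of the sign-vectors $\operatorname{sep}_g(a,b) \in \{0,1\}$ recording whether the geodesic $g$ separates $a$ and $b$ (so that $\dist'(a,b) = \sum_g \operatorname{sep}_g(a,b)$). Recall that $\dist'(a,b) \le \dist(a,b)$ has already been noted. Let $S$ be the convex closure of $\{a,b\}$. The key preliminary observation is that a geodesic $g$ separates cells of $S$ if and only if it separates $a$ and $b$: if $g$ does not separate $a$ and $b$, the side of $g$ containing both is a pseudo halfplane containing $\{a,b\}$, hence containing $S$, so $g$ separates no cells of $S$; the converse is immediate since $a,b \in S$. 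Thus Theorem~\ref{convex-connected} presents $S$ as the cell set of a critical medial graph $M'$ whose interior geodesics are exactly the $\dist'(a,b)$ geodesics separating $a$ and $b$ — so in $M'$ \emph{every} interior geodesic separates $a$ and $b$.

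I would then prove $\dist(a,b) = \dist'(a,b)$ by induction on $k = \dist'(a,b)$. For the base case $k = 0$ the graph $M'$ has no geodesics, hence a single cell; since $a,b \in S$ this forces $a = b$ and $\dist(a,b) = 0$. For $k \ge 1$, the graph $M'$ is connected with at least two cells, so $a$ has a neighbour $a'$ in $M'$; the edge they share lies on some interior geodesic $g$ of $M'$, which therefore separates $a$ and $b$. Crossing $g$ from $a$ to $a'$ flips only the side of $g$, so $\dist'(a',b) = k-1$, and by the inductive hypothesis $\dist(a',b) = k-1$. Hence $\dist(a,b) \le 1 + \dist(a',b) = k = \dist'(a,b) \le \dist(a,b)$, giving equality.

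With the equality established, I would extract the refinement that a minimal path crosses each separating geodesic exactly once and no other geodesic at all: a minimal path has length $\dist'(a,b)$, each step crosses exactly one geodesic, each separating geodesic is crossed an odd number of times and each non-separating one an even number of times, and since the total number of steps equals the number of separating geodesics, every separating geodesic is crossed once and every non-separating geodesic zero times. Consequently a minimal path never leaves a pseudo halfplane containing both $a$ and $b$, so it lies inside $S$; this gives the forward direction of the second claim. For the reverse direction I would run a betweenness computation: a cell $c$ lies in $S$ exactly when $c$ is on the common side of every geodesic having $a$ and $b$ on the same side, and a short case check shows this is equivalent to $\operatorname{sep}_g(a,b) = \operatorname{sep}_g(a,c) + \operatorname{sep}_g(c,b)$ for every $g$, which upon summing is equivalent to $\dist'(a,c) + \dist'(c,b) = \dist'(a,b)$. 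By the equality already proved this reads $\dist(a,c) + \dist(c,b) = \dist(a,b)$, which is precisely the condition that $c$ lie on a minimal path between $a$ and $b$ (concatenate minimal $a$--$c$ and $c$--$b$ paths).

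I expect the main obstacle to be the inductive step, specifically the guarantee that $a$ has a neighbour across a \emph{separating} geodesic. The naive approach argues directly in $M$ that some geodesic bounding $a$ must separate it from $b$, but this is exactly the delicate topological fact that is not evident for pseudoline arrangements failing to be stretchable. Routing the argument through $M' = S$, where by Theorem~\ref{convex-connected} every interior geodesic automatically separates $a$ and $b$, is what makes the step clean, since then \emph{any} neighbour of $a$ in $M'$ suffices. The only care required is to confirm that adjacency and the relation ``shares an edge lying on a separating geodesic'' are inherited correctly from $M'$ back to $M$, which follows from the halfplane-medial-graph construction used in the proof of Theorem~\ref{convex-connected}.
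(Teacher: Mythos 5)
Your proof is correct and follows essentially the same route as the paper: induction on $\dist'(a,b)$ using connectedness of the convex closure $S$ (Theorem~\ref{convex-connected}) to find a neighbour $a'$ of $a$ whose separating geodesic also separates $a$ from $b$, and then the pointwise subadditivity $\sigma_g(a,b) \le \sigma_g(a,c) + \sigma_g(c,b)$ summed over geodesics for the betweenness claim. The only cosmetic difference is that you justify the key inductive step by citing the geodesic correspondence in the induced medial graph $M'$ (the ``Additionally'' clause of Theorem~\ref{convex-connected}), whereas the paper argues directly in $M$ that the halfplane containing $a$ and $b$ would otherwise exclude $a' \in S$.
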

\begin{proof}
We proceed by induction on $\dist'(a,b)$.
If $a$ and $b$ are the same cell, then clearly $\dist(a,b) = \dist'(a,b) = 0$.  Assume therefore that $a \ne b$.
Let $S$ be the convex closure of $\{a,b\}$.
By the theorem, $S$ is connected.
Since $S$ is connected, there is a path from $a$ to $b$ in $S$, and in particular there is some cell $a' \in S$ which
is adjacent to $a$.  Let $g$ be the geodesic which separates $a'$ from $a$.  Then $g$ also separates $a$ from $b$. Otherwise, the pseudo halfplane
$H$ on the side of $g$ containing $a$ and $b$ would not contain $a'$, so $a' \notin S$.  Since $g$ separates $a$ from $b$ and $a$ from $a'$,
it follows that $\dist'(a',b) = \dist'(a,b) - 1$.  By induction, $\dist'(a,b) = \dist'(a',b) + 1 = \dist(a',b) + 1$.  Therefore,
\[ \dist'(a,b) \le \dist(a,b) \le \dist(a,a') + \dist(a',b) = \dist(a',b) + 1 = \dist'(a,b),\]
so the desired equality holds.

For the second claim, note that $c$ is on a minimal path between $a$ and $b$ if and only if $\dist(a,b) = \dist(a,c) + \dist(c,b)$.
Define $\sigma_g(x,y)$ for cells $x, y$ and geodesic $g$ to be 1 if $g$ separates $a$ from $b$, and 0 otherwise.  Then the
first claim of this corollary implies that $\dist(x,y) = \sum_g \sigma_g(x,y)$.  Now any geodesic which separates $a$ from $b$ must separate
$a$ from $c$ or $c$ from $b$, so
\[ \sigma_g(a,b) \le \sigma_g(a,c) + \sigma_g(c,b).\]
Summing over $g$, we see that $\dist(a,b) \le \dist(a,c) + \dist(c,b)$, with equality if and only if $\sigma_g(a,b) = \sigma_g(a,c) + \sigma_g(c,b)$
for every geodesic $g$.  Thus $c$ is on a minimal path from $a$ to $b$ if and only if for every geodesic $g$,
either $g$ does not separate any of $a,b,c$, or $g$ separates $a$ from $b$.  Equivalently, $c$ is on a minimal path from $a$ to $b$ if and
only if $c$ is in every half plane which contains both $a$ and $c$.  But this is just the convex closure of $\{a,b\}$.
\end{proof}

\begin{corollary}\label{cells-are-nice}
Every cell in a critical medial graph has a boundary that is a Jordan curve.
\end{corollary}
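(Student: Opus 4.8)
The plan is to realize each individual cell as a convex cellset and then invoke Theorem~\ref{convex-connected}, whose final assertion already guarantees that the boundary of the closure of the union of the cells of a nonempty convex cellset is a Jordan curve. Applied to a singleton cellset $\{c\}$, the union of cells is just the cell $c$ itself, so this specializes to exactly the desired statement. Thus essentially all the work has been done already, and the corollary is a matter of checking that a single cell is convex.

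First I would fix a cell $c$ and show that the singleton $\{c\}$ is convex, i.e. that it equals its own convex closure. The convex closure of $\{c\}$ is the intersection of all pseudo halfplanes containing $c$; in particular, for each geodesic $g$ it is contained in the pseudo halfplane on the side of $g$ where $c$ lies. I would then argue that no other cell survives this intersection. Let $d$ be any cell with $d \ne c$. Since distinct cells have $\dist(c,d) \ge 1$, Corollary~\ref{distance-by-geodesics} gives $\dist'(c,d) = \dist(c,d) \ge 1$, so at least one geodesic $g$ separates $c$ from $d$. Then $d$ lies in the pseudo halfplane of $g$ opposite to the one containing $c$, and hence $d$ is excluded from the intersection defining the convex closure. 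Therefore the convex closure of $\{c\}$ is $\{c\}$, so $\{c\}$ is convex.

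With convexity established, Theorem~\ref{convex-connected} applies to the nonempty convex cellset $S = \{c\}$. Its concluding clause states that the boundary of the closure of the union of the cells in $S$ is a Jordan curve. Since that union is precisely the cell $c$, its boundary is a Jordan curve, which is the claim of the corollary.

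The only genuine content is the convexity step, and its main (and rather mild) obstacle is verifying that distinct cells are actually separated by some geodesic, i.e. that a cell is pinned down by its position relative to all the geodesics. I expect this to be immediate from Corollary~\ref{distance-by-geodesics} together with the elementary observation that distinct cells are at distance at least one, so no separate combinatorial argument about the sign vectors of a pseudoline arrangement is required. The remaining steps are purely a matter of unwinding the definitions of convex closure and pseudo halfplane.
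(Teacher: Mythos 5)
Your proof is correct and takes essentially the same route as the paper: both realize the cell as a convex cellset (the convex closure of the singleton) and then read off the Jordan-curve conclusion from Theorem~\ref{convex-connected}. The only difference is in how the singleton is pinned down --- the paper excludes merely the cells adjacent to $a$ (where the separating geodesic is immediate from the shared edge) and then uses connectedness of the convex closure to force $S=\{a\}$, whereas you exclude every other cell at once via Corollary~\ref{distance-by-geodesics}; this is legitimate, since that corollary is established before this one and its proof does not rely on it.
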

\begin{proof}
Let $a$ be a cell, and let $S$ be the convex closure of $\{a\}$.  If $b$ is any adjacent cell to $a$,
then some geodesic $g$ separates $a$ from $b$.  Then there is a pseudo halfplane $H$ which contains $b$ but not
$a$, so that $b \notin S$.  Thus $S$ contains no cells adjacent to $a$.  But by Theorem~\ref{convex-connected}, $S$ is connected, so
$S = \{a\}$.  It then follows that $a$ itself must have the shape of a medial graph, and in particular its boundary must be a piecewise
smooth Jordan curve.
\end{proof}

\begin{definition}
Let $(x_1,x_2,\ldots,x_n)$ and $(y_1,y_2,\ldots,y_n)$ be two paths from $a$ to $b$.  We say that $(y_1,\ldots,y_n)$ is a \emph{simple deformation}
of $(x_1,\ldots,x_n)$ if there is some $1 < i < n$ such that $x_j = y_j$ for $j \ne i$, and $x_{i-1}, x_{i+1}, x_i$ and $y_i$ are the four cells
surrounding some internal vertex.  We say that $(y_1,y_2,\ldots,y_n)$ is a \emph{deformation} of $(x_1,\ldots,x_n)$ if it is obtained
by a series of zero or more simple deformations.
\end{definition}

\begin{theorem}\label{path-homotopy}
If $a, b$ are cells, then all minimal paths between $a$ and $b$ are deformations of each other.
\end{theorem}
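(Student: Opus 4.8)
The plan is to induct on the distance $d = \dist(a,b)$, after first normalizing the ambient graph. By Corollary~\ref{distance-by-geodesics}, every cell on a minimal path lies in the convex closure $S$ of $\{a,b\}$, and a minimal path crosses each geodesic separating $a$ from $b$ exactly once and crosses no other geodesic. By Theorem~\ref{convex-connected}, $S$ is itself the cell set of a critical medial graph $M'$ whose cells are exactly the cells of $S$ and whose geodesics are exactly those of $M$ separating $a$ from $b$. First I would check that adjacency of cells, minimal paths, and simple deformations all agree whether computed in $M$ or in $M'$: two cells of $S$ share an $M$-edge iff the separating geodesic is one of the $M'$-geodesics, and the four cells around an interior crossing inside $S$ are the same in both pictures. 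This lets me assume without loss of generality that \emph{every} geodesic separates $a$ from $b$, so that $d$ equals the number of geodesics and also the common length of every minimal path. The base case $d \le 1$ is immediate, since then there is a unique minimal path.

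Two facts will be used throughout. First, a cell is determined by the sides of the geodesics on which it lies: if two cells lie on the same side of every geodesic, then the quantity $\dist'$ between them is $0$, so by Corollary~\ref{distance-by-geodesics} their distance is $0$ and they coincide. In particular the neighbor of $a$ obtained by crossing a fixed geodesic $g$ is unique. Second, by Corollary~\ref{cells-are-nice} the boundary $\partial a$ is a Jordan curve, so the geodesics bounding $a$ occur in a definite cyclic order $g_1, \dots, g_m$, with consecutive ones $g_i, g_{i+1}$ meeting at a corner $v_i$ of $a$ surrounded by four cells. Crossing any bounding geodesic $g_i$ lands in its unique neighbor $A_i$, and since $g_i$ separates $a$ from $b$ this drops $\dist'(\cdot, b)$ by one; hence each bounding geodesic of $a$ begins a minimal path, and the first steps of minimal paths are in bijection with the bounding geodesics of $a$.

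The crux is a sub-lemma allowing the first step to be rotated to a cyclically adjacent bounding geodesic. Fix consecutive edges $g_i, g_{i+1}$ of $\partial a$ meeting at $v_i$, with surrounding cells $a, A, D, B$ in cyclic order (so $A$ is across $g_i$, $B$ across $g_{i+1}$, and $D$ is the diagonal cell). Given a minimal path $P = (a, A, \dots, b)$, I would argue as follows. The tail from $A$ to $b$ is a minimal path of length $d-1$, so by the inductive hypothesis all minimal paths from $A$ to $b$ are deformations of one another. Since $g_{i+1}$ bounds $A$ (the cells $A$ and $D$ are adjacent across $g_{i+1}$ at $v_i$) and separates $A$ from $b$, there is a minimal path from $A$ to $b$ whose first step crosses $g_{i+1}$ into $D$; deforming the tail of $P$ into it yields a path of the form $(a, A, D, \dots)$. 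A single simple deformation at $v_i$, replacing the middle cell of $(a, A, D)$ by $B$, then produces $(a, B, D, \dots)$, whose first step crosses $g_{i+1}$. Thus $P$ deforms to a minimal path starting with $g_{i+1}$, and symmetrically with $g_{i-1}$.

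Finally I would assemble the induction. Given minimal paths $P$ and $P'$, their first steps cross bounding geodesics $g_p$ and $g_q$ of $a$. Walking around the cyclic order and applying the sub-lemma repeatedly deforms $P$ into a minimal path $\tilde P$ whose first step also crosses $g_q$; by uniqueness of the neighbor, $\tilde P$ and $P'$ now share their first two cells. Deleting this common first step leaves two minimal paths from that neighbor to $b$, of length $d-1$, which are deformations of each other by the inductive hypothesis; these tail deformations fix the first two cells and so lift to deformations of the full paths. Hence $P \rightsquigarrow \tilde P \rightsquigarrow P'$. I expect the main obstacle to be the sub-lemma: one cannot alter the first step by a purely local move at $a$, since the endpoints of a path are frozen under deformation, so the rotation of the first crossing must be bought by first reorganizing the remainder of the path through the inductive hypothesis and only then performing the corner move at $v_i$. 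The bookkeeping that makes the reduction to the convex region $M'$ legitimate, namely matching adjacency and deformations between $M$ and $M'$, is the other point demanding care.
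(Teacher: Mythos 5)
Your overall strategy coincides with the paper's: induct on $\dist(a,b)$, use Corollary~\ref{distance-by-geodesics} and Theorem~\ref{convex-connected} to reduce to the case where the convex closure of $\{a,b\}$ is the entire medial graph (so every geodesic separates $a$ from $b$), and then rotate the first step of a minimal path between cyclically adjacent neighbors of $a$ via a corner move at an interior vertex. Your rotation sub-lemma is a mild variant of the paper's final step: you re-route the tail from $A$ through the diagonal cell $D$ using the inductive hypothesis at distance $\dist(a,b)-1$ and then perform one simple deformation at $v_i$, whereas the paper observes directly that the diagonal cell $c$ satisfies $\dist(c,b) = \dist(a,b)-2$ and prepends $(a,s_i)$ or $(a,s_{i+1})$ to a common minimal path from $c$, obtaining the two rotated paths as simple deformations of each other in one stroke. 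Both versions are sound, and your uniqueness-of-neighbor observation via $\dist = \dist'$ is fine.

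The genuine gap is your claim that, because $\partial a$ is a Jordan curve (Corollary~\ref{cells-are-nice}), the geodesics bounding $a$ ``occur in a definite cyclic order $g_1,\dots,g_m$, with consecutive ones $g_i, g_{i+1}$ meeting at a corner $v_i$ of $a$ surrounded by four cells.'' That is false in general: after your reduction $a$ is a boundary cell, and $\partial a$ may contain arcs of the boundary circle. Two cyclically consecutive bounding geodesics separated by such an arc meet at no interior crossing at all (the corners there are degree-$3$ boundary vertices), so no simple deformation is available to pass between them. A single boundary arc is harmless --- the cyclic order degenerates to a linear order, within which your walk from $g_p$ to $g_q$ still goes through --- but if $a$ meets the boundary circle in two or more disjoint arcs, the bounding geodesics split into several runs, and the ``walk around the cyclic order'' cannot cross from one run to the next; your assembly step then fails for exactly the configuration of Figure~\ref{fig18-point-5-bad-cell}. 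Ruling this out is the one nontrivial combinatorial point in the paper's proof: one adds an auxiliary geodesic joining two of the boundary arcs, discards the side away from $b$, and uses the standing assumption that every geodesic separates $a$ from $b$ to conclude that nothing could have been discarded, so the two arcs were in fact one --- a contradiction. You flag the $M$-versus-$M'$ bookkeeping as the point demanding care, but that part is routine; this contiguity argument is the missing idea, and your proof is incomplete without it.
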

\begin{proof}
We proceed by induction on $\dist(a,b)$.  The base cases where $\dist(a,b) = 0$ or 1 are obvious, so assume that $\dist(a,b) > 1$.

By Corollary~\ref{distance-by-geodesics}, the union of all the minimal paths between $a$ and $b$ are the convex closure of $\{a,b\}$,
which by Theorem~\ref{convex-connected} is a medial graph itself.  So we can assume without loss of generality that the convex
closure of $\{a,b\}$ is everything, and so every geodesic separates $a$ from $b$.

Let $S$ be the set of all cells immediately adjacent to $a$.  We claim that we can enumerate the elements of $S$ as $s_1, s_2, \ldots, s_n$
in such a way that for each $i$, the three cells $a$, $s_i$, and $s_{i+1}$ meet at a vertex (with a fourth cell).  This setup is shown in Figure~\ref{fig18-contiguity}.
This is obvious
if $a$ is not a boundary cell (which probably never happens).  If $a$ is a boundary cell, then this is still clear, unless
$a$ meets the boundary of the medial graph in two or more disjoint boundary arcs, as in Figure~\ref{fig18-point-5-bad-cell}.
But in this case, we can add a new geodesic connecting two of
these boundary arcs, and throw away the side of the geodesic that is opposite $g$, making a smaller medial graph.  Any geodesic discarded by this operation
could not have separated $a$ from $b$, and could not have been present.  Consequently, the discarded part could not have contained any additional cells or
geodesics, implying that the two boundary arcs we connected were in fact the same, a contradiction.
\begin{figure}
\centering
\def \svgwidth{4in}
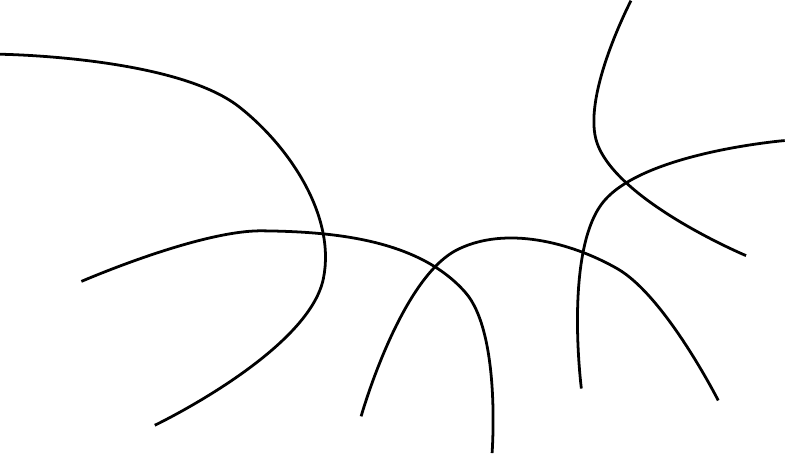
\caption{The desired setup in the proof of Theorem~\ref{path-homotopy}.  Here, we can enumerate the neighbors of $a$ in order.  In particular, they occur
contiguously.}
\label{fig18-contiguity}
\end{figure}
\begin{figure}
\centering
\def \svgwidth{2.5in}
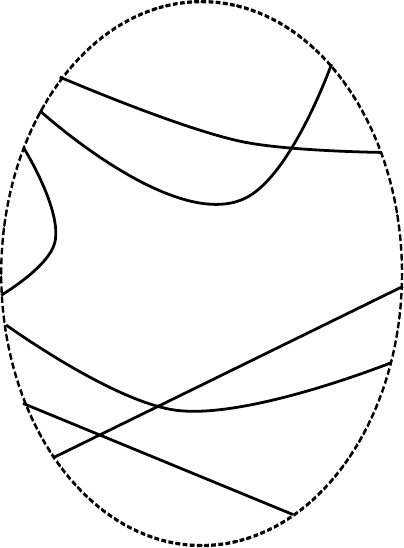
\caption{A cell whose neighbors are not contiguous.  This only occurs if the cell touches the boundary more than once, as $a$ does here.}
\label{fig18-point-5-bad-cell}
\end{figure}

So we can number the neighbors of $a$ as $s_1, s_2, \ldots, s_n$.  Any minimal path from $a$ to $b$ must begin by moving from
$a$ to one of the $s_i$.  By induction, we know that all minimal paths from $s_i$ to $b$ are equivalent through deformations.  So it suffices
to show for each $i$ that at least one path from $a$ to $b$ through $s_i$ is a deformation of a path from $a$ to $b$ through $s_{i+1}$.

To see this, let $c$ be the fourth cell meeting $a$, $s_i$, and $s_{i+1}$, as in Figure~\ref{fig19-minimals-homotopic}.
If we let $g_i$ and $g_{i+1}$ be the geodesics separating
$a$ from $s_i$ and $s_{i+1}$, respectively, then both $g_i$ and $g_{i+1}$ separate $a$ from $b$.  Consequently, $\dist(c,b) = \dist(a,b) - 2$.
Thus if we take a minimal path from $c$ to $b$, and add on $(a,s_i)$ or $(a,s_{i+1})$ to the beginning, we will get minimal paths from $a$ to $b$.
But these two alternatives are deformations of each other, so we are done.
\begin{figure}
\centering
\def \svgwidth{3.5in}
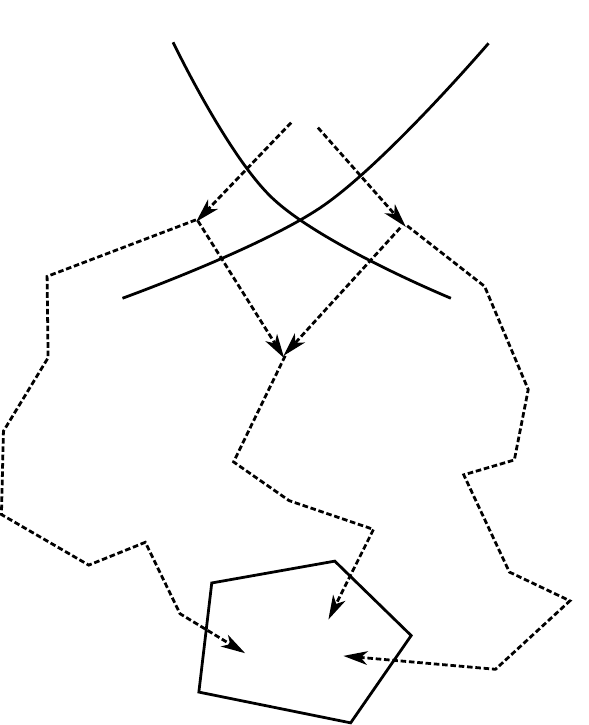
\caption{The setup of the rest of the proof of Theorem~\ref{path-homotopy}.}
\label{fig19-minimals-homotopic}
\end{figure}
\end{proof}

The following theorem gives several conditions equivalent to convexity.  %Interestingly, these seem to be completely analogous to some of the standard
%definitions of convex sets in $\mathbb{R}^2$.  Condition (a) is equivalent to the requirement that the boundary turns in the correct direction.
%Condition (c) is like the standard definition of convex sets in terms of line segments.  Condition (d) is like the separating axis theorem.
%Condition (e) is like the definition in terms of intersections
%of half-planes.
\begin{theorem}\label{tfae}
Let $S$ be a cellset.  The following are equivalent:
\begin{description}
\item[(a)] $S$ is closed and connected.
\item[(b)] $S$ is connected, and for every $a, b \in S$, if one minimal path between $a$ and $b$ is in $S$, then every minimal path between $a$ and $b$ is in $S$.
\item[(c)] If $a, b \in S$, then every minimal path from $a$ to $b$ is in $S$.
\item[(d)] If $a, b$ are adjacent cells separated by geodesic $g$, and $a \in S$ but $b \notin S$, then every cell in $S$
is on the same side of $g$ as $a$.
\item[(e)] $S$ is convex
\end{description}
\end{theorem}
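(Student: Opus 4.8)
The plan is to prove the five conditions equivalent by verifying the implications $(e)\Rightarrow(a)$, $(a)\Leftrightarrow(b)$, $(e)\Rightarrow(c)\Rightarrow(d)\Rightarrow(e)$, and the one genuinely substantial implication $(a)\Rightarrow(c)$; together these are strongly connected, and all but the last arrow are short. The cheap observations are these. For $(e)\Rightarrow(a)$: a convex set is an intersection of pseudo halfplanes, each of which is closed, so $S$ is closed, and it is connected by Theorem~\ref{convex-connected}. For $(e)\Rightarrow(c)$: by the second part of Corollary~\ref{distance-by-geodesics} the cells lying on minimal paths between $a$ and $b$ are exactly the convex closure of $\{a,b\}$, and a convex $S$ containing $a,b$ contains this convex closure, so every minimal path stays in $S$.

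For $(c)\Rightarrow(d)$ I argue by contraposition: if $a\in S$ and $b\notin S$ are adjacent across $g$ but some $c\in S$ lies on $b$'s side of $g$, then $g$ separates $a$ from $c$, while every geodesic other than $g$ treats $a$ and $b$ identically, so $\dist(a,b)+\dist(b,c)=1+(\dist(a,c)-1)=\dist(a,c)$; hence $b$ lies on a minimal path from $a$ to $c$, and $(c)$ forces $b\in S$, a contradiction. For $(d)\Rightarrow(e)$, given $c\notin S$ I pick $a\in S$ and a minimal path from $a$ to $c$; at its first exit from $S$ there are adjacent cells $u\in S$ and $u'\notin S$ separated by some $g$, and since a minimal path crosses $g$ exactly once, $c$ lies on $u'$'s side while $(d)$ puts all of $S$ on $u$'s side, so the halfplane on $u$'s side of $g$ contains $S$ but not $c$, exhibiting $S$ as an intersection of halfplanes. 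Finally $(a)\Leftrightarrow(b)$: connectedness is common to both; for $(a)\Rightarrow(b)$, if one minimal $a$--$b$ path lies in $S$ then by Theorem~\ref{path-homotopy} every other is reached by simple deformations, and at each step the new cell is the fourth cell at a vertex whose other three lie on the current in-$S$ path, so closedness keeps us inside $S$; for $(b)\Rightarrow(a)$, if three of the four cells at a vertex lie in $S$ they include an opposite pair $w,y$, the minimal path $w,x,y$ through the present third cell lies in $S$, and $(b)$ then forces the other minimal path $w,z,y$ into $S$, giving the fourth cell.

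The real work is $(a)\Rightarrow(c)$. The homotopy-plus-closedness mechanism above already shows that it is enough to produce a single minimal $a$--$b$ path inside $S$; equivalently, writing $\dist_S$ for distance measured through cells of $S$, it suffices to prove $\dist_S(a,b)=\dist(a,b)$ for all $a,b\in S$. I would do this by induction on $d=\dist(a,b)$, the step reducing to the claim that a closed connected $S$ containing $a\ne b$ has a neighbor $a'\in S$ of $a$ with $\dist(a',b)=d-1$ (a \emph{forward} neighbor), after which one recurses on $(a',b)$. To find such a neighbor I would pass to the convex closure $T$ of $\{a,b\}$, which by Theorem~\ref{convex-connected} is itself a critical medial graph and by Corollary~\ref{distance-by-geodesics} has the feature that \emph{every} one of its geodesics separates $a$ from $b$; consequently inside $T$ every neighbor of $a$ is forward, so it is enough to exhibit any neighbor of $a$ that lies in both $S$ and $T$.

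This last point is where I expect the main difficulty. Connectivity of $S$ in the whole graph only guarantees a neighbor of $a$ in $S$, which may leave $T$ and be a backward neighbor; the obstacle is to show that connectivity survives the restriction to the convex region, i.e.\ that $a$ cannot have all of its $S$-neighbors outside the forward arc. I would attack this using the contiguous cyclic enumeration $s_1,\dots,s_n$ of the neighbors of $a$ from the proof of Theorem~\ref{path-homotopy}, the fact that the forward neighbors form a single arc of that cycle, and closedness, tracing an $S$-path from $a$ to $b$ to its first crossing of a geodesic that bounds $a$ and separates $a$ from $b$. Making this crossing argument rigorous---ruling out an $S$-path that wanders away from $T$ at $a$ and only re-enters the forward region far from $a$---is the crux of the section, and is precisely the content that in the linear theory is supplied for free by straight-line convexity but must here be reconstructed for pseudoline arrangements.
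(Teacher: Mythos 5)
Your peripheral implications are all correct, and several are pleasant variants of the paper's: you get (c)$\Rightarrow$(d) by counting separating geodesics (using Corollary~\ref{distance-by-geodesics} to see that $\dist(a,c)=\dist(a,b)+\dist(b,c)$, so $b$ lies on a minimal path), where the paper instead argues with the convex closure of $\{a,c\}$ and halfplanes; your (d)$\Rightarrow$(e) via the first exit of a minimal path from $S$ is also sound, where the paper uses connectedness of the intersection of defining halfplanes. The logical cycle you set up would suffice. But there is a genuine gap, and you name it yourself: (a)$\Rightarrow$(c) is not proved. Your reduction is fine as far as it goes --- one minimal path in $S$ suffices by your (a)$\Rightarrow$(b) mechanism, and a forward neighbor of $a$ in $S$ suffices by induction on $\dist(a,b)$, and inside $T = $ the convex closure of $\{a,b\}$ every neighbor of $a$ is forward --- but the residual claim, that $a$ has a neighbor in $S \cap T$, carries essentially the full strength of the theorem (it is precisely the first step of the minimal path whose existence is at issue). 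The sketched attack does not close it: the $S$-path from $a$ to $b$ witnessing connectedness may be far longer than $\dist(a,b)$, may leave $T$ immediately, and nothing about contiguous neighbor arcs or a ``first crossing'' controls where it re-enters.

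The paper fills exactly this hole by a different induction. It proves (b)$\Rightarrow$(c), not (a)$\Rightarrow$(c) directly, and inducts on the length $n$ of an \emph{$S$-minimal} path $x_0 \to \cdots \to x_n$ rather than on $\dist(a,b)$. The induction hypothesis applies to the subpaths $x_0 \to \cdots \to x_{n-1}$ and $x_1 \to \cdots \to x_n$, showing both are truly minimal. If $x_n$ lies across the geodesic $g$ separating $x_0$ from $x_1$, one is done by Corollary~\ref{distance-by-geodesics}; otherwise the path must cross $g$ at both its first and last steps, and then property (b), applied to the strictly shorter truly minimal pair $(x_1, x_{n-1})$, forces into $S$ the particular minimal path that hugs the far side of $g$ (minimal by criticality, since it crosses no geodesic twice); closedness then fills in all the cells on the near side of $g$ from $x_0$ to $x_n$, yielding a truly minimal path in $S$. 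This double-crossing/geodesic-hugging step is the rigorous substitute for your crossing argument, and routing the hard implication through (b) --- which your distance-based induction cannot invoke without circularity, since (b) needs a minimal path already in $S$ --- is the missing idea. Your remaining arrows can stand as written; replace the (a)$\Rightarrow$(c) sketch with a proof of (b)$\Rightarrow$(c) along these lines (noting that your (b)$\Rightarrow$(a) closedness argument is the same as the paper's opening move there, modulo the $x \ne z$ degeneracy handled by Corollary~\ref{cells-are-nice}).
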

\begin{proof}
~
\begin{description}
\item[(a)$\Rightarrow$(b)]
By Theorem~\ref{path-homotopy}, it suffices to show that if $S$ is closed, and $P$ is a path in $S$, then every deformation of $P$
is in $S$.  This is trivial, by definition of closed.
\item[(b)$\Rightarrow$(c)]
Let $S$ satisfy (b).  We first show that $S$ is closed.  Let $w, x, y, z$ be four cells in $S$ that meet around an interior vertex, in counterclockwise order,
Suppose that $x,y,z \in S$.  If $x = z$, then the cell $x$ contradicts Corollary~\ref{cells-are-nice}.  So $x \ne z$.  And since medial graphs are always
two-colorable, $x$ and $z$ are not adjacent.  Consequently $x \to w \to z$ and $x \to y \to z$ are both minimal paths from $x$ to $z$.  Since the former is in $S$,
the latter must also be in $S$, showing that $S$ is closed.

Using this, we show that for every $a, b \in S$, there is a minimal path between $a$ and $b$ in $S$, which suffices to show (c) given (b).  Since $S$
is connected, between any two $a, b \in S$ there is at least one path.  Call a path \emph{$S$-minimal} if it has minimal length among all paths that lie in $S$.
Then it remains to show that every $S$-minimal path is truly minimal, since an $S$-minimal path always exists between two cells in $S$.

We proceed by induction on the length $n$ of our $S$-minimal path.  The base cases where $n < 2$ are trivial.  So suppose that $x_0 \to x_1 \to  \cdots \to x_n$
is an $S$-minimal path for $n \ge 2$.  Then the path $x_1 \to \cdots \to x_n$ is also $S$-minimal, so by induction it
is truly minimal.  In particular,
\[ \dist(x_1,x_n) = n - 1.\]
Now let $g$ be the geodesic which separates $x_0$ from $x_1$.  If $x_n$ is on the same side of $g$ as $x_1$, then by Corollary~\ref{distance-by-geodesics},
$\dist(x_0,x_n) = \dist(x_1,x_n) + 1 = n$ and we are done.  So we can assume that $x_n$ is on the same side of $g$ as $x_0$.

Similarly, the path $x_0 \to \cdots \to x_{n-1}$ is minimal.  Since it begins with a step across the geodesic $g$, $x_{n-1}$ must be on the opposite
side of $g$ from $x_0$.  This makes $x_{n-1}$ and $x_n$ on opposite sides of $g$, so they must be separated
by $g$.  In other words, the path $x_0 \to \cdots \to x_n$ begins and ends by stepping across $g$.  Now the path $x_1 \to \cdots \to x_{n-1}$ is $S$-minimal, so by
induction it is truly minimal. But then property (b) implies that every minimal path between $x_1$ and $x_{n-1}$ is in $S$.  In particular,
the path from $x_1$ to $x_{n-1}$ that proceeds directly along the side of geodesic $g$, as in Figure~\ref{fig20-proof-idea}, must be in $S$.  This path is minimal
because it steps over no geodesic twice, by criticality.
\begin{figure}
\centering
\def \svgwidth{5in}
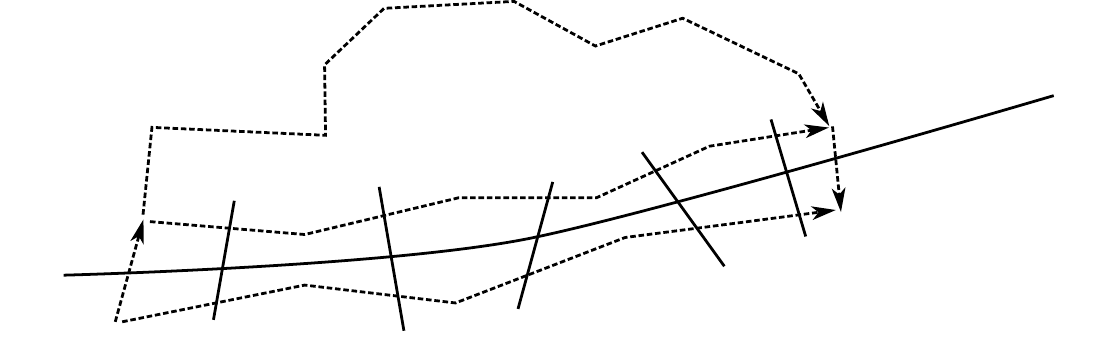
\caption{The path in question must begin and end by crossing $g$.  By induction, the path that proceeds directly from $x_1$ to $x_{n-1}$ along the side of $g$
must be in $S$.  Once all these cells are in $S$, the cells on the near side of $g$ must also be in $S$, because $S$ is closed.  This shows that a true minimal
path from $x_0$ to $x_n$ is in $S$.}
\label{fig20-proof-idea}
\end{figure}

Consequently, every cell on the \emph{far} side of $g$ from $x_1$ to $x_{n-1}$ is in $S$, as are the cells $x_0$ and $x_n$ which are opposite
$x_1$ and $x_{n-1}$, respectively.  Thus we have the setup of Figure~\ref{fig20-proof-idea}.
But then the fact that $S$ is closed easily implies that every cell on the \emph{near} side of $g$
from $x_0$ to $x_n$ is in $S$.  In particular, there is a truly minimal path from $x_0$ to $x_n$ in $S$, so every $S$-minimal path from $x_0$ to $x_n$
must be truly minimal.
\item[(c)$\Rightarrow$(d)]
Let $S$ satisfy (c), and suppose that $a$ and $b$ are adjacent cells, with $a \in S$ and $b \notin S$.  Let $H$ be the pseudo-halfplane
containing $a$ but not $b$.
We wish to show that $S \subseteq H$.  Suppose for the sake of contradiction that $c \in S$ and $c \notin H$.
Then by property (c) and Corollary~\ref{distance-by-geodesics}, the convex closure of $\{a,c\}$ is in $S$.  We claim that $b \in \overline{\{a,c\}} \subseteq S$,
a contradiction.  Otherwise, there must be some pseudo-halfplane $H'$ containing $a$ and $c$ but not $b$.  But then the geodesic which defines $H'$ must
separate $a$ from $b$, so $H'$ must be $H$ or its complement.  Either way, $H'$ cannot contain both $a$ and $c$, since
$a \in H$ and $c \notin H$.
\item[(d)$\Rightarrow$(e)]
Let $S$ satisfy (d).  If $S$ is empty, the result is trivial, so suppose that $S$ is nonempty.
Call $g$ a \emph{defining geodesic} of $S$ if there are adjacent cells $a,b$ separated by $g$, with $a \in S$ and $b \notin S$.
In such a circumstance, call the half-plane on the side of $g$ containing $a$ a \emph{defining halfplane}.
Then condition (d) implies that $S$ is contained in every defining halfplane.  Let $S'$ be the intersection of all defining halfplanes.  We
have $S \subseteq S'$, and $S'$ is convex.  It remains to show that $S = S'$.  Otherwise there exists some cell in $S' \setminus S$.
By Theorem~\ref{convex-connected}, $S'$ is connected, so there is some path from a cell in $S$ to a cell in $S' \setminus S$.  In particular,
there exists some pair of adjacent cells $a,b \in S'$ with $a \in S$ and $b \notin S$.  But then the halfplane containing $a$ but not $b$ is a
defining halfplane of $S$, so $b \notin S'$, a contradiction.
\item[(e)$\Rightarrow$(a)]
Let $S$ be convex.  Then Theorem~\ref{convex-connected} shows that $S$ is connected.  Convex sets are always closed, because half-planes are closed
and intersections of closed sets are closed.
\end{description}
\end{proof}

\begin{corollary}\label{characterize-closedness}
Let $S$ be a cellset.  Then $S$ is closed if and only if each connected component of $S$ is convex.
\end{corollary}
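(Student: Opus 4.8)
The plan is to reduce everything to Theorem~\ref{tfae}, which asserts that a cellset is convex precisely when it is closed and connected. Since every connected component is connected by construction, the assertion ``each connected component of $S$ is convex'' is equivalent to ``each connected component of $S$ is closed.'' So the corollary will follow once I establish that $S$ is closed if and only if each of its connected components is closed.

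Everything hinges on one elementary observation about the local picture at an interior vertex $v$, where two geodesics cross transversally. The four surrounding cells $c_1, c_2, c_3, c_4$, taken in cyclic order, have the property that consecutive cells share a geodesic segment emanating from $v$ (hence are adjacent), while opposite cells meet only at the point $v$ (hence are not adjacent). Their adjacency graph is therefore a $4$-cycle, and consequently any three of the four cells form a connected path. I would record this observation first, as it drives both directions.

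For the forward direction, suppose $S$ is closed and let $C$ be a connected component. To see that $C$ is closed, take any interior vertex $v$ having three of its four cells in $C$. These three cells lie in $S$, so closedness of $S$ forces the fourth cell, say $c$, into $S$ as well. But $c$ is adjacent to its two cyclic neighbors, and both of those neighbors are among the three cells already in $C$; hence $c$ is adjacent to a cell of $C$ and therefore belongs to $C$. Thus $C$ is closed, and being connected, it is convex by Theorem~\ref{tfae}.

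For the reverse direction, suppose each connected component of $S$ is convex, hence closed by Theorem~\ref{tfae}. Let $v$ be an interior vertex three of whose cells lie in $S$. By the $4$-cycle observation these three cells form a connected path lying entirely inside $S$, so they all belong to a single connected component $C$. Since $C$ is closed, its fourth cell lies in $C \subseteq S$; as $v$ was arbitrary, $S$ is closed. The argument is entirely formal once the local adjacency picture is in hand, so the only ``obstacle'' is making sure the three occupied cells are genuinely forced into one component, which is exactly what the $4$-cycle structure guarantees.
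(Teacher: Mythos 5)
Your proof is correct and takes essentially the same route as the paper: the paper's proof also reduces the statement, via the equivalence (a)$\iff$(e) of Theorem~\ref{tfae}, to the claim that $S$ is closed if and only if each connected component of $S$ is closed, which it leaves as an easy check. You simply fill in that check explicitly with the $4$-cycle adjacency picture at an interior vertex, which is valid in a critical medial graph (consecutive cells around a vertex are distinct and adjacent by two-colorability and Corollary~\ref{cells-are-nice}).
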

\begin{proof}
It is easy to check that $S$ is closed if and only if each connected component of $S$ is closed.  The result
then follows by the equivalence (a)$\iff$(e) of Theorem~\ref{tfae}.
\end{proof}

Next we return to the notion of rank:
\begin{theorem}\label{rank-formula}
If $S$ is a convex cellset, then $\rank(S)$ is one plus the number of geodesics $g$ which separate cells
of $S$.
\end{theorem}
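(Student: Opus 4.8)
The plan is to realize $S$ as a medial graph in its own right and then count its cells by Euler's formula. I may assume $S$ is nonempty, since Theorem~\ref{convex-connected} already restricts to nonempty convex sets (for empty $S$ the asserted formula would fail). By Theorem~\ref{convex-connected} there is a critical medial graph $M'$ whose cells are precisely the cells of $S$, and whose geodesics are in bijection with the geodesics of $M$ that separate cells of $S$. Write $k$ for the number of such geodesics, so $M'$ has $k$ geodesics, and write $I$ for the number of interior vertices of $M'$. Then $|S|$ equals the number of cells of $M'$, and the goal reduces to the purely combinatorial identity $\rank(S) = |S| - v(S) = 1 + k$.

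First I would verify that $v(S) = I$, that is, that the interior vertices of $M'$ coincide with the interior vertices $v$ of $M$ all four of whose surrounding cells lie in $S$. Each interior vertex of $M'$ is a transversal crossing of two of its geodesics, hence a crossing of two geodesics of $M$ whose four adjacent cells are cells of $M'$, i.e.\ cells of $S$; this is exactly a vertex counted by $v(S)$. Conversely, if $v$ is an interior vertex of $M$ with all four surrounding cells in $S$, then each of the two geodesics crossing at $v$ separates two of those four cells and so separates cells of $S$; thus both are geodesics of $M'$ and they cross at $v$ surrounded by cells of $S$, making $v$ an interior vertex of $M'$. This gives a bijection, so $v(S) = I$.

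It then remains to show that a critical medial graph with $k$ geodesics and $I$ interior vertices has exactly $1 + k + I$ cells; combined with $v(S) = I$ this yields $\rank(S) = (1 + k + I) - I = 1 + k$. If $k = 0$ then by Corollary~\ref{distance-by-geodesics} any two cells of $S$ coincide, so $S$ is a single cell and the formula holds trivially; so assume $k \ge 1$ and view $M'$ as a connected planar graph (connected because every geodesic has both endpoints on the boundary curve, so the entire figure hangs off the boundary cycle). Its vertices are the $2k$ boundary endpoints (distinct, since there are no triple intersections) together with the $I$ interior crossings, giving $V = 2k + I$. Its edges are the $2k$ arcs into which these $2k$ points cut the boundary curve, together with the geodesic segments: a geodesic carrying $n_i$ crossings is cut into $n_i + 1$ segments, and $\sum_i n_i = 2I$ since each crossing lies on two geodesics, so there are $2I + k$ geodesic segments and $E = 3k + 2I$ in all. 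Euler's formula $V - E + F = 2$ then gives $F = 2 + k + I$, and discarding the single unbounded face outside the boundary curve leaves $1 + k + I$ cells.

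The Euler count is routine; the only point demanding care is the identification $v(S) = I$ of the second step, where one must confirm that passing to the standalone medial graph $M'$ neither creates nor destroys fully-surrounded vertices. This follows directly from the construction in Theorem~\ref{convex-connected}, so I expect no genuine obstacle beyond this bookkeeping.
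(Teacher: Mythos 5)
Your proof is correct and follows essentially the same route as the paper: both reduce via Theorem~\ref{convex-connected} to viewing $S$ as a standalone critical medial graph and then apply Euler's formula, differing only in bookkeeping (you count $V = 2k+I$, $E = 3k+2I$ directly, while the paper uses the degree-sum identity $2e = 4v_i + 3v_b$ to get $\rank = v_b/2 + 1$). Your explicit verification that $v(S)$ equals the interior-vertex count of $M'$, and your handling of the nonempty and $k=0$ cases, are details the paper leaves implicit.
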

\begin{proof}
By Theorem~\ref{convex-connected}, it suffices to show this in the case where $S$ is the entire medial graph.
Then we need to show that the rank of $S$ is one plus the number of geodesics.  Let $c$ count the number of cells,
$v_i$ count the number of interior vertices, $v_b$ count the number of boundary vertices, and $e$ count the number
of edges including the segments of the boundary curve.  Interior vertices have degree 4 while boundary vertices have degree 3,
so that
\[ 2e = 4v_i + 3v_b.\]
Since all cells in a critical medial graph are homeomorphic to disks (by Corollary~\ref{cells-are-nice}), Euler's formula applies, yielding
\[ c + v_i + v_b = e + 1 = 2v_i + \frac{3}{2}v_b + 1.\]
Thus
\[ \rank(S) = c - v_i = \frac{3}{2}v_b - v_b + 1 = \frac{v_b}{2} + 1.\]
But $v_b/2$ counts the number of geodesics, because each geodesic has two endpoints.
\end{proof}

Recall that a cellset is ``safe'' if its closure has the same rank as it.
The main statement that we will need to set up our boundary value problems is the following:
\begin{lemma}\label{extend-by-one}
Let $S$ be a safe cellset with connected closure.  Let $b \notin \overline{S}$ be a cell adjacent to a cell
in $\overline{S}$.  Then $S \cup b$ is a safe cellset with connected closure.
\end{lemma}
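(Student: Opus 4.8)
The plan is to reduce everything to the rank formula (Theorem~\ref{rank-formula}) by sandwiching $\rank(\overline{S \cup b})$ between $\rank(S \cup b)$ and $\rank(S) + 1$, and showing both ends equal $\rank(S)+1$. First I would record two structural facts. Since $\overline{S}$ is a closure it is closed, and it is connected by hypothesis, so Corollary~\ref{characterize-closedness} makes it convex; write $r = \rank(S) = \rank(\overline{S})$ (the two are equal because $S$ is safe), which by Theorem~\ref{rank-formula} is one more than the number of geodesics separating cells of $\overline{S}$. I would also note that $\overline{S \cup b} = \overline{\overline{S} \cup b}$, since each of $S \cup b$ and $\overline{S} \cup b$ sits inside the other's closure.

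Next I would dispatch the two easy rank computations. Because $b \notin \overline{S}$, the set $S \cup b$ cannot be a simple extension of $S$: otherwise Lemma~\ref{closure-facts} would give $\overline{S \cup b} = \overline{S}$, forcing $b \in \overline{S}$. Hence Lemma~\ref{rank-and-extension} yields $\rank(S \cup b) = \rank(S) + 1 = r + 1$. Similarly $\overline{S}$ is closed, so by Lemma~\ref{closure-facts} it has no simple extensions at all, and adjoining $b$ gives $\rank(\overline{S} \cup b) = \rank(\overline{S}) + 1 = r + 1$; combined with $\rank(\overline{\overline{S} \cup b}) \le \rank(\overline{S} \cup b)$ (again Lemma~\ref{closure-facts}) this produces the upper bound $\rank(\overline{S \cup b}) \le r + 1$.

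The heart of the argument is the matching lower bound, and this is the only place I expect real subtlety. First, connectedness: since $b$ is adjacent to a cell $a \in \overline{S}$ and $\overline{S}$ is connected, the set $\overline{S} \cup b$ is connected, so its closure $\overline{S \cup b}$ is connected by Lemma~\ref{connectedness-preserved}; being closed and connected it is convex, so Theorem~\ref{rank-formula} applies to it. Every geodesic separating two cells of $\overline{S}$ separates those same two cells inside the larger set $\overline{S \cup b}$, so all $r-1$ of them are counted there. Now let $g$ be the geodesic separating $a$ from $b$. Condition (d) of Theorem~\ref{tfae}, applied to the convex set $\overline{S}$, shows that every cell of $\overline{S}$ lies on the $a$-side of $g$, so $g$ separates no two cells of $\overline{S}$; yet $g$ does separate $a$ from $b$, and both lie in $\overline{S \cup b}$. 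Thus $g$ is a genuinely new separating geodesic, so at least $r$ geodesics separate cells of $\overline{S \cup b}$, and Theorem~\ref{rank-formula} gives $\rank(\overline{S \cup b}) \ge r + 1$. Combining the bounds yields $\rank(\overline{S \cup b}) = r + 1 = \rank(S \cup b)$, so $S \cup b$ is safe; its closure was already shown to be connected, completing the proof. The key conceptual point is that I never need to prove $g$ is the \emph{only} new geodesic: the upper bound from closure and the lower bound from one explicit new geodesic already pinch the rank to $r+1$.
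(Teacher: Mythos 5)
Your proposal is correct and takes essentially the same route as the paper's proof: both obtain connectedness of $\overline{S \cup \{b\}}$ from Lemma~\ref{connectedness-preserved}, convexity from closedness plus connectedness, the upper bound $\rank(\overline{S\cup\{b\}}) \le \rank(S)+1$ from Lemmas~\ref{closure-facts} and \ref{rank-and-extension}, and the matching lower bound by exhibiting the geodesic $g$ separating $b$ from its neighbor as a new separating geodesic, excluded from cutting $\overline{S}$ by condition (d) of Theorem~\ref{tfae}. The only cosmetic deviations are your citation of Corollary~\ref{characterize-closedness} instead of Theorem~\ref{tfae} for convexity and routing the upper bound through $\overline{S}\cup\{b\}$ rather than directly through $S\cup\{b\}$.
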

\begin{proof}
Note that $\overline{S \cup \{b\}} = \overline{\overline{S} \cup \{b\}}$ by general abstract properties of closure operations.
Since $b$ is adjacent to $\overline{S}$ and $\overline{S}$ is connected, Lemma~\ref{connectedness-preserved} shows
that $\overline{S \cup \{b\}} = \overline{\overline{S} \cup \{b\}}$ is connected.

Since both $\overline{S}$ and $\overline{S \cup \{b\}}$ are closed and connected, they are convex by Theorem~\ref{tfae}.  So
Theorem~\ref{rank-formula} tells us that the ranks of $\overline{S}$ and $\overline{S \cup \{b\}}$ are determined by the number
of geodesics which cut across them.  Now any geodesic which cuts across $\overline{S}$ will also cut across $\overline{S \cup \{b\}}$,
so
\[ \rank(\overline{S}) \le \rank(\overline{S \cup \{b\}}).\]
But in fact at least one geodesic which cuts across $\overline{S \cup \{b\}}$ does not cut across $\overline{S}$, specifically
the geodesic $g$ which separates $b$ from its neighbor in $\overline{S}$.  This geodesic cannot cut across
$\overline{S}$ by condition (d) in Theorem~\ref{tfae}.  So we have
\begin{equation} \rank(\overline{S}) + 1 \le \rank(\overline{S \cup \{b\}}).\label{t1}\end{equation}

Now
\[ \rank(S) = \rank(\overline{S})\]
because $S$ is safe.  And since $b \notin \overline{S}$, $S \cup \{b\}$ is not a simple extension of $S$,
so that
\[ \rank(S \cup \{b\}) = \rank(S) + 1\]
by Lemma~\ref{rank-and-extension}.  And by Lemma~\ref{closure-facts},
\[
\rank(\overline{S \cup \{b\}}) \le \rank(S \cup \{b\}) = \rank(S) + 1 = \rank(\overline{S}) + 1.\]
Combining this with (\ref{t1}), we see that equality must hold, so in particular
\[ \rank(\overline{S \cup \{b\}}) = \rank(S \cup \{b\}),\]
so $S \cup \{b\}$ is safe.
\end{proof}

We will use this lemma to inductively build up safe sets of boundary cells.

\section{Nonlinear Recovery}\label{sec:nonlinear-recovery}
To complete the proof of Theorem~\ref{main-result}, we need to show how to recover the conductance functions of boundary triangles
in critical medial graphs.%
\begin{lemma}\label{problem-set-up}
Let $M$ be a critical medial graph, $b$ be a boundary cell, and $g$ be one of the geodesics by $b$, as in Figure~\ref{fig24-theorem-setup}.
Then there are two sets of boundary cells $T, S$
such that
\begin{itemize}
\item $S$ is a safe cellset: $\rank(S) = \rank(\overline{S})$.
\item $T \subseteq S$.
\item $b \in S \setminus T$.
\item $\overline{T}$ is the halfplane on the far side of $g$ from $b$.
\item $\overline{S}$ is the entire medial graph.
\end{itemize}
\end{lemma}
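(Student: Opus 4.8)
The plan is to construct $S$ and $T$ as an increasing chain of sets of boundary cells, adjoining one boundary cell at a time and invoking Lemma~\ref{extend-by-one} at each step so that safety and connectedness of the closure are preserved throughout. A single cell is trivially safe with connected closure, so starting from one boundary cell on the far side of $g$ and repeatedly extending, every intermediate set is safe with convex closure (closed and connected closures are convex by Theorem~\ref{tfae}). The construction runs in two phases: phase one adjoins far-side boundary cells until the closure first equals the far halfplane $H$, and I set $T$ to be the cells adjoined so far (so $\overline T = H$); phase two then adjoins $b$ and continues adjoining boundary cells until the closure becomes all of $M$, and I set $S$ to be every cell adjoined. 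Since each step is a safe simple extension, $S$ inherits safety from Lemma~\ref{extend-by-one}, and $T \subseteq S$ by construction.

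The engine of both phases is an \emph{adjacency claim}: if $R$ is either $H$ or all of $M$, and $X \subsetneq R$ is a convex cellset containing at least one boundary cell of $M$ lying on $R$'s part of $\partial M$, then some boundary cell of $M$ outside $X$ is adjacent to $X$. Granting this, I apply it at every stage with $R = H$ (phase one) and then $R = M$ (phase two): whenever the current closure $X_i$ is not yet $R$, the claim yields an unused boundary cell adjacent to $X_i$, which Lemma~\ref{extend-by-one} lets me adjoin, and since the closure strictly grows each step the process terminates with closure exactly $R$. To launch phase two from $T$, I observe that the cell across $g$ from $b$ is a far-side cell, hence lies in $H = \overline T$; thus $b$ is adjacent to $\overline T$ but not in it, so $b$ may be adjoined first. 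Because $b$ lies strictly on the near side of $g$, this gives $b \in S \setminus T$ and $b \notin \overline T = H$.

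I expect the adjacency claim to be the main obstacle, and I would prove it through the Jordan-curve structure of convex sets. By Theorem~\ref{convex-connected} the union of the cells of $X$ is a topological disk whose boundary $J_X$ is a Jordan curve composed of arcs along $\partial M$ and arcs along interior geodesics; since $X$ contains a boundary cell $J_X$ has an arc along $\partial M$, and since $X \neq R$ it also has an interior geodesic arc. Tracing $J_X$, there is a transition point $p$ where it leaves $\partial M$ along a geodesic $h$; then $p$ is a boundary vertex, and the boundary cell $w$ on the far side of $h$ at $p$ is unused and adjacent to the $X$-cell on the near side of $h$ at $p$, as required. The delicate point, when $R = H$, is to force this transition onto the $C$-arc of $\partial H$ rather than onto $g$. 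For this I would use that, by criticality, every geodesic of the medial graph of $H$ meets $g$ at most once and so has an endpoint on the $C$-arc; consequently two $C$-arc cells straddling a defining geodesic of $X$ cannot both lie in the convex set $X$, which shows that if $X$ contained \emph{every} $C$-arc boundary cell it would already equal $H$. Since $X \neq H$ this is impossible, and the transition must therefore land on the $C$-arc, producing a boundary cell $w$ inside $H$.

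Finally I would assemble the pieces. Phase one produces $T$ with $\overline T = H$; adjoining $b$ and running phase two with $R = M$ (where every geodesic has both endpoints on $\partial M$, so the transition argument is immediate) produces $S \supseteq T$ with $b \in S \setminus T$ and $\overline S = M$; and $\rank(S) = \rank(\overline S)$ holds because $S$ was built entirely by safe simple extensions. The remaining verifications — that the far arc of $C$ is nonempty so phase one has a cell to start from, that $b$ genuinely lies on the near side of $g$, and that each phase terminates since $|S_i|$ strictly increases — are routine.
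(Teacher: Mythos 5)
Your proposal is correct, and its skeleton is exactly the paper's: a two-phase greedy accretion of boundary cells, with Lemma~\ref{extend-by-one} preserving safety and connected closure at every step, convexity of closures supplied by Theorem~\ref{tfae}, and the final step $\overline{S} = M$ coming from the fact that no pseudo halfplane contains every boundary cell. Where you diverge is in the two technical verifications. First, the paper never needs your standalone adjacency claim: it numbers the boundary cells of $H$ consecutively as $a_1, \ldots, a_n$ and always adjoins $a_{j}$ for the \emph{least} $j$ with $a_j \notin \overline{T_i}$, so that $a_{j-1} \in \overline{T_i}$ and adjacency of consecutive boundary cells gives the hypothesis of Lemma~\ref{extend-by-one} for free; your Jordan-curve tracing via Theorem~\ref{convex-connected} proves more than is needed, though it is sound, and indeed your simpler observation inside it (walk along the $C$-arc from a cell of $X$ to a missing cell and take the first transition pair) is essentially the paper's mechanism in disguise. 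Second, the paper obtains $\overline{T} = H$ post hoc rather than by construction: having arranged that $\overline{T}$ contains every boundary cell of $M$ in $H$, it argues that any halfplane $H'$ containing $\overline{T}$ has its defining geodesic starting and ending on the near arc, hence (by criticality, since such a geodesic would otherwise cross $g$ twice) $H \subseteq H'$, so the convex set $\overline{T}$ contains $H$. Your straddle argument --- every geodesic segment in $H$ has an endpoint on the $C$-arc because it meets $g$ at most once, so a defining geodesic of a proper convex subset would separate two $C$-arc cells --- uses the same criticality fact at the analogous moment, just packaged to guarantee progress during phase one instead of to identify the closure afterwards. The trade-off: the paper's least-index trick is shorter and avoids any analysis of the boundary structure of convex sets, while your adjacency claim is more uniform (it drives both phases identically) at the cost of the extra topological argument; note also that your phase-one reasoning tacitly uses that closures taken in $M$ of subsets of $H$ remain in $H$, which is fine since halfplanes are closed, but deserves the one-line remark.
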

\begin{figure}
\centering
\def \svgwidth{4in}
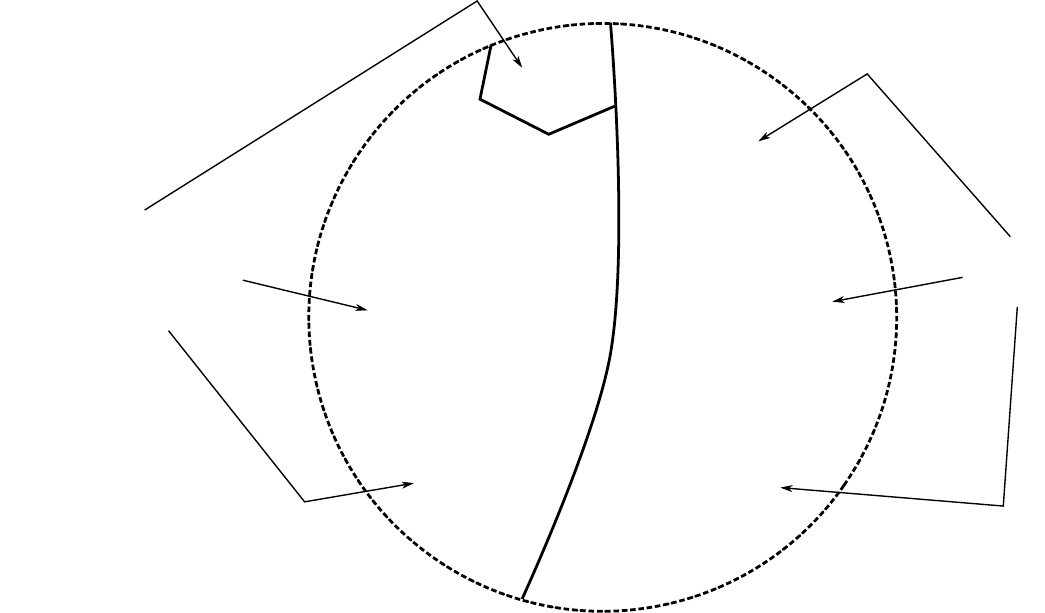
\caption{The setup of Lemma~\ref{problem-set-up}.  Given boundary cell $b$ and adjacent geodesic $g$, we can find sets $S$ and $T$ of boundary cells
having the desired properties.}
\label{fig24-theorem-setup}
\end{figure}
\begin{proof}
Let $H$ be the half-plane on the far side of $g$ from $b$.  Number the boundary cells in $H$
$a_1, a_2, \ldots, a_n$ in consecutive order.  Let $T_0 = \{a_1\}$, and then recursively define
$T_{i+1}$ to be $T_i \cup \{a_{j_i}\}$ where $j_i$ is the minimal $j$ such that $a_j \notin \overline{T_i}$.  Stop
once $\overline{T_i}$ contains every $a_j$.  This gives us a chain of sets of boundary cells
\[ T_1 \subset T_2 \subset \cdots \subset T_n.\]
By choice of $a_{j_i}$, we must have $a_{j_i - 1} \in \overline{T_i}$ for each $i$.  Consequently,
$a_{j_i}$ is adjacent to a cell in $\overline{T_i}$.  Then by Lemma~\ref{extend-by-one}, we see that
if $T_i$ is safe with connected closure, so is $T_{i+1} = T_i \cup \{a_{j_i}\}$.  Since $T_1 = \{a\}$ clearly
has $\overline{T_1} = T_1$, $T_1$ is safe with connected closure.  Thus every $T_i$ is safe with connected closure.

Let $T = T_n$.  Then $T$ is safe with connected closure, and $\overline{T}$ contains every boundary
cell in $H$.  Since $T \subseteq H$, $\overline{T} \subseteq \overline{H} = H$.  But conversely, if any half-plane $H'$
contains $\overline{T}$, then its defining geodesic $g'$ must start and end on the far side
of $H'$, so by criticality it cannot cross $g$, and $H \subseteq H'$.  Consequently the convex closure of $\overline{T}$
contains $H$.  But by Theorem~\ref{tfae}, $\overline{T}$ is already convex, so we see that $H \subseteq \overline{T}$.
So $H = \overline{T}$.

We then let $T_{n+1} = T_n \cup \{b\}$.  Now $b \notin \overline{T} = H$, so by Lemma~\ref{extend-by-one} again, $T_{n+1}$
is still safe with connected closure.  Continuing on inductively we can keep adding more boundary cells until we have
a set $S$ which is safe, has connected closure, contains $T \cup \{b\}$, and has every boundary cell in its closure.
But $\overline{S}$ is convex by Theorem~\ref{tfae}, and any convex set containing every boundary cell must be the whole medial graph,
since no halfplane contains every boundary cell.

See Figure~\ref{fig24-theorem-illustration} for an illustration of all these steps in a small example.
\end{proof}
\begin{figure}
\centering
\def \svgwidth{5in}
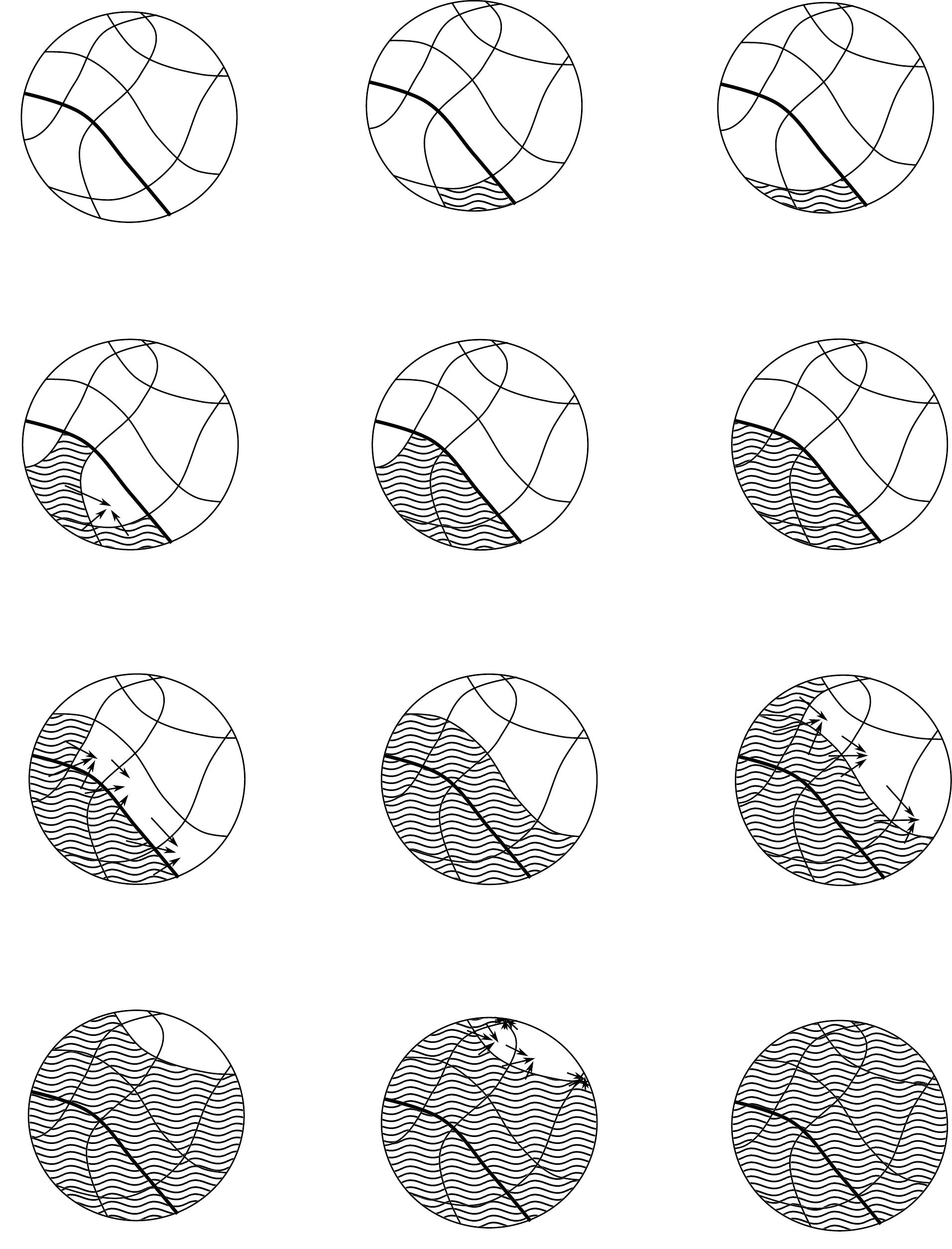
\caption{A small example of the proof of Lemma~\ref{problem-set-up}.  By a series of safe simple extensions, we first fill up the desired pseudo halfplane (the one
below the darkened geodesic), and then proceed to fill up the rest of the medial graph.  Arrows show the propagation of information.  Here, $g$ is the darkened geodesic
and the boundary cell $b$ is the one above the left end of $g$.  $S$ consists of all four boundary cells below $g$, while $T$ additionally contains $b$
and the next two boundary cells after $b$ in clockwise order.}
\label{fig24-theorem-illustration}
\end{figure}

Using this we prove the other direction of Theorem~\ref{main-result}
\begin{proof}[Proof (of Theorem~\ref{main-result})]
By Theorem~\ref{conditional}, we only need to show that conductance functions of boundary triangles are recoverable.
%
%
%We have already seen in Section~\ref{sec:linear} that if a graph's medial graph is not critical, then the graph is not recoverable.  We need to show
%that if the medial graph is critical, then the conductance functions are recoverable.  Let $\Gamma$ be a bijective network, with Dirichlet-Neumann
%relation $\Lambda$.  Suppose by induction that all smaller critical graphs are recoverable.  As noted in Section~\ref{sec:covoltages},
%$\Lambda$ determines the voltage-covoltage relation $\Xi$.  If $\Gamma$ has no edges, then there is nothing to recover.  If $\Gamma$ is not connected,
%it suffices to do recovery on each connected component, and this is possible by induction, and the easily verifiable fact that the connected components
%of a critical circular planar graph are critical themselves.  Otherwise we can suppose that $\Gamma$ is connected and has at least one edge.  Then
%by Lemma~\ref{triangles-exist}, there is some boundary triangle $b$ in the medial graph,
%corresponding to a boundary spike or boundary-to-boundary edge $e$.
Let $b$ be a boundary triangle, and let $g$ be one of the two geodesics which define $b$.
Find cellsets $T$ and $S$ as in Lemma~\ref{problem-set-up}.  Fix boundary values of $x$ at cell $b$, and $0$ on $S \setminus \{b\}$.
Since $S$ is safe and has closure the entire medial graph, it follows by Theorem~\ref{safe-sets-work} that these boundary values
uniquely determine a labelling $\phi$ of the entire medial graph.  The restriction of $\phi$ to $\overline{T}$ is a labelling of $\overline{T}$,
and so is the constant zero function.  Since these extend the same labelling of $T$, Theorem~\ref{safe-sets-work} shows that $\phi$ must identically
vanish on $\overline{T}$.  By Lemma~\ref{problem-set-up}, $\overline{T}$ contains a neighbor $a$ of $b$, and the cell $c$ that diagonally
touches $a$.  Letting $d$ be the fourth cell that meets at the apex of $b$, we have from Equation (\ref{consistency-equation})
\[ \phi(d) = \phi(d) - \phi(b) = \pm \gamma^{\pm}(\phi(a) - \phi(c)) = \pm \gamma^{\pm}(x) \]
where the signs depend on the orientation of $e$, the order of $a,b,c,d$ around their meeting point, and whether or not $e$ is a boundary spike.
But we can determine $\phi(d)$ from the boundary relation $\Xi$, since $d$ is a boundary cell.  It follows that we can read off $\pm \gamma^{\pm}(x)$
directly from $\Xi$.
%
%TODO Figure TODO.
%
%So the conductance function at $e$ is determined by $\Xi$.  Then by Lemma~\ref{reversible-boundary-modifications}, we can also determine
%the voltage-covoltage relation $\Xi'$ of the network obtained by deleting or contracting $e$.  But this new network has fewer edges
%and its medial graph is still critical by Lemma~\ref{reversible-boundary-modifications}, so by induction, we can recover all remaining
%conductance functions, merely from $\Xi$.
\end{proof}

\section{Applications of Negative conductances}\label{sec:applications}
Even though our original motivation for Theorem~\ref{main-result} was the recovery of networks that had nonlinear
\emph{monotone} conductance functions, monotonicity was never needed in the proof.  In fact,
we could have allowed our currents and voltages to take values in an arbitrary abelian group.  This result is rather strong, and one wonders what sort
of applications it might have.

One interesting implication is that we can allow linear conductance functions $\gamma(x) = cx$ arbitrary $c \in \mathbb{C} \setminus \{0\}$,
and recovery will be possible.  While these
nonpositive conductances might seem arbitrary, there are a couple of circumstances where they come up naturally.
\subsection{Removing a mild failure of circular planarity}

\begin{figure}
\centering
\def \svgwidth{4.5in}
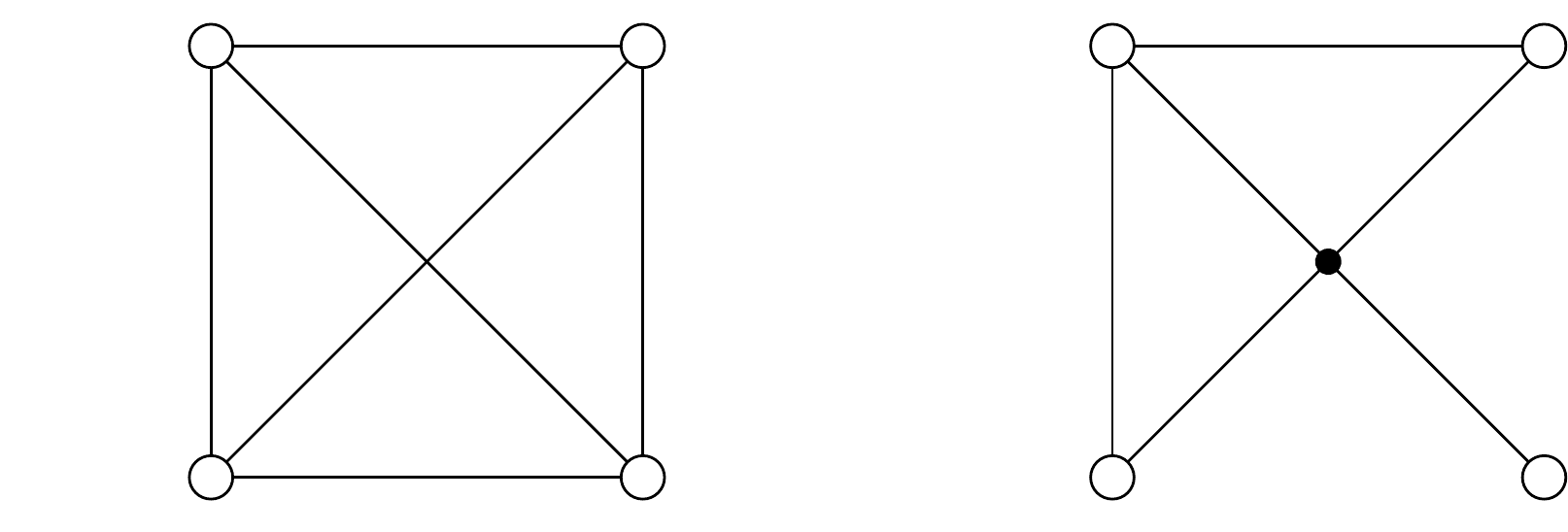
\caption{A complete graph on four vertices, and a critical circular planar graph.  Both have six edges, and generically, they are interchangeable,
after an appropriate birational change of variables.}
\label{fig26-well-connected}
\end{figure}
One circumstance is a generalization of the Y-$\Delta$ transformation of Section~\ref{sec:linear}.\footnote{The following idea appears in Konrad Schr{\o}der's
REU paper \cite{KonradSchroder} in 1995 and in Michael Goff's REU paper \cite{MichaelGoff} in 2003.
The latter paper claims to have a proof that signed conductances can
be recovered in critical circular planar networks, provided that the Dirichlet-Neumann relation is a function.  However, the proof seems flawed to me.}  Consider the
circular planar network shown on the right side of Figure~\ref{fig26-well-connected}, and suppose that all conductors are linear.  By a direct calculation,
one can show that this network is electrically equivalent to the network on the complete graph $K_4$ on the left side of Figure~\ref{fig26-well-connected}
with the conductances
specified as follows:
\begin{align*}
\lambda_{12} &= a + \frac{cd}{\sigma} \\
\lambda_{13} &= b + \frac{ce}{\sigma} \\
\lambda_{14} &= \frac{cf}{\sigma} \\
\lambda_{23} &= \frac{de}{\sigma} \\
\lambda_{24} &= \frac{df}{\sigma} \\
\lambda_{34} &= \frac{ef}{\sigma}
\end{align*}
where $\sigma = c + d + e + f$, provided that $c + d + e + f$.
Conversely, if the $\lambda_{ij}$ are positive numbers, then the $K_4$ on the left side of Figure~\ref{fig26-well-connected} is electrically equivalent
to the graph on the right side of Figure~\ref{fig26-well-connected} with conductances given as follows:
\begin{align*}
a &= \lambda_{12} - \frac{\lambda_{14}\lambda_{23}}{\lambda_{34}} \\
b &= \lambda_{13} - \frac{\lambda_{14}\lambda_{23}}{\lambda_{24}} \\
c &= \frac{q \lambda_{14}}{\lambda_{24} \lambda_{34}} \\
d &= \frac{q}{\lambda_{34}} \\
e &= \frac{q}{\lambda_{24}} \\
f &= \frac{q}{\lambda_{23}}
\end{align*}
%\[ c = \frac{\lambda_{14}^2\lambda_{23}}{\lambda_{24}\lambda_{34}} + \frac{\lambda_{14}\lambda_{23}}{\lambda_{34}}
%+ \frac{\lambda_{14}\lambda_{23}}{\lambda_{24}} + \lambda_{14} \]
%\[ d = \frac{\lambda_{14}\lambda_{23} + \lambda_{23}\lambda_{24}}{\lambda_{34}} + \lambda_{23} + \lambda_{24} \]
%\[ e = \frac{\lambda_{14}\lambda_{23} + \lambda_{23}\lambda_{34}}{\lambda_{24}} + \lambda_{23} + \lambda_{34} \]
%\[ f = \lambda_{14} + \lambda_{24} + \lambda_{34} + \frac{\lambda_{24}\lambda_{34}}{\lambda_{23}}\]
where
\[ q = \lambda_{14} \lambda_{23} + \lambda_{23} \lambda_{24} + \lambda_{23} \lambda_{34} + \lambda_{24} \lambda_{34}\]

So we have a way of transforming
any $K_4$ (with positive conductances) into an electrically equivalent network on the graph on the right side of Figure~\ref{fig26-well-connected},
possibly introducing nonpositive conductances in the process.  Now from the equations for $a,\ldots,f$, we see
that only $a$ or $b$ can be zero or negative.  A conductance of zero is equivalent to a deleted edge, so
any $K_4$ (with positive conductances) can be turned into one of the four graphs of Figure~\ref{fig27-fourpossibilities}
(with conductances in $\mathbb{R} \setminus \{0\}$).
These graphs' associated medial graphs are also shown
in Figure~\ref{fig27-fourpossibilities}.
\begin{figure}
\centering
\def \svgwidth{4.5in}
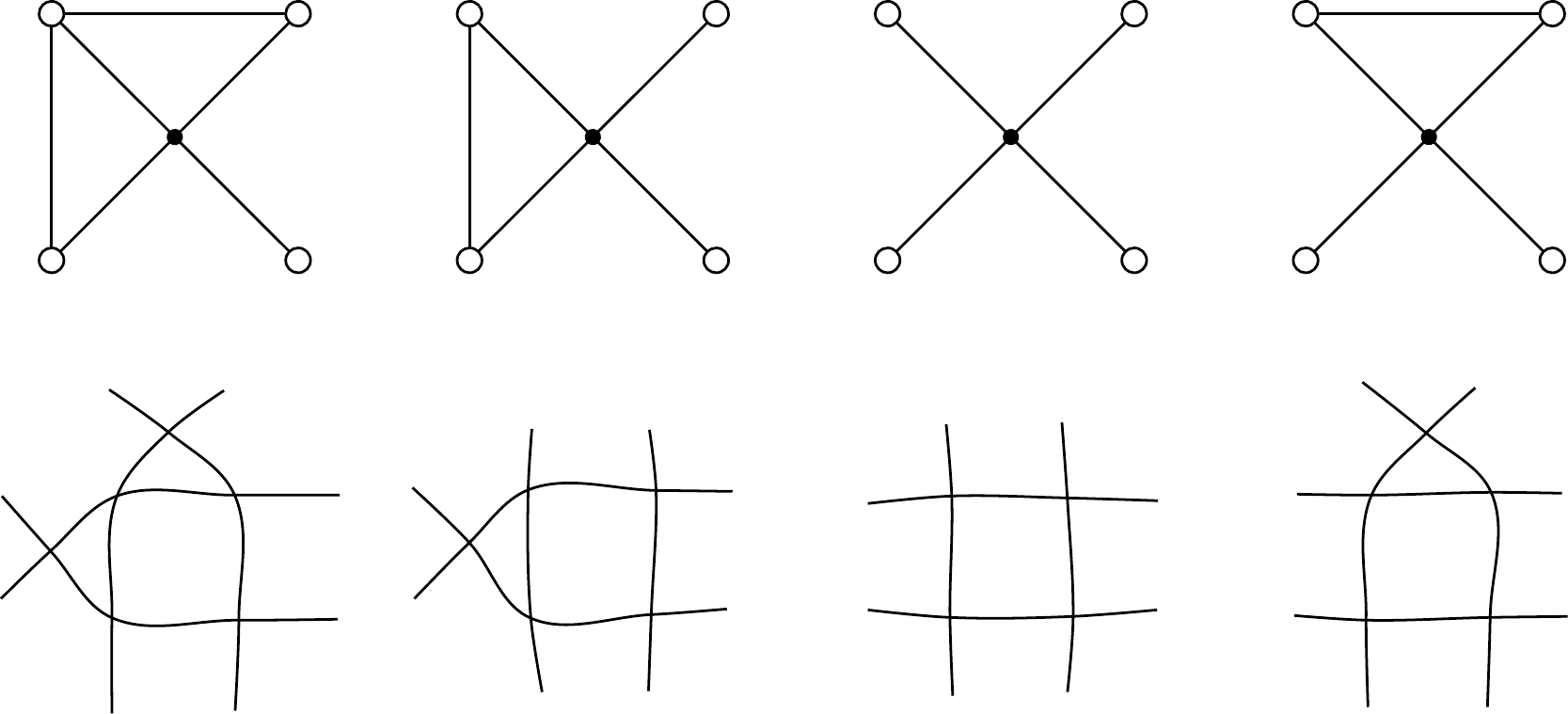
\caption{A $K_4$ with positive conductances
can be turned into one of the four critical circular planar graphs of the top row, possibly introducing negative conductances in the process.
The four associated medial graphs are shown on the bottom row.}
\label{fig27-fourpossibilities}
\end{figure}

Like a Y-$\Delta$ transform, we can apply this transformation to a $K_4$ embedded locally within a larger graph.
For example, the graph of Figure~\ref{example-k4} (with positive conductances) will be electrically equivalent to one of the graphs
of Figure~\ref{the-four-it-becomes} (with nonzero conductances).  Now these resultant graphs are circular planar, and it is easy to check
that they are critical.  Consequently, the original graph of Figure~\ref{example-k4}
is (weakly) recoverable - as long as the response matrix provides enough information to distinguish
the four graphs of Figure~\ref{the-four-it-becomes}.  This is implied by the following lemma, which generalizes
results in \cite{CandM}.
\begin{figure}
\centering
\def \svgwidth{2in}
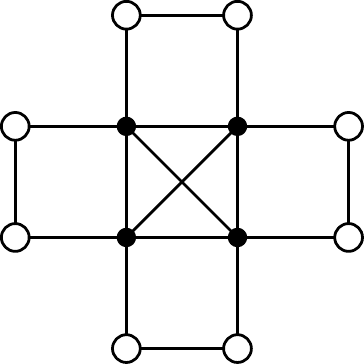
\caption{If all the conductances in this graph are positive, it will be electrically equivalent to one of the four graphs in
Figure~\ref{the-four-it-becomes}, all of which are recoverable.}
\label{example-k4}
\end{figure}
\begin{figure}
\centering
\def \svgwidth{5in}
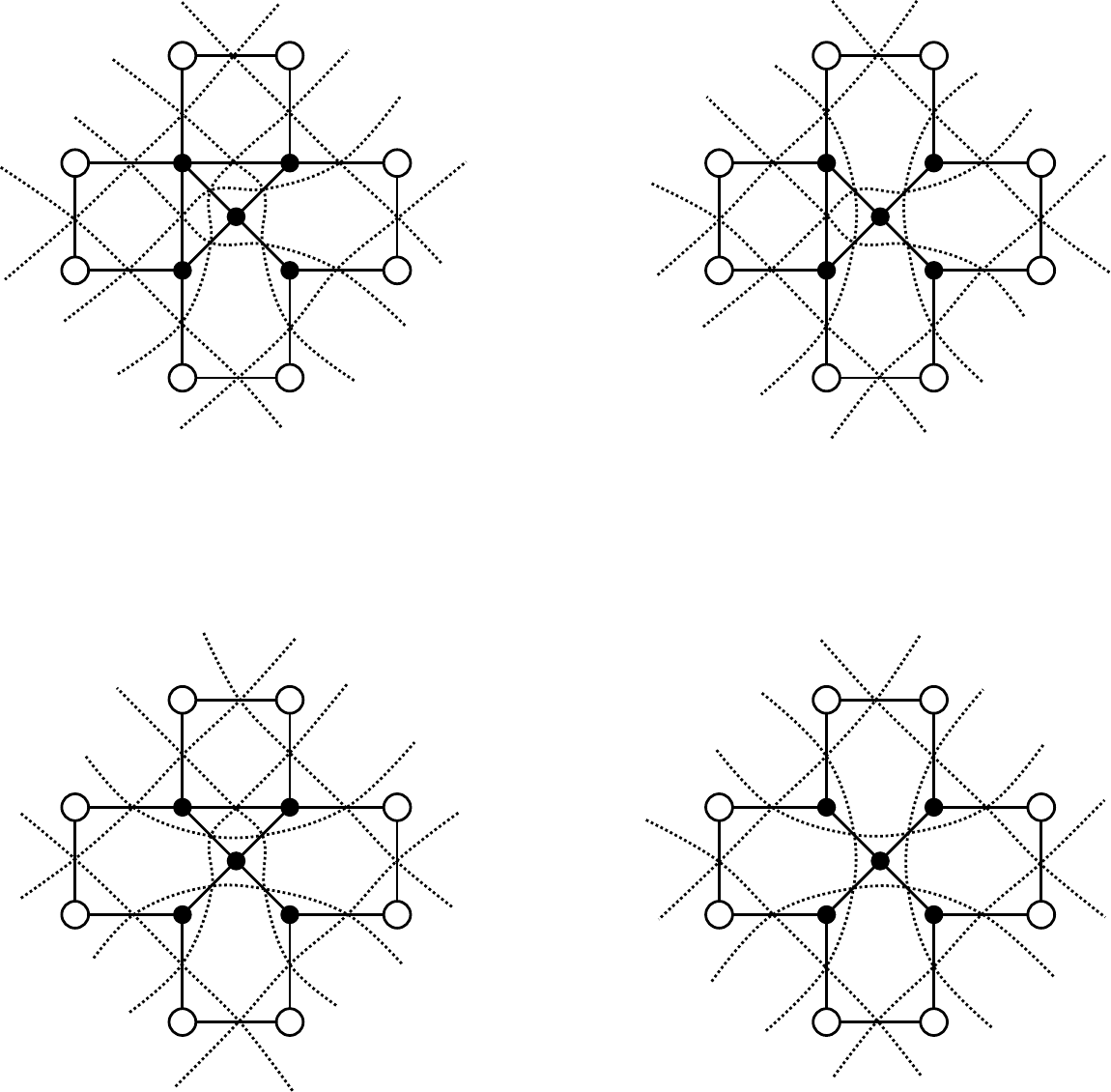
\caption{These four graphs are recoverable.  Furthermore, they are electrically distinguishable from each other, by Lemma~\ref{z-sequence}.}
\label{the-four-it-becomes}
\end{figure}

\begin{definition}
Let $M_1$ and $M_2$ be two critical medial graphs, embedded on the same disks with boundary vertices in the same locations.  Then we say that
$M_1$ and $M_2$ are \emph{equivalent} if for every boundary vertex $v$, the geodesics $g_{M_1}$ of $M_1$ and $g_{M_2}$ of $M_2$ beginning at $v$
end at the same boundary vertex.
\end{definition}

\begin{lemma}\label{z-sequence}
Let $M_1$ and $M_2$ be two colored critical medial graphs with conductances, with the same boundary vertices.  If $M_1$ and $M_2$ have the same
boundary relation, then $M_1$ and $M_2$ are equivalent.
\end{lemma}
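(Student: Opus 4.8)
The plan is to show that the relation $\Xi$ already pins down the purely combinatorial datum of which boundary vertices are joined by a geodesic. Two critical medial graphs are declared equivalent exactly when they induce the same pairing of boundary vertices, and a geodesic $g$ is recorded by the partition of the boundary cells into the two arcs it separates (its two endpoints being the gaps that bound those arcs). So it suffices to reconstruct, from $\Xi$ alone, the family of arc-partitions cut out by the geodesics. I would fix notation first: write $n$ for the number of geodesics, so that there are $2n$ boundary vertices interleaved with $2n$ boundary cells and $\Xi \subseteq \mathbb{R}^{2n}$ is a relation on functions on the boundary cells.

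The key is to isolate an invariant of $\Xi$ that is manifestly independent of the underlying graph. For a set $A$ of boundary cells, consider the map sending a boundary datum in $\Xi$ to its restriction to $A$. Whether this map is a bijection onto $\mathbb{R}^A$ is a property of $\Xi$ as a subset of $\mathbb{R}^{2n}$, hence is shared by any two medial graphs with the same $\Xi$. I claim, using Theorem~\ref{safe-sets-work}, that the restriction is a bijection precisely when $A$ is a safe cellset whose closure is the whole graph: if $A$ is safe with $\overline{A}$ the whole graph, any assignment on $A$ extends uniquely to a labelling, which supplies and is determined by its boundary values; a non-determining $A$ instead leaves genuine freedom or an obstruction. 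By Theorem~\ref{rank-formula} the whole graph has $\rank$ equal to $n+1$, so such a determining $A$ satisfies $\rank(A) = n+1$. These determining boundary-cell-sets are therefore an invariant of $\Xi$, and they form the bases of a matroid-type structure on the $2n$ boundary cells.

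I would then show that this structure recovers the geodesic pairing. The combinatorial meaning of safeness and closure, developed in Sections~\ref{sec:info-prop} and \ref{sec:convexity}, ties the dependent sets directly to geodesic separations: a minimal dependent set of boundary cells records exactly an arc bounded by two geodesic endpoints, so the circuits of the structure localize the endpoints of each geodesic. Concretely, for a boundary cell $b$ and an adjacent geodesic $g$ I can run the construction of Lemma~\ref{problem-set-up} to produce the far halfplane $H = \overline{T}$, whose boundary cells are exactly the arc that $g$ cuts off; that this arc is detected by which enlargements of $T$ remain determining is precisely the invariant above. Reading off all such arcs reconstructs, for every boundary vertex, the far endpoint of its geodesic, which is the asserted equivalence.

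The hard part is establishing faithfulness of this characterization for arbitrary bijective conductances. One direction, that a combinatorially determining $A$ yields a bijective restriction, is immediate from Theorem~\ref{safe-sets-work}. The reverse direction, that a set which is not safe with full closure can never give a bijection no matter how badly behaved (non-monotone, non-linear, merely bijective) the conductance functions are, is where the real work lies, since there are no determinants and no monotonicity to exploit and everything must be argued through $\rank$ and closure. Bundled with this is the purely combinatorial statement that distinct pairings produce distinct families of determining sets; I expect this to follow by exhibiting, for any two pairings, a geodesic arc that is independent in one arrangement but forced to be dependent in the other, but verifying it uniformly — including in the presence of closed-loop geodesics and degenerate boundary cells that touch the boundary curve more than once — is the delicate step.
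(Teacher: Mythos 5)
Your overall strategy -- extract from $\Xi$ a graph-independent invariant and use it to reconstruct the geodesic pairing -- points in the right direction, but as written it has two genuine gaps, both of which you yourself flag as ``the real work'' and then leave unproven. First, your invariant rests on an \emph{iff}: restriction of $\Xi$ to $A$ is a bijection onto $\mathbb{R}^A$ precisely when $A$ is safe with $\overline{A}$ the whole graph. Theorem~\ref{safe-sets-work} gives only the forward direction. The converse -- that a set failing this combinatorial condition can never be determining, for \emph{every} bijective conductance structure, with no monotonicity, continuity, or linearity available -- is nowhere in the paper and is not a routine rank count: for a non-safe $A$ with full closure one gets at-most-one extension but must rule out surjectivity onto $\mathbb{R}^A$, and for arbitrary bijections $\gamma_v$ there is no dimension argument to fall back on. Second, the ``purely combinatorial'' faithfulness claim (distinct pairings yield distinct families of determining sets, circuits correspond to geodesic arcs) is stated as an expectation, not proved. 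A proof of the lemma cannot outsource both of its load-bearing steps.

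The paper's actual proof shows these gaps are avoidable, because it never characterizes determining sets at all. It produces a single asymmetric certificate: assuming the geodesics $g_1$, $g_2$ at a common boundary vertex $w$ end at $v_1 \ne v_2$, it uses Lemma~\ref{problem-set-up} in $M_1$ (with $\overline{T} = H$ the far halfplane of $g_1$ and $\overline{S}$ everything) to exhibit a boundary datum in $\Xi$ taking the value $1$ at the cell $b$ by $w$ and $0$ at every boundary cell of $H$ -- this uses only the \emph{easy} direction of Theorem~\ref{safe-sets-work}. It then shows this datum is impossible in $M_2$: the corresponding cell $b'$ lies in $\overline{Q}$, the closure of the corresponding boundary arc $Q$ (via connectedness, convexity from Theorem~\ref{tfae}, and the observation that only $g_2$ could separate $b'$ from $Q$, yet $Q$ straddles $g_2$ at $v_2$), so zeros on $Q$ force zero at $b'$ by the \emph{unconditional} at-most-one-extension property, which holds for any bijective conductances. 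Your plan can be repaired along exactly these lines: discard the matroid of determining sets, and test membership in $\Xi$ of this one datum, whose realizability is manifestly an invariant of $\Xi$; then only one-sided implications are needed, and your delicate converse and faithfulness claims never arise.
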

\begin{proof}
Let $w$ be one of the common boundary vertices of $M_1$ and $M_2$.  Let $g_i$ be the geodesic in $M_i$ with one endpoint at $w$, and
let $v_i$ be the other endpoint of $g_i$.  Suppose for the sake of contradiction that $v_1 \ne v_2$.  Let $H$ be the half-plane
in $M_1$ on the side of $g_1$ containing $v_2$, and let $b$ be the cell of $M_1$ that is by $w$ but not in $H$, as in Figure~\ref{fig:final-proof}.
By Lemma~\ref{problem-set-up}, we can find sets of boundary
cells $T \subseteq S$, with $b \in S \setminus T$, $S$ and $T$ safe, $\overline{T} = H$, and $\overline{S}$ equal to the whole medial graph $M_1$.
So we can set boundary values of $0$ on $T$, $1$ at $b$, and whatever we like on $S \setminus \{b\}$, and there will be at least one
labelling of $M_1$ with these boundary values.  In fact, since $\overline{T} = H$, the labelling must take the value zero on all of $H$,
so we can even assert boundary values of zero on every boundary cell in $H$.

In particular, we see that for $M_1$, there is at least one set of valid boundary values that takes the value $1$ at $b$, and $0$ at every boundary
cell in $H$.  We claim that this fails in $M_2$, a contradiction.
To see this, let $Q$ be the set of boundary cells in $M_2$ corresponding to the boundary cells of $H$.  That is, $Q$ consists of all
the boundary cells in $M_2$ along the boundary arc from $w$ to $v_1$ that passes through $v_2$.  Let $b'$ be the cell of $M_2$ corresponding
to $b$.  If $M_1$ and $M_2$ had the same boundary relation, then there should be a labelling of $M_2$ which takes the value
$0$ at every cell in $Q$, and $1$ at $b'$.  But this is impossible because $b' \in \overline{Q}$, so that boundary values of zero on $Q$ should
force a value of zero at $b'$.  To see that $b' \in \overline{Q}$, note first that $Q$ is connected, so $\overline{Q}$ is connected and thus convex.
But $b'$ is directly across $g_2$ from a cell of $Q$, so if any geodesic separates $b'$ from $\overline{Q}$, it would be $g_2$, which is impossible
because cells on both sides of $g_2$ are in $Q$, specifically the two cells around $v_2$.  So no geodesic separates $b'$ from $Q$,
and $b' \in \overline{Q}$.

So the assumption that $v_1 \ne v_2$ was false.  Consequently $g_1$ and $g_2$ have the same start and endpoints.
\end{proof}
\begin{figure}
\centering
\subfloat[$M_1$]{\label{final-proof-1}
\def \svgwidth{2in}
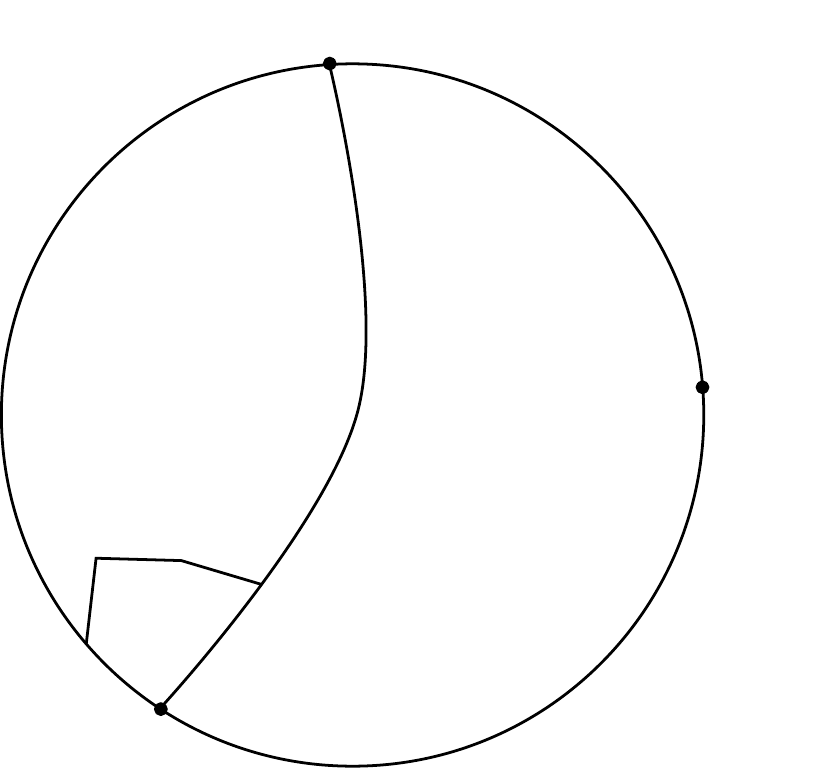}
\qquad
\subfloat[$M_2$]{\label{final-proof-2}
\def \svgwidth{2.25in}
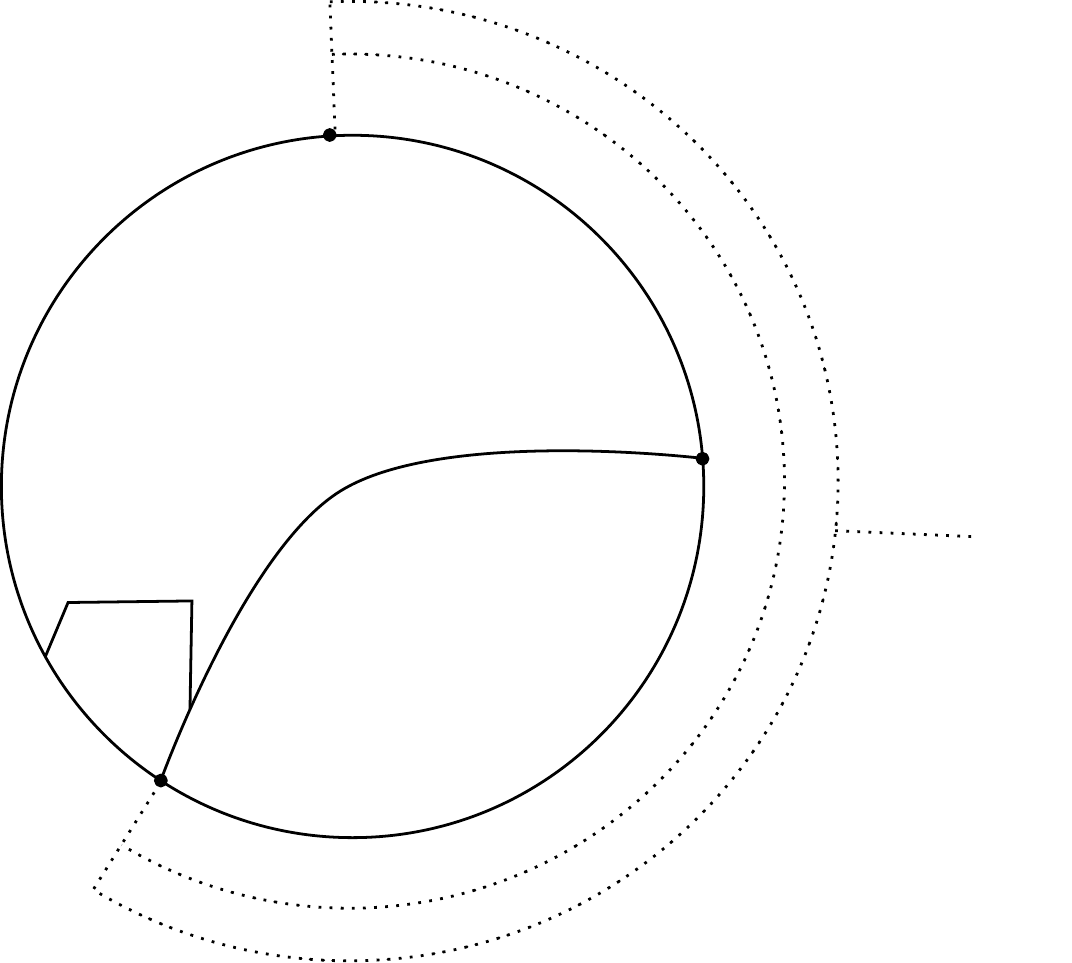}
\caption{Two nonequivalent medial graphs.}
\label{fig:final-proof}
\end{figure}

By Lemma~\ref{z-sequence} and an examination of the medial graphs of Figure~\ref{the-four-it-becomes}, we see that all are electrically distinct, and consequently
the original graph of Figure~\ref{example-k4} is weakly recoverable.

In contrast, the graph of Figure~\ref{generic-nonrecoverable} is not recoverable because in one of the cases, it is equivalent to Figure~\ref{generic-nonrecoverable-1},
which is not recoverable.  To verify this, note that we can turn a ``four-star'' into a $K_4$ as in Figure~\ref{four-star-k},
without producing negative or zero conductances.  This map is invertible (because the ``four-star'' is recoverable),
so if Figure~\ref{generic-nonrecoverable} were recoverable, then Figure~\ref{generic-nonrecoverable-1} would also be recoverable, but it is not since its medial graph
clearly contains a lens.
\begin{figure}
\centering
\def \svgwidth{4.5in}
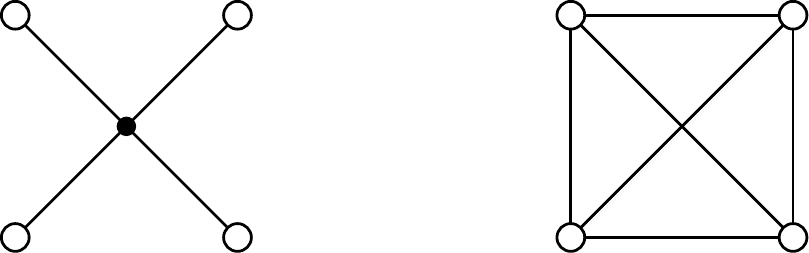
\caption{Turning a ``star'' into a complete graph.}
\label{four-star-k}
\end{figure}

However, we can say that Figure~\ref{generic-nonrecoverable} is \emph{generically recoverable}, because generically it is equivalent
to Figure~\ref{generic-nonrecoverable-2} which is weakly recoverable.  In fact, the map from conductances on Figure~\ref{generic-nonrecoverable} to response matrix
is birational.
\begin{figure}
\centering
\subfloat[~]{\label{generic-nonrecoverable}
\def \svgwidth{1.25in}
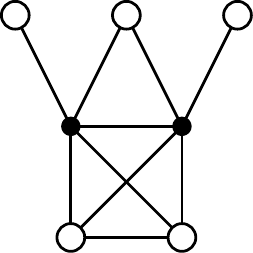}
\subfloat[~]{\label{generic-nonrecoverable-1}
\def \svgwidth{1.5in}
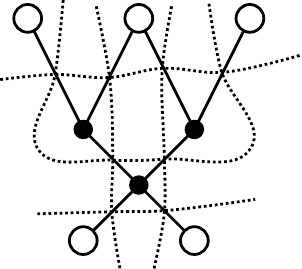}
\subfloat[~]{\label{generic-nonrecoverable-2}
\def \svgwidth{1.5in}
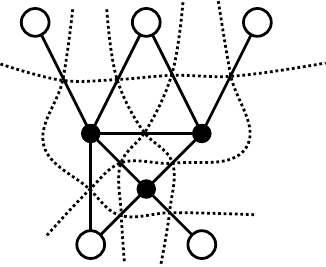}
\caption{Of these three graphs, only \ref{generic-nonrecoverable-2} is recoverable, though \ref{generic-nonrecoverable} is \emph{generically} recoverable.}
\label{fig:generic-recoverability}
\end{figure}

\subsection{Knot Theory}
Another place where negative conductances naturally appear is in knot-theoretic studies of tangles.  As explained in \cite{GoldmanKauffmanElectric},
we can associate a medial graph (with conductances) to a picture of a knot-theoretic tangle, by viewing the strands in the tangle as the geodesics,
and assigning a conductance of $\pm 1$ at each crossing, with sign determined by which of the two strands is on top.  The electrical
behavior of the network is invariant under the Reidemeister moves, making it a topological invariant.  In the case of knots, which
can be thought of as tangles with no loose ends, the resulting electrical network has no boundary vertices, so no useful information is obtained this way.
In the case of tangles with four loose ends,
the resulting electrical network has two boundary vertices, so its electrical behavior is summarized by a single number, the resistance.  For the well-known
family of \emph{rational tangles} (see \cite{GoldmanKauffman}), the resistance is a complete invariant of the tangle.  In particular, up to equivalence there
is a unique rational tangle for every element of $\mathbb{Q} \cup \{\infty\}$.

Combining facts from \cite{GoldmanKauffmanElectric} and \cite{GoldmanKauffman} with our results on signed conductances, one easily obtains the following
novelty theorem:
\begin{theorem}\label{silly-theorem}
Fix some pseudoline arrangement (i.e., critical medial graph) with $n$ pseudolines.
View the geodesics as strands in a knot diagram or tangle diagram.  At each crossing
in the original pseudoline arrangement, substitute a rational tangle not equivalent to $0$ or $\infty$.  Interpret the resulting diagram as a tangle with
$2n$ loose ends.  Then given the initial pseudoline arrangement and the resulting tangle, one can say which rational tangle was substituted at each vertex.
\end{theorem}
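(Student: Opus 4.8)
The plan is to translate this knot-theoretic statement entirely into the language of Theorem~\ref{main-result}. The given pseudoline arrangement is, by definition, a critical medial graph $M$, and substituting a rational tangle at each crossing produces a tangle diagram $T$ whose strands are precisely the geodesics of $M$. By the correspondence of \cite{GoldmanKauffmanElectric}, $T$ determines an electrical network whose electrical behaviour is invariant under the Reidemeister moves, hence is a genuine topological invariant of the tangle. The crucial observation is that each substituted rational tangle, regarded as a two-terminal electrical network, behaves as a single conductor: by \cite{GoldmanKauffman} its fraction is some $r_v \in \mathbb{Q} \cup \{\infty\}$, and since the substituted tangle is equivalent to neither $0$ nor $\infty$, we have $r_v \notin \{0,\infty\}$, so the associated conductance $c_v = 1/r_v$ is a nonzero (finite) rational number. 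Consequently, from the outside, replacing the crossing at $v$ by the rational tangle is electrically indistinguishable from placing a single linear conductor of conductance $c_v$ there, and the whole tangle $T$ is electrically equivalent to $M$ equipped with the (possibly signed) linear conductance structure $\gamma_v(x) = c_v x$.

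The recovery then proceeds in three steps. First I would compute the boundary relation $\Xi$ of $T$ from the tangle; by the preceding paragraph this is exactly the boundary relation of $M$ together with the conductances $\{c_v\}$. Second, I would invoke strong recoverability: since $M$ is a pseudoline arrangement it is critical and hence semicritical, so Theorem~\ref{main-result} applies and tells us that the conductances $c_v$ are uniquely determined by $\Xi$. Here I use the extension noted at the start of this section, that the proof of Theorem~\ref{main-result} goes through verbatim for linear conductances valued in $\mathbb{C} \setminus \{0\}$ (indeed in any abelian group), so the fact that the $c_v$ may be negative or otherwise nonpositive causes no difficulty. Third, from each recovered conductance $c_v$ I recover the fraction $r_v = 1/c_v$, and since by \cite{GoldmanKauffman} the fraction is a complete invariant of rational tangles, $r_v$ determines precisely which rational tangle was substituted at $v$. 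This would establish the theorem.

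The hard part will be the bookkeeping that justifies the electrical equivalence between the substituted tangle and the arrangement $M$ with scalar conductances. In particular, one must check that a rational-tangle box, with its two pairs of loose ends, really does present a single well-defined conductance to the rest of the network, independent of its internal crossing structure, and that the checkerboard colouring of the full diagram $T$ restricts consistently to the canonical colouring of $M$, so that the conductances $c_v$ are attached to the correct interior vertices. A secondary point to verify is that excluding the tangles $0$ and $\infty$ is exactly what keeps every $c_v$ in $\mathbb{C}\setminus\{0\}$, so that the bijectivity hypothesis underlying Theorem~\ref{main-result} is satisfied. Once this local-to-global compatibility is in place, the two main ingredients---strong recoverability of critical medial graphs over $\mathbb{C}\setminus\{0\}$ and the completeness of the fraction invariant for rational tangles---combine immediately to give the result.
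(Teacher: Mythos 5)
Your proposal is correct and matches the paper's intended argument essentially verbatim: the paper likewise obtains this theorem by combining the Goldman--Kauffman correspondence (rational tangle $\leftrightarrow$ a single two-terminal conductor whose fraction, excluded from $\{0,\infty\}$, yields a conductance in $\mathbb{Q}\setminus\{0\}$) with the extension of Theorem~\ref{main-result} to linear conductances in $\mathbb{C}\setminus\{0\}$, and then uses completeness of the fraction invariant to identify each substituted tangle. The paper offers no more detail than your sketch, so no gap is introduced by your treatment of the bookkeeping points, which you correctly flag as the only items needing verification.
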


As an example, the two tangles in Figure~\ref{tangles-ingredients}(a) are both rational tangles not equivalent to $0$ or $\infty$.
Substituting them into the pseudoline arrangement of Figure~\ref{tangles-ingredients}(b) in two different ways, Theorem~\ref{silly-theorem} implies
that the two tangles in Figure~\ref{tangles-final} are not equivalent.

\begin{figure}
\centering
\def \svgwidth{3in}
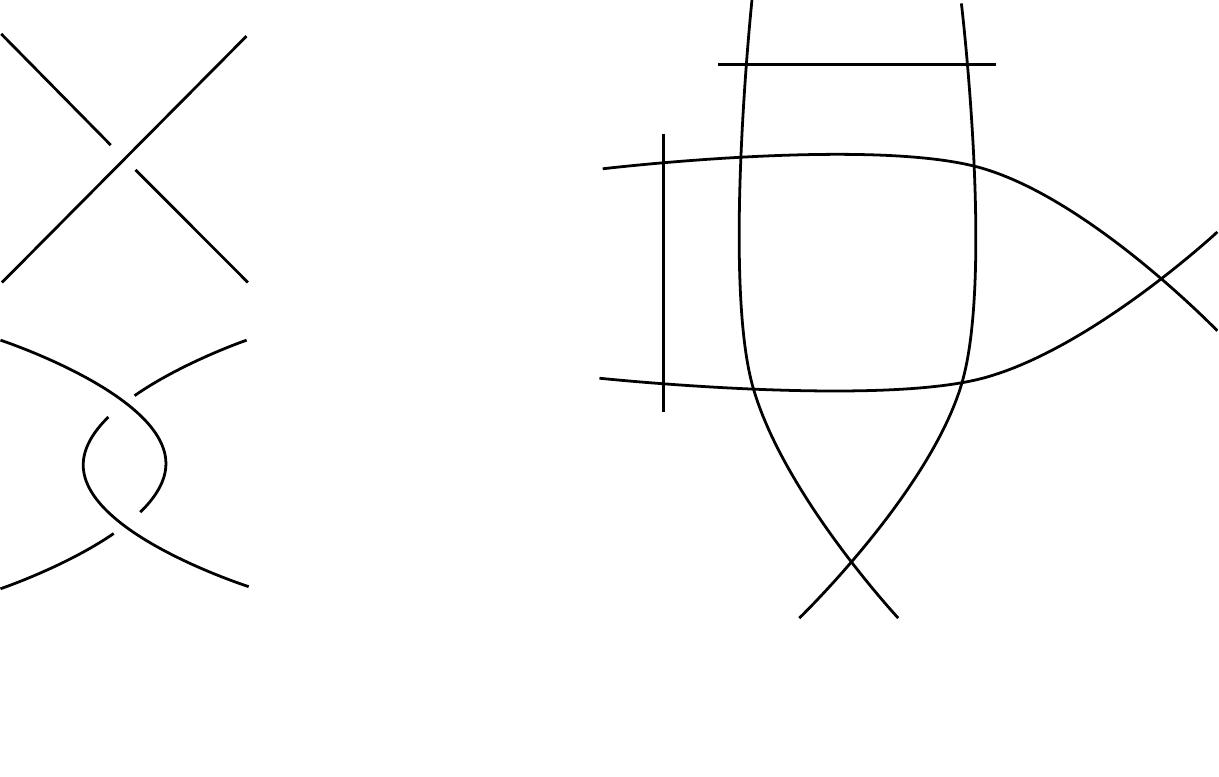
\caption{Two rational tangles, and a pseudoline arrangement.}
\label{tangles-ingredients}
\end{figure}
\begin{figure}
\centering
\def \svgwidth{4in}
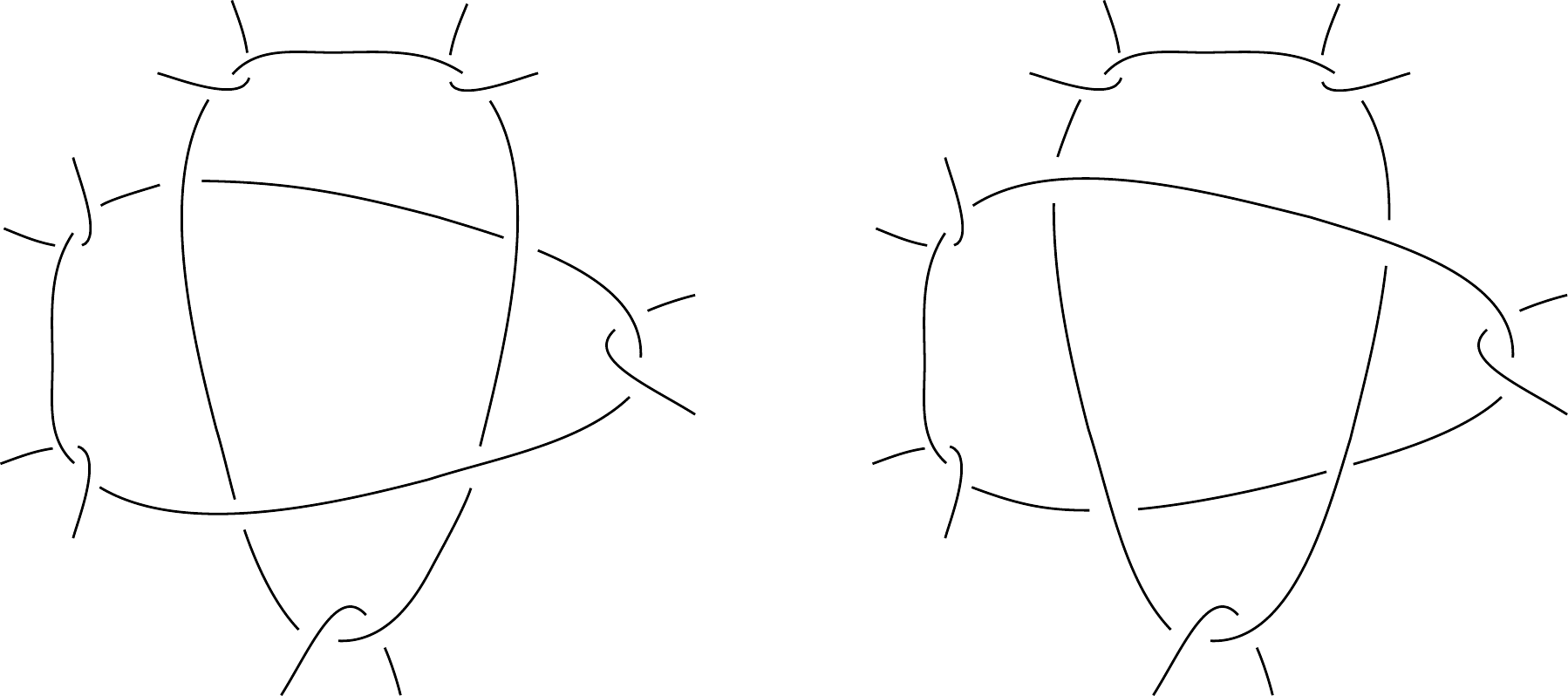
\caption{Some eight-strand tangles obtained by replacing the intersections of Figure \ref{tangles-ingredients}(b) with rational tangles.}
\label{tangles-final}
\end{figure}

It might be interesting to consider whether
Theorem~\ref{silly-theorem} remains true if ``rational tangle'' is replaced by some broader class of 2-strand tangles.

\section{The Electrical Linear Group Perspective}\label{sec:el2n}
In \cite{ElectricalLieTheory}, Lam and Pylyavskyy define a Lie algebra $\mathfrak{el}_{2n}$ whose associated Lie group $EL_{2n}$ is called the \emph{electrical linear group}.  This group has generators $u_i(a)$ for $1 \le i \le 2n$ and $a \in \mathbb{R}$, and these generators happen to satisfy the equations
\begin{equation} u_i(a)u_i(b) = u_i(a+b) \label{el-rel-1}\end{equation}
\begin{equation} u_i(a)u_j(b) = u_j(b)u_i(a)\label{el-rel-2}\end{equation}
when $|i - j| > 1$, and
\begin{equation} u_i(a)u_j(b)u_i(c) = u_j(bc/(a+c+abc)) u_i(a+c+abc) u_j(ac/(a+c+abc)) \label{el-rel-3}\end{equation}
when $|i - j| = 1$ and the denominators make sense.  This group is isomorphic to the symplectic group $Sp_{2n}$.  One isomorphism goes as follows: let $e_{ij}$ denote
the $n \times n$ matrix with a 1 in the $i$th row and $j$th column, and zeros elsewhere.  Then in block notation, we will take
\[ u_{2i-1}(a) = \begin{pmatrix}
I & -ae_{ii} \\ 0 & I
\end{pmatrix} \in Sp_{2n}
\]
for $1 \le i \le n$,
\[ u_{2i}(a) = \begin{pmatrix}
I & 0 \\
-a(e_{ii} + e_{i+1,i+1} - e_{i,i+1} - e_{i+1,i}) & I \\
\end{pmatrix} \in Sp_{2n}\]
for $1 \le i < n$, and
\[ u_{2n}(a) = \begin{pmatrix}
I & 0 \\
-ae_{nn} & I
\end{pmatrix} \in Sp_{2n}.\]
Within the electrical linear group $EL_{2n}$ we have the \emph{electrically nonnegative part} $(EL_{2n})_{\ge 0}$ which is the submonoid generated
by the $u_i(a)$ for $a \ge 0$.  The following fact is essentially Proposition 2.4 in \cite{ElectricalLieTheory}.
\begin{proposition}\label{variant-one}
For $\mathbf{i} = i_1 \ldots i_l$ a word in the alphabet $\{1,\ldots,2n\}$, let $\psi_{\mathbf{i}} : \mathbb{R}^l_{>0} \to EL_{2n}$ be the map
given by
\[ \psi_{\mathbf{i}}(a_1,\ldots,a_l) = u_{i_1}(a_1) \cdots u_{i_n}(a_n) \in EL_{2n}.\]
Then the following are true:
\begin{itemize}
\item $\psi_{\mathbf{i}}$
is injective if and only if $s_{i_1} \cdots s_{i_n}$ is a reduced word in the symmetric group $S_{2n+1}$, where $s_i$ denotes the transposition
of $i$ and $i+1$.
\item When $s_{i_1} \cdots s_{i_n}$ is a reduced word for some $w \in S_{2n+1}$, the image of $\psi_{\mathbf{i}}$
depends only on $w$.
\item Letting $E(w)$ denote this image, $(EL_{2n})_{\ge 0}$ can be written as a disjoint union $(EL_{2n})_{\ge 0} = \coprod_{w \in S_{2n+1}} E(w)$.
\end{itemize}
\end{proposition}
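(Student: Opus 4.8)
The plan is to translate the entire statement into the language of medial graphs, where it becomes little more than a restatement of the recovery results already proved. The basic dictionary is the one implicit in the definition of the generators $u_i(a)$: a word $\mathbf{i} = i_1 \cdots i_l$ together with parameters $(a_1,\ldots,a_l)$ encodes a wiring diagram on $2n+1$ strands, which we read as a medial graph $M_{\mathbf{i}}$ whose geodesics are the strands and whose $l$ interior vertices are the crossings, carrying the conductance structure $\gamma$ that assigns the linear conductance $a_k$ to the $k$-th crossing. The matrices $u_i(a)$ are precisely the elementary transformations that a single crossing performs on boundary data, so the product $\psi_{\mathbf{i}}(a_1,\ldots,a_l)$ is a faithful record of the boundary relation $\Xi_\gamma$ of $(M_{\mathbf{i}},\gamma)$: two parameter vectors give the same group element if and only if they give the same boundary relation. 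Under this dictionary, $\mathbf{i}$ being a reduced word for $w \in S_{2n+1}$ is exactly the condition that each pair of strands cross at most once, i.e.\ that $M_{\mathbf{i}}$ is a critical medial graph, and the permutation $w$ records which boundary vertex each geodesic connects to.

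First I would dispose of the injectivity claim. If $\mathbf{i}$ is reduced then $M_{\mathbf{i}}$ is critical, hence semicritical, so by Theorem~\ref{main-result} it is strongly recoverable; since a conductance structure on $M_{\mathbf{i}}$ is just a choice of the $l$ numbers $a_k$, strong recoverability says exactly that $\psi_{\mathbf{i}}$ is injective. Conversely, if $\mathbf{i}$ is not reduced then some pair of strands crosses twice, so $M_{\mathbf{i}}$ contains a $2$-lens and is not semicritical; by Theorem~\ref{main-result} it is not weakly recoverable, meaning two distinct positive linear structures --- two distinct points of $\mathbb{R}^l_{>0}$ --- yield the same boundary relation, so $\psi_{\mathbf{i}}$ fails to be injective. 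For the second bullet, two reduced words for the same $w$ are related by the commutation and braid moves of the symmetric group (Matsumoto and Tits); these correspond respectively to relation (\ref{el-rel-2}) and to relation (\ref{el-rel-3}), which is the Y-$\Delta$ motion. Since the Y-$\Delta$ substitution of the lemma in Section~\ref{sec:linear} is a bijection of $\mathbb{R}^3_{>0}$ with itself, each move carries the positive image onto the positive image, so $E(w) = \psi_{\mathbf{i}}(\mathbb{R}^l_{>0})$ does not depend on the reduced word chosen.

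For the disjoint-union statement there are two halves. That the $E(w)$ cover $(EL_{2n})_{\ge 0}$ is a purely Coxeter-theoretic reduction: any element is a product of generators with $a_k \ge 0$; dropping the factors with $a_k = 0$ (each equal to the identity) leaves a word with positive parameters, and whenever that word is not reduced a standard exchange argument produces, after commutation and braid moves, two adjacent equal letters $u_i(a)u_i(b)$, which relation (\ref{el-rel-1}) combines into the single positive factor $u_i(a+b)$; iterating reaches a reduced word with positive parameters, placing the element in some $E(w)$. The disjointness is where the machinery of this paper pays off: if $g \in E(w) \cap E(w')$ then $g$ is the boundary relation of two critical medial graphs $M_{\mathbf{i}}$ and $M_{\mathbf{i}'}$ realizing permutations $w$ and $w'$, and by Lemma~\ref{z-sequence} two critical medial graphs with equal boundary relation are equivalent, so their geodesics pair the boundary vertices identically; this pairing is precisely the underlying permutation, forcing $w = w'$. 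The main obstacle is the dictionary set up in the first paragraph --- pinning down that the group element is a \emph{faithful} record of the boundary relation (not merely determined by it), so that the recovery theorems, which speak only of boundary relations, translate into statements about equality of group elements. Once this is secured, Theorem~\ref{main-result} and Lemma~\ref{z-sequence} do all the real work.
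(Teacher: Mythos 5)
Your proposal is correct and takes essentially the same route as the paper, which does not actually prove Proposition~\ref{variant-one} but cites Proposition~2.4 of \cite{ElectricalLieTheory} and sketches in a footnote precisely the dictionary you spell out: injectivity of $\psi_{\mathbf{i}}$ as recoverability of the wiring-diagram medial graph (criticality plus Theorem~\ref{main-result}), independence of the choice of reduced word via the relations (\ref{el-rel-1})--(\ref{el-rel-3}) together with the Coxeter-theoretic theorem on reduced words, and disjointness via Lemma~\ref{z-sequence}. Your closing caution about faithfulness is well placed and is the one point deserving emphasis: under the naive ``grounded network built by adding spikes and edges'' reading the claim genuinely fails (already for $n=1$ and the reduced word $u_1(a)u_2(b)u_1(c)$, the innermost spike dangles and is invisible to the one-sided boundary relation even though it is visible in the group element), so the transfer/strip reading you adopt --- in which the two-sided boundary relation of the wiring diagram is essentially the graph of the group element --- is the identification that must be pinned down, exactly as the paper's footnote tacitly assumes.
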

This proposition is of interest for a couple of reasons.  First, it is a disguised restatement of the main results of \cite{ReseauxElectriques} and
\cite{CIM}.\footnote{One thinks of the $u_{2i-1}(a)$ as the operations of adding
a boundary spike of resistance $a$ at the $i$th boundary vertex, and $u_{2i}(a)$ as the operation
of adding a boundary-to-boundary edge of conductance $a$ between the $i$th and $i+1$st boundary vertices.  Relation (\ref{el-rel-3}) corresponds to the
Y-$\Delta$ transformation.
Injectivity of $\psi_{\mathbf{i}}$ becomes
equivalent to the weak recoverability of a certain graph.  The first bullet point of the proposition
is tantamount to the fact that a graph is weakly recoverable if and only if its
medial graph is critical.  The second bullet point follows easily from the relations (\ref{el-rel-1}-\ref{el-rel-3}), together with a well-known theorem
about reduced words in Coxeter groups.  The third bullet point has to do with Lemma~\ref{z-sequence}.}
Second, it is completely analogous to similar facts about factorization schemes for nonnegative upper triangular unipotent matrices, which we now recall.
Let $U$ be the group of upper triangular unipotent matrices in $SL_{2n+1}$, and let
$U_{\ge 0}$ be the submonoid of totally nonnegative ones.  (A matrix is \emph{totally nonnegative}
if every minor is nonnegative.  We refer the reader
to \cite{TotalPosTandP} or \cite{LusztigTotalPosReductiveGroups} for more information on totally nonnegative matrices, and the generalizations of this notion
to other semisimple Lie groups.)  The group $U$ is generated by elementary matrices of the form $x_i(a)
= I + ae_{i,i+1}$, where $a \in \mathbb{R}$ and $1 \le i < n$.  
These generators satisfy relations analogous to (\ref{el-rel-1}-\ref{el-rel-3}), namely
\begin{equation*} x_i(a)x_i(b) = x_i(a+b) \end{equation*}
\begin{equation*} x_i(a)x_j(b) = x_j(b)x_i(a) \end{equation*}
when $|i - j| > 1$, and
\begin{equation*} x_i(a)x_j(b)x_i(c) = x_j(bc/(a+c)) x_i(a+c) x_j(ac/(a+c)) \end{equation*}
when $|i - j| = 1$ and the denominators make sense.
The submonoid $U_{\ge 0}$ turns out to be generated by $x_i(a)$ for $a > 0$, just as $(EL_{2n})_{\ge 0}$ is generated by the $u_i(a)$ for $a > 0$.  Moreover, the following
analogue of Proposition~\ref{variant-one} holds:
\begin{proposition}\label{variant-two}
For $\mathbf{i} = i_1 \ldots i_l$ a word in the alphabet $\{1,\ldots,2n\}$, let $\psi_{\mathbf{i}} : \mathbb{R}^l_{>0} \to U$ be the map
given by
\[ \psi_{\mathbf{i}}(a_1,\ldots,a_l) = x_{i_1}(a_1) \cdots x_{i_n}(a_n) \in EL_{2n}.\]
Then the following are true:
\begin{itemize}
\item $\psi_{\mathbf{i}}$
is injective if and only if $s_{i_1} \cdots s_{i_n}$ is a reduced word in the symmetric group $S_{2n+1}$, where $s_i$ denotes the transposition
of $i$ and $i+1$.
\item When $s_{i_1} \cdots s_{i_n}$ is a reduced word for some $w \in S_{2n+1}$, the image of $\psi_{\mathbf{i}}$
depends only on $w$.
\item Letting $E(w)$ denote this image, $U_{\ge 0}$ can be written as a disjoint union $U_{\ge 0} = \coprod_{w \in S_{2n+1}} E(w)$.
\end{itemize}
\end{proposition}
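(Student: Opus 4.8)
The plan is to prove Proposition~\ref{variant-one} --- the assertions about the electrical generators $u_i(a)$ and $(EL_{2n})_{\ge 0}$ --- entirely within the language of circular planar electrical networks, where its three bullets reduce, respectively, to Theorem~\ref{main-result}, the standard reduced-word/wiring-diagram dictionary, and Lemma~\ref{z-sequence}. Following the interpretation in the footnote above, the first and most laborious step is to set up the dictionary between the algebra of $EL_{2n}$ and network operations: read $u_{2i-1}(a)$ as attaching a boundary spike of resistance $a$ at the $i$th boundary vertex, and $u_{2i}(a)$ as attaching a boundary-to-boundary edge of conductance $a$ between the $i$th and $(i+1)$st boundary vertices. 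A word $\mathbf{i}=i_1\cdots i_l$ then prescribes a way to build up a circular planar graph $\Gamma_{\mathbf{i}}$ from the empty boundary, whose medial graph $M_{\mathbf{i}}$ is a wiring diagram with exactly $l$ crossings, one for each letter, carrying $a_1,\dots,a_l$ as its conductances. The identity I must verify is that $\psi_{\mathbf{i}}(a_1,\dots,a_l)=u_{i_1}(a_1)\cdots u_{i_l}(a_l)$ is precisely the operator recording the boundary (voltage--covoltage) data of $(\Gamma_{\mathbf{i}},\gamma)$; under this identification (\ref{el-rel-1}) is the merging of two repeated operations, (\ref{el-rel-2}) is the commuting of operations acting far apart on the boundary, and (\ref{el-rel-3}) is the Y-$\Delta$ transformation.

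With the dictionary in hand, the \emph{first bullet} is immediate: $\psi_{\mathbf{i}}$ is injective precisely when the conductances of $\Gamma_{\mathbf{i}}$ are determined by its boundary data, i.e. when $\Gamma_{\mathbf{i}}$ is weakly recoverable. By Theorem~\ref{main-result} this happens iff $M_{\mathbf{i}}$ is semicritical; since $M_{\mathbf{i}}$ is already the wiring diagram of $\mathbf{i}$, whose strands are monotone and never self-cross, semicriticality is equivalent to no two strands crossing twice, which is the classical criterion that $s_{i_1}\cdots s_{i_l}$ be a reduced word for some $w\in S_{2n+1}$. For the \emph{second bullet} I would invoke the well-known Matsumoto--Tits theorem: two reduced words for the same $w$ differ by a sequence of commutations $s_is_j=s_js_i$ ($|i-j|>1$) and braid moves $s_is_js_i=s_js_is_j$ ($|i-j|=1$). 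Each commutation is an instance of (\ref{el-rel-2}) and each braid move an instance of (\ref{el-rel-3}); because the Y-$\Delta$ formulas of Section~\ref{sec:linear} show that (\ref{el-rel-3}) restricts to a \emph{bijection} $\mathbb{R}^3_{>0}\to\mathbb{R}^3_{>0}$ (and (\ref{el-rel-2}) trivially does), these moves carry the image of $\psi_{\mathbf{i}}$ bijectively onto that of the new word, so $E(w)$ depends only on $w$.

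For the \emph{third bullet}, the equality $(EL_{2n})_{\ge 0}=\bigcup_w E(w)$ holds because every element of $(EL_{2n})_{\ge 0}$ is by definition a product of generators $u_i(a)$ with $a\ge 0$; deleting the factors with $a=0$ (each equal to the identity by (\ref{el-rel-1})) writes it as $\psi_{\mathbf{i}}$ of a word with strictly positive parameters, and repeatedly applying the relations to shorten a non-reduced word places it in some $E(w)$. Disjointness is exactly where Lemma~\ref{z-sequence} is needed: the wiring-diagram construction makes the equivalence class of the critical medial graph $M_{\mathbf{i}}$ record precisely the permutation $w$, so if $g\in E(w_1)\cap E(w_2)$ then two networks with medial graphs $M_{\mathbf{i}_1}$ (realizing $w_1$) and $M_{\mathbf{i}_2}$ (realizing $w_2$) share the same boundary relation; by Lemma~\ref{z-sequence} these medial graphs are equivalent, forcing $w_1=w_2$. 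Hence the cells $E(w)$ partition $(EL_{2n})_{\ge 0}$.

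The step I expect to be the genuine obstacle is the dictionary-setup of the first paragraph, not the three bullets. One must check carefully that the matrix multiplication defining $\psi_{\mathbf{i}}$ really computes the boundary operator of the incrementally built network --- that ``adding a spike'' and ``adding a boundary edge'' correspond to multiplication by the correct $u_i(a)$ on the correct side and in the correct order, and that the crossings of $M_{\mathbf{i}}$ biject with the letters with the stated conductances. A smaller subtlety is the clause ``when the denominators make sense'' in (\ref{el-rel-3}): on the positive cone the denominators $a+c+abc$ are automatically positive, so the relation is defined everywhere on $\mathbb{R}^3_{>0}$, but one must still confirm that the degenerate products (those involving some $u_i(0)$, or non-reduced words) are absorbed into strictly lower cells rather than producing overlaps --- which is precisely guaranteed by the reduced-word reduction together with the injectivity supplied by Lemma~\ref{z-sequence}.
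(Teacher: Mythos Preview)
You have addressed the wrong proposition. The statement you were asked to prove is Proposition~\ref{variant-two}, which concerns the elementary generators $x_i(a)=I+ae_{i,i+1}$ of the group $U$ of upper-triangular unipotent matrices in $SL_{2n+1}$ and its totally nonnegative part $U_{\ge 0}$. Your entire plan is instead a proof of Proposition~\ref{variant-one}, the electrical analogue involving the generators $u_i(a)$ of $EL_{2n}$ and the monoid $(EL_{2n})_{\ge 0}$. The network dictionary you set up --- spikes, boundary edges, medial graphs, Y-$\Delta$ --- is tied to the $u_i$, not the $x_i$; Theorem~\ref{main-result} and Lemma~\ref{z-sequence} are statements about electrical boundary relations and do not say anything about products of elementary matrices $x_i(a)$ acting on $\mathbb{R}^{2n+1}$. (The confusion may have been encouraged by the typo ``$\in EL_{2n}$'' in the displayed definition of $\psi_{\mathbf{i}}$, but the codomain $U$, the generators $x_i$, and the third bullet about $U_{\ge 0}$ make clear which proposition this is.)

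As for what the paper actually does with Proposition~\ref{variant-two}: it does not prove it at all. The paper simply records it as ``essentially Proposition~2.7 of \cite{LusztigTotalPosReductiveGroups}'' and moves on. So there is no in-paper argument to reconstruct; the intended ``proof'' is a citation to Lusztig, whose methods (Bruhat decomposition, canonical-basis style arguments) are Lie-theoretic rather than network-theoretic. If your goal had instead been Proposition~\ref{variant-one}, your outline matches the paper's own footnoted sketch quite closely, and the only substantive work is indeed the dictionary you flag; but that is a different proposition.
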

This is essentially Proposition 2.7 of \cite{LusztigTotalPosReductiveGroups}.

Lusztig's proof of Proposition~\ref{variant-two} also establishes the following strengthening of the first bullet point of Proposition~\ref{variant-two}:
\begin{proposition}\label{variant-three}
If $\mathbf{i} = i_1 \ldots i_l$ is a reduced word, then the map $\mathbb{R}_{\ne 0}^l \to U$
\[ (a_1, \ldots, a_l) \mapsto x_{i_1}(a_1) \cdots x_{i_l}(a_l)\]
is injective.
\end{proposition}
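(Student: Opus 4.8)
The plan is to recover each parameter $a_k$ from the matrix $u = x_{i_1}(a_1)\cdots x_{i_l}(a_l)$ as a Laurent monomial in a distinguished family of minors of $u$ — the \emph{chamber minors} of the wiring diagram of $\mathbf{i}$. The point is that these particular minors turn out to be honest monomials in the $a_k$, so they never vanish when all $a_k \neq 0$; the recovery formulas then make sense over all of $\mathbb{R}_{\ne 0}^l$, not merely on the positive orthant where Proposition~\ref{variant-two} operates. This is exactly the gap between positivity and nonvanishing that needs to be bridged.

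First I would set up the wiring diagram of $\mathbf{i}=i_1\cdots i_l$ on $2n+1$ strands, with one crossing per letter; reducedness means no two strands cross more than once. Following the chamber ansatz of Berenstein, Fomin, and Zelevinsky, to each bounded chamber one attaches a solid (flag) minor $\Delta$ of the unitriangular matrix. The key lemma — and the main obstacle — is that evaluating any chamber minor on the factorization $x_{i_1}(a_1)\cdots x_{i_l}(a_l)$ yields a single monomial $\prod_{k\in K}a_k$, the product ranging over the crossings lying below the chamber. I would establish this by induction on $l$, comparing the chamber minors of $\mathbf{i}$ with those of $i_1\cdots i_{l-1}$ and tracking the effect of the final generator $x_{i_l}(a_l)$; the essential point is that, because $\mathbf{i}$ is reduced, the two strands meeting at the last crossing have never met before, so the minor of the newly created chamber acquires the factor $a_l$ with no competing term to cancel it. For a non-reduced word this breaks down: two strands cross twice, the corresponding minor becomes a genuine polynomial in the $a_k$ that can vanish on $\mathbb{R}_{\ne 0}^l$, matching the failure of injectivity recorded in the first bullet of Proposition~\ref{variant-two}.

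With the lemma in hand, the chamber ansatz expresses each $a_k$ as a ratio of chamber minors of $u$, together with the frozen corner minors, which are identically $1$ on a unitriangular matrix. Since every chamber minor equals a product of nonzero $a_k$'s, all of them are nonzero, so each ratio is a well-defined function of the entries of $u$. Hence the whole tuple $(a_1,\ldots,a_l)$ is determined by $u$, and the factorization map is injective on $\mathbb{R}_{\ne 0}^l$, as claimed. I would sanity-check the scheme on the longest word in $SL_3$, where $u = x_1(a_1)x_2(a_2)x_1(a_3)$ has chamber minors $a_2$, $a_1a_2$, and $a_2a_3$ (writing $\Delta_{I,J}(u)$ for the minor on rows $I$, columns $J$), giving $a_2 = \Delta_{\{2\},\{3\}}(u)$, $a_1 = \Delta_{\{1\},\{3\}}(u)/\Delta_{\{2\},\{3\}}(u)$, and $a_3 = \Delta_{\{1,2\},\{2,3\}}(u)/\Delta_{\{2\},\{3\}}(u)$, all defined whenever $a_2\neq 0$.

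Finally I would note that this is a transcription of Lusztig's treatment of Proposition~\ref{variant-two} in \cite{LusztigTotalPosReductiveGroups}: positivity of the $a_k$ is used there only to ensure the chamber minors are nonzero, and the monomial lemma upgrades that to the weaker hypothesis $a_k\neq 0$. The only genuinely new ingredient is the observation that the relevant minors are monomials rather than merely subtraction-free, so the cleanest write-up isolates that as a standalone lemma and then quotes the chamber ansatz verbatim.
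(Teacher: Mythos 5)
Your proposal is correct, but note first what you are being compared against: the paper does not actually prove this proposition. It simply remarks that Lusztig's proof of Proposition~\ref{variant-two} in \cite{LusztigTotalPosReductiveGroups} ``also establishes'' the strengthening --- i.e., the paper's entire argument is the observation that positivity enters Lusztig's recovery of the parameters only through the nonvanishing of certain denominators, and nonvanishing survives on $\mathbb{R}_{\ne 0}^l$. Your chamber-ansatz route makes that one-line observation explicit and self-contained: you identify the denominators as chamber minors of the wiring diagram, prove they are single monomials $\prod_{k\in K}a_k$ (hence nonzero whenever all $a_k\ne 0$), and then invoke the Berenstein--Fomin--Zelevinsky recovery of each $a_k$ as a Laurent monomial in these minors. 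This is a genuinely different write-up of the same underlying mechanism; what it buys is an explicit certificate of injectivity, a transparent explanation of where reducedness is used, and a proof that works directly for an arbitrary reduced word (be careful here when citing: the cleanest published form of the chamber ansatz is stated for reduced words of $w_0$, so for general $w$ you should either quote the appropriate generalization or rely on your own lemma throughout). What Lusztig's route buys is brevity, since his induction recovers coordinates rationally without naming the minors.

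Two small repairs before this is airtight. First, your inductive proof of the monomial lemma is the thinnest step as sketched: appending $x_{i_l}(a_l)$ does not just create one new chamber, it perturbs \emph{all} minors via $\Delta_{I,J}\mapsto \Delta_{I,J}+a_l\Delta_{I,J'}$ (when $i_l+1\in J$, $i_l\notin J$), so the bookkeeping is heavier than ``the new chamber acquires a factor $a_l$.'' The cleaner argument is non-inductive: by the Lindstr\"om--Gessel--Viennot lemma each minor of the product is a sum over vertex-disjoint path families in the planar network, and for a chamber minor of a \emph{reduced} diagram the family is unique (two strands meet at most once, so the routing is forced), giving the single monomial with coefficient $+1$ at a stroke. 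Second, make explicit the step you are using implicitly: the chamber-ansatz identity $a_k = R_k(\Delta(u))$ is proved on the positive orthant, but both sides are rational functions of $(a_1,\ldots,a_l)$, so agreement on a nonempty open set extends the identity to all of $\mathbb{R}_{\ne 0}^l$, where your monomial lemma guarantees the denominators are nonzero. With those two adjustments the proof is complete, and your $SL_3$ sanity check is accurate as computed.
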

The image of this map may depend on the choice of the reduced word, however.  The analogous statement in the setting of electrical networks
is true, though it doesn't seem to be noted in \cite{ElectricalLieTheory}:
\begin{proposition}\label{variant-four}
If $\mathbf{i} = i_1 \ldots i_l$ is a reduced word, then the map $\mathbb{R}_{\ne 0}^l \to EL_{2n+1}$
\[ (a_1, \ldots, a_l) \mapsto u_{i_1}(a_1) \cdots u_{i_l}(a_l)\]
is an injection.
\end{proposition}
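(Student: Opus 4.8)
The plan is to mirror the structure of Proposition~\ref{variant-one}, replacing weak recoverability with the strong recoverability supplied by Theorem~\ref{main-result}, and thereby obtain the electrical analogue of Lusztig's Proposition~\ref{variant-three}. The first step is to make explicit the dictionary sketched in the footnote to Proposition~\ref{variant-one}: a word $\mathbf{i} = i_1 \cdots i_l$ together with parameters $(a_1,\ldots,a_l)$ determines a circular planar network $\Gamma_{\mathbf{i}}$ assembled by the operations of adding a boundary spike (for each odd letter $2i-1$) and a boundary-to-boundary edge (for each even letter $2i$), with the $a_j$ serving as the conductances. The crucial point is that the \emph{graph} $\Gamma_{\mathbf{i}}$ depends only on $\mathbf{i}$, not on the parameters; and since each $a_j \in \mathbb{R}_{\ne 0}$, every edge is genuinely present, so all the networks $(\Gamma_{\mathbf{i}},(a_j))$ obtained by varying $(a_j)$ share one and the same underlying graph. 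I would also take from Proposition~\ref{variant-one} and its proof the fact that $\mathbf{i}$ is reduced if and only if the medial graph of $\Gamma_{\mathbf{i}}$ is critical; so for a reduced word, $\Gamma_{\mathbf{i}}$ has a critical, hence semicritical, medial graph.

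The second step is to identify the group element $u_{i_1}(a_1)\cdots u_{i_l}(a_l) \in EL_{2n}$ with the boundary relation $\Xi_\gamma$ of $(\Gamma_{\mathbf{i}},(a_j))$, where $\gamma$ is the linear conductance structure $\gamma_j(x) = a_j x$. This is the content of the Lam--Pylyavskyy correspondence: each generator $u_i(a)$ is precisely the symplectic matrix recording how the corresponding elementary operation transforms the boundary response, so the ordered product records the boundary relation of the fully assembled network. Because each operation, each conductance, and the symplectic representation are defined \emph{algebraically} for all real parameters rather than only positive ones, this identification carries over verbatim when the $a_j$ are permitted to be negative. In particular, two parameter tuples produce equal group elements exactly when the corresponding networks have the same boundary relation.

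With the dictionary established the conclusion is immediate. The functions $\gamma_j(x) = a_j x$ with $a_j \neq 0$ are odd bijections, so they constitute a legitimate conductance structure in the sense of Theorem~\ref{main-result}, whose proof never used monotonicity and hence applies to signed conductances. Since the medial graph of $\Gamma_{\mathbf{i}}$ is critical, it is strongly recoverable, which means the map $\gamma \mapsto \Xi_\gamma$ is injective. The assignment $(a_1,\ldots,a_l) \mapsto \gamma$ is a mere relabelling, so the composite $(a_1,\ldots,a_l) \mapsto \Xi_\gamma = u_{i_1}(a_1)\cdots u_{i_l}(a_l)$ is injective. Thus equality of two such products forces equality of the boundary relations, hence of the conductances, hence $a_j = a_j'$ for every $j$, which is the asserted injectivity.

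The main obstacle is genuinely confined to the second step: verifying that the symplectic matrix product encodes the entire boundary relation \emph{faithfully} in the signed setting, so that equality of group elements is equivalent to equality of boundary relations. In the totally nonnegative world this encoding is standard, but one must check that no positivity was tacitly used when passing from the sequence of network operations to the matrix product; the resolution is precisely that the generators $u_i(a)$ and the relations (\ref{el-rel-1})--(\ref{el-rel-3}) are defined over all of $\mathbb{R}$, so the identification of products with boundary relations is a formal algebraic statement insensitive to the sign of the $a_j$. Once this is confirmed, everything else is bookkeeping against results already proved.
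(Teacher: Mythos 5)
Your proposal is correct and is essentially the paper's own argument: the paper proves Proposition~\ref{variant-four} in one line by calling it a disguised version of Theorem~\ref{main-result} for linear but possibly negative conductances, and your three steps (the word-to-network dictionary from the footnote to Proposition~\ref{variant-one}, reduced word $\iff$ critical medial graph, and strong recoverability applied to the linear signed conductance structure $\gamma_j(x)=a_jx$) unpack exactly that disguise. One small remark: your ``exactly when'' in the second step overstates what is needed---the argument only uses that equal group elements force equal boundary relations (the map factors through $EL_{2n}$), not the converse, and only that direction is clearly justified.
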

This is more or less a disguised version of Theorem~\ref{main-result} in the case of linear but potentially negative conductances.

More generally, we can disguise Theorem~\ref{main-result} in the following fashion.  Let $\mathbb{S}$ denote the set of all bijections $f : \mathbb{R} \to \mathbb{R}$
such that $f(0) = 0$.  For every $f \in \mathbb{S}$ and $1 \le i \le 2n$, let
$u_i(f)$ be the bijection $\mathbb{R}^{2n} \to \mathbb{R}^{2n}$ defined as follows:
\[ u_{2i-1}(f)(v_1, \ldots, v_n, c_1, \ldots, c_n) = (v_1, \ldots, v_{i-1}, v_i - f(c_i), v_{i+1}, \ldots, v_n, c_1, \ldots, c_n)\]
for $1 \le i \le n$,
\begin{align*} u_{2i}(f)&(v_1, \ldots, v_n, c_1, \ldots, c_n) \\
 &= (v_1, \ldots, v_n, c_1, \ldots, c_{i-2}, c_i - f(v_i - v_{i+1}), c_{i+1} + f(v_i - v_{i+1}), c_{i+2}, \ldots, c_n)\end{align*}
for $1 \le i < n$, and
\[ u_{2n}(f)(v_1, \ldots, v_n, c_1, \ldots, c_n) = (v_1, \ldots, v_n, c_1, \ldots, c_{n-1}, c_n - f(v_n)).\]
Note that if $f$ is multiplication by $a$ for some $a \in \mathbb{R}_{\ne 0}$,
then $u_i(f)$ is a linear function for every $i$, and the corresponding matrix is the same $u_i(a)$ as above.
In the case where $f$ is smooth, $u_i(f)$ will always be a symplectomorphism.  We do not assume that $f$ is continuous, however.

Then Theorem~\ref{main-result} implies the following generalization of Proposition~\ref{variant-four}:
\begin{proposition}\label{variant-five}
If $\mathbf{i} = i_1 \ldots i_l$ is a reduced word, then the map from $\mathbb{S}^l$ to bijections on $\mathbb{R}^{2n}$, sending
\[ (f_1, \ldots, f_l) \mapsto u_{i_1}(f_1) \circ \cdots \circ u_{i_l}(f_l), \]
is an injection.
\end{proposition}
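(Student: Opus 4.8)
The plan is to deduce this from Theorem~\ref{main-result} by making precise the ``disguise'' already sketched in the footnote to Proposition~\ref{variant-one}. The reduced word $\mathbf{i} = i_1\cdots i_l$ corresponds, via the standard dictionary between reduced words and wiring diagrams, to a pseudoline arrangement, i.e.\ a critical colored medial graph $M$: each letter $i_k$ records a crossing (an interior vertex $v_k$ of $M$), and reducedness of $s_{i_1}\cdots s_{i_l}$ is exactly the condition that no geodesic crosses itself and no two geodesics cross more than once, which is criticality. Under this dictionary a tuple $(f_1,\ldots,f_l)\in\mathbb{S}^l$ is nothing but a conductance structure $\gamma$ on $M$, with $\gamma_{v_k}$ determined by $f_k$ (after the odd-extension convention of \S\ref{sec:medial}); conversely $\gamma$ determines the tuple. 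So the assignment $(f_1,\ldots,f_l)\mapsto\gamma$ is a bijection from $\mathbb{S}^l$ onto the set of conductance structures on $M$.

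First I would verify that the composite bijection $U = u_{i_1}(f_1)\circ\cdots\circ u_{i_l}(f_l)$ of $\mathbb{R}^{2n}$ encodes exactly the boundary relation $\Xi_\gamma$. The coordinates $(v_1,\ldots,v_n,c_1,\ldots,c_n)$ record boundary voltages and boundary covoltages; building the network one edge at a time, each generator $u_{2i-1}(f)$ (a boundary spike at the $i$th vertex) and $u_{2i}(f)$ (a boundary-to-boundary edge between the $i$th and $(i+1)$st vertices) implements the consistency equation~(\ref{consistency-equation}) at the newly created crossing. A layer-stripping argument, organized as in the proof of Lemma~\ref{problem-set-up}, should show that the graph of $U$ inside $\mathbb{R}^{2n}\times\mathbb{R}^{2n}$ is in an explicit, conductance-\emph{independent} bijection with $\Xi_\gamma$ (equivalently with the voltage--covoltage relation $\Xi$ of \S\ref{sec:covoltages}). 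In particular $U$ and $\Xi_\gamma$ then determine one another.

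Granting this, the proposition is immediate. Since $M$ is critical it is semicritical, so by Theorem~\ref{main-result} it is \emph{strongly} recoverable: the map $\gamma\mapsto\Xi_\gamma$ is injective. If $U(f_1,\ldots,f_l)=U(f_1',\ldots,f_l')$, then the two tuples yield the same boundary relation $\Xi_\gamma$, hence the same conductance structure $\gamma$ by strong recoverability, hence the same tuple because $(f_1,\ldots,f_l)\mapsto\gamma$ is a bijection. This is precisely the asserted injectivity, and it recovers Proposition~\ref{variant-four} when every $f_k$ is linear.

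The hard part will be the middle step: pinning down the coordinate dictionary that identifies the graph of $U$ with $\Xi_\gamma$, and keeping the orientation and sign bookkeeping straight. The Remark in \S\ref{sec:medial} already warns that the correspondence between a single $f\in\mathbb{S}$ and the conductance at a crossing is delicate once the functions need not be odd, since one must track which of the four cells around $v_k$ plays which role and whether the crossing is a spike or a boundary edge. These are exactly the ``$\pm\gamma^{\pm}$'' ambiguities appearing at the end of the proof of Theorem~\ref{main-result}, and they must be resolved consistently so that the original tuple $(f_1,\ldots,f_l)$ is genuinely recovered rather than some signed or reflected variant of it.
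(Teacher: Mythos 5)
Your proposal is correct and is essentially the paper's own (largely implicit) argument: the paper offers no proof of Proposition~\ref{variant-five} beyond asserting that Theorem~\ref{main-result} implies it via the spike/boundary-edge dictionary sketched in the footnote to Proposition~\ref{variant-one}, and that is precisely the reduced-word--to--critical-medial-graph translation you spell out. Your two refinements --- working with the full graph of $U$ rather than merely the image of the empty network's boundary relation (which genuinely loses information, e.g.\ a single boundary spike has $\Lambda = \{(v,0)\}$ independent of $f$, while $u_1(f)$ itself determines $f$), and flagging the $\pm\gamma^{\pm}$/oddness bookkeeping from the Remark in \S\ref{sec:medial} --- are exactly the details the paper leaves to the reader.
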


Interestingly, there is also an analog of this for the setting of upper triangular unipotent matrices.  For $1 \le i \le 2n$, let $x_i(f)$ be the
bijection $\mathbb{R}^{2n+1} \to \mathbb{R}^{2n+1}$ defined as
\[ x_i(f)(z_1, \ldots, z_n) = (z_1, \ldots, z_{i-1}, z_i + f(z_{i+1}), z_{i+1}, \ldots, z_n).\]
In the case where $f$ is multiplication by some $a \in \mathbb{R}_{\ne 0}$, the map $x_i(f) : \mathbb{R}^{2n+1} \to \mathbb{R}^{2n+1}$ is linear and the corresponding
matrix is $x_i(a)$ as above.
\begin{proposition}\label{variant-six}
If $\mathbf{i} = i_1 \ldots i_l$ is a reduced word, then the map from $\mathbb{S}^l$ to bijections on $\mathbb{R}^{2n+1}$, sending
\[ (f_1, \ldots, f_l) \mapsto x_{i_1}(f_1) \circ \cdots \circ x_{i_l}(f_l) \]
is an injection.
\end{proposition}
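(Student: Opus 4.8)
The plan is to realize the composite bijection as the boundary relation of a labelling problem on a critical medial graph, and then to recover the individual functions $f_k$ by exactly the layer-stripping argument that proves Theorem~\ref{main-result}. This parallels the way Proposition~\ref{variant-five} is obtained from Theorem~\ref{main-result}; the only thing that changes is the local relation imposed at each crossing, and the whole point will be that this relation still has the one decisive property that the machinery of Sections~\ref{sec:info-prop}--\ref{sec:nonlinear-recovery} needs, namely bijectivity.

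First I would translate the word $\mathbf{i}$ into geometry. Since $\mathbf{i}$ is reduced, the transpositions $s_{i_1}\cdots s_{i_l}$ trace out a wiring diagram in which no two strands cross more than once; this is a simple pseudoline arrangement, hence (after adding a boundary curve) a critical medial graph $M$ whose interior vertices are in bijection with the letters of $\mathbf{i}$. The coordinates $z_1,\dots,z_{2n+1}$ being transformed by $X = x_{i_1}(f_1)\circ\cdots\circ x_{i_l}(f_l)$ are recorded on the chambers of the arrangement at the two ends of the diagram, i.e.\ on the boundary cells of $M$, and the knowledge of $X$ is exactly the knowledge of which tuples of boundary values are mutually consistent. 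In other words $X$ is a repackaging of a boundary relation $\Xi$ in the sense of Section~\ref{sec:medial}, with $f_k$ playing the role of the conductance at the $k$th interior vertex.

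The key step is to write down the relation imposed at a crossing by the factor $x_i(f)$ and to check that it is of the admissible type. Because $x_i(f)$ alters only the $i$th coordinate, adding $f$ of the $(i+1)$st, the relation it forces among the four chamber values $w,x,y,z$ meeting at the corresponding vertex has the shape $\phi(z)-\phi(x) = \pm f(\phi(w)-\phi(y))$, the exact analogue of (\ref{consistency-equation}); the asymmetry between the two coordinates reflects the upper-triangular (rather than symplectic) nature of the $x_i$, but this is irrelevant. What matters is only that $f$ is a bijection fixing $0$, so that any three of $\phi(w),\phi(x),\phi(y),\phi(z)$ determine the fourth. This is precisely the hypothesis used in Theorem~\ref{safe-sets-work}, and, as noted at the beginning of Section~\ref{sec:applications}, neither monotonicity nor continuity nor the symplectic form of the relation is used anywhere in the recovery argument. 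The convexity results of Section~\ref{sec:convexity} are purely combinatorial statements about $M$ and carry over unchanged.

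With the dictionary in place, injectivity of $(f_1,\dots,f_l)\mapsto X$ is exactly the strong recoverability of $M$: distinct tuples $(f_k)$ are distinct conductance structures on $M$, and recoverability says they yield distinct boundary relations. This is supplied by running the proof of Theorem~\ref{main-result} verbatim. One locates a boundary triangle of $M$, which exists by Theorem~\ref{boundary-triangles-exist} after empty circles and boundary digons have been stripped away, sets up the boundary-value problem of Lemma~\ref{problem-set-up} that isolates it, solves that problem by safe-set propagation (Theorem~\ref{safe-sets-work}), reads off the associated $f_k$ from $\Xi$, and then inducts on the critical medial graph obtained by uncrossing the triangle, which has strictly fewer cells. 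The main obstacle, and the only genuinely new work, is the bookkeeping of the previous paragraph: verifying that the local relation coming from $x_i(f)$ really is of the admissible four-cell bijective form, and that the two ends of the wiring diagram correspond correctly to the boundary cells of $M$, so that $X$ and $\Xi$ determine one another. Once that correspondence is pinned down, no part of the recovery argument needs to be reproved, and the statement follows just as Proposition~\ref{variant-three} follows in the linear case.
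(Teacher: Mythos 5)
Your high-level plan (encode the composite map as a boundary relation on the critical medial graph of the wiring diagram of $\mathbf{i}$, then layer-strip as in Theorem~\ref{main-result}) is the route the paper itself gestures at: the paper gives no detailed proof, saying only that the proposition ``can be proven in the same way as Theorem~\ref{main-result}, though a bit of ingenuity is needed to make the arguments of \S\ref{sec:convexity} and \S\ref{sec:nonlinear-recovery} carry over to this context.'' But your proposal has a genuine gap exactly at the point where that ingenuity is required: the local relation you claim is wrong. The gate $x_i(f)$ sends $(z_i,z_{i+1})$ to $(z_i+f(z_{i+1}),\,z_{i+1})$; crucially, it fixes $z_{i+1}$. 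Around the corresponding crossing there are four chambers --- $D$ below, $U$ above, and the middle chambers $L$ (before) and $R$ (after) --- and the only value a chamber labelling can change across the crossing is the middle one. If the coordinates are read off as consecutive chamber differences $z_j=\phi_j-\phi_{j-1}$, then invariance of $z_{i+1}$ gives $U-R=U-L$, i.e.\ $R=L$, whence $z_i$ cannot change either; the relation collapses, and the same collapse occurs for the other natural encodings (sums, alternating signs). So there is \emph{no} single four-cell relation of the shape of (\ref{consistency-equation}) equivalent to $x_i(f)$. The electrical case works on cells precisely because $u_{2i}$ changes \emph{two} current coordinates by opposite amounts (one white-cell value moving), whereas $x_i$ changes one coordinate and passes the other through untouched.

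The honest model therefore puts values on the strand segments (edges of the arrangement), with \emph{two} relations per crossing: out-top $=$ in-top, and out-bottom $=$ in-bottom $+\,f(\text{in-top})$. With this, the property on which the entire propagation machinery rests --- that any three of the four values at a crossing determine the fourth (Theorem~\ref{safe-sets-work}) --- fails: knowing the two values on the through-going strand imposes only a consistency condition and leaves the other strand's values a one-parameter family. Consequently rank, closure, safe simple extensions, Lemma~\ref{problem-set-up}, and the boundary-triangle uncrossing step all have to be re-founded for this asymmetric, direction-sensitive edge propagation; you cannot, as you assert, use \S\ref{sec:convexity}--\S\ref{sec:nonlinear-recovery} ``unchanged'' or run Theorem~\ref{main-result} ``verbatim.'' Your final paragraph rightly identifies the verification of the local relation as the only genuinely new work, but that verification is precisely the step that fails as stated, and supplying its replacement (an edge-labelling analogue of safe sets, or a genuinely cleverer encoding) is the actual content of the proposition --- the ``bit of ingenuity'' the paper flags and leaves to the reader.
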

This can be proven in the same way as Theorem~\ref{main-result}, though a bit of ingenuity is needed to make the arguments of
\S \ref{sec:convexity} and \S \ref{sec:nonlinear-recovery} carry over to this context.  There are probably simpler proofs of Propositions \ref{variant-four}
through \ref{variant-six}.

The factorization schemes for totally nonnegative upper triangular unipotent matrices are part of a broader story of factorization schemes for totally nonnegative matrices.
In light of this, we make the following conjecture, which would generalize results of Lusztig~\cite{LusztigTotalPosReductiveGroups}:
\begin{conjecture}
For $1 \le i \le 2n$, and $f \in \mathbb{S}$, let $y_i(f)$ and $h_i(f)$ be the bijections $\mathbb{R}^{2n+1} \to \mathbb{R}^{2n+1}$ given by
\[ y_i(f)(z_1,\ldots,z_n) = (z_1,\ldots,z_i,z_{i+1} + f(z_i), z_{i+2}, \ldots, z_n)\]
\[ h_i(f)(z_1,\ldots,z_n) = (z_1,\ldots,z_{i-1},f(z_i),z_{i+1},\ldots,z_n).\]
If $i_1 \ldots i_l$ is a reduced word and so is $j_1 \ldots j_m$, then the map on $\mathbb{S}^{l + (2n+1) + m}$ sending
\[ (f_1, \ldots, f_l, g_1, \ldots, g_{2n+1}, k_1, \ldots, k_m) \in \mathbb{S}^{l + (2n+1) + m}\]
to
\[ x_{i_1}(f_1) \circ \cdots \circ x_{i_l}(f_l) \circ h_1(g_1) \circ \cdots \circ h_{2n+1}(g_{2n+1})
\circ y_{j_1}(k_1) \circ \cdots \circ y_{j_m}(k_m)\]
should be injective.
\end{conjecture}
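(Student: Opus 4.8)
The plan is to recognize the product
$\Phi = x_{i_1}(f_1) \circ \cdots \circ x_{i_l}(f_l) \circ h_1(g_1) \circ \cdots \circ h_{2n+1}(g_{2n+1}) \circ y_{j_1}(k_1) \circ \cdots \circ y_{j_m}(k_m)$
as a nonlinear Gauss (Bruhat) decomposition $\Phi = X \circ H \circ Y$, where $X = x_{i_1}(f_1)\circ\cdots\circ x_{i_l}(f_l)$, $H = h_1(g_1)\circ\cdots\circ h_{2n+1}(g_{2n+1})$, and $Y = y_{j_1}(k_1)\circ\cdots\circ y_{j_m}(k_m)$, and to split the injectivity claim into two independent parts: first, that $\Phi$ determines the three factors $X$, $H$, $Y$ individually; and second, that each factor determines its own sequence of functions. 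The point of the splitting is that the second part is essentially available. Indeed $H$ is literally the diagonal bijection $z \mapsto (g_1(z_1), \ldots, g_{2n+1}(z_{2n+1}))$, so each $g_k$ is read off as $g_k(t) = (H(t e_k))_k$; the recovery of $(f_1, \ldots, f_l)$ from $X$ is exactly Proposition~\ref{variant-six}; and the recovery of $(k_1, \ldots, k_m)$ from $Y$ will follow from the same proposition after a symmetry.

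For the first part I would exploit the triangular structure of the three factor types. Calling a bijection \emph{upper unipotent} when $(F(z))_k = z_k + (\text{a function of } z_{k+1}, \ldots, z_{2n+1})$ for every $k$, and \emph{lower unipotent} symmetrically, one checks directly that the $x_i(f)$ are upper unipotent, the $y_i(f)$ are lower unipotent, and that each class is closed under composition and under inversion (the inverse is computed by triangular back-substitution). Hence any two decompositions $\Phi = X \circ H \circ Y = X' \circ H' \circ Y'$ give $A \circ H = H' \circ B$ with $A = (X')^{-1} X$ upper unipotent and $B = Y' Y^{-1}$ lower unipotent. The crux is then the nonlinear analogue of ``upper triangular $\cap$ lower triangular $=$ diagonal.'' Comparing the $k$-th coordinates yields, for all $z$,
\[ g_k(z_k) + a_k\bigl(g_{k+1}(z_{k+1}), \ldots, g_{2n+1}(z_{2n+1})\bigr) = g'_k\bigl(z_k + b_k(z_1,\ldots,z_{k-1})\bigr), \]
where $a_k, b_k$ are the off-diagonal parts of $A, B$. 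Fixing $z_k$ and the higher coordinates and varying the lower ones makes the left side constant; since $g'_k$ is a bijection this forces $b_k$ to be constant, and zero-preservation ($B(0)=0$) makes it $0$. With $B = \mathrm{id}$ the same equation, now with $z_k$ fixed and the higher coordinates varying, forces $a_k$ constant and hence $0$, so $A = \mathrm{id}$ and $g_k = g'_k$. Thus $X = X'$, $H = H'$, $Y = Y'$; notably this argument uses neither continuity nor monotonicity.

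It remains to recover $(k_1, \ldots, k_m)$ from $Y$. Here I would use the order-reversing involution $\tau(z_1, \ldots, z_{2n+1}) = (z_{2n+1}, \ldots, z_1)$, which one verifies satisfies $\tau \circ y_i(f) \circ \tau = x_{2n+1-i}(f)$. Conjugation therefore carries $Y$ to the upper-unipotent product $x_{2n+1-j_1}(k_1) \circ \cdots \circ x_{2n+1-j_m}(k_m)$, whose word is the image of $j_1 \ldots j_m$ under the Dynkin-diagram automorphism $i \mapsto 2n+1-i$ of type $A_{2n}$ and is therefore still reduced. Proposition~\ref{variant-six} then recovers $(k_1, \ldots, k_m)$ from $\tau \circ Y \circ \tau$, and hence from $Y$, completing the reduction.

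The genuinely new content is the factor-uniqueness of the third paragraph, but as sketched it succumbs to a short triangular-peeling argument, so the real weight of the proof is inherited from Proposition~\ref{variant-six}, whose own proof the paper only outlines (``the same way as Theorem~\ref{main-result}, though a bit of ingenuity is needed''). I therefore expect the main obstacle to be not the combination but the honest verification of Proposition~\ref{variant-six} itself --- in particular, adapting the convexity machinery of \S\ref{sec:convexity} and the boundary-value recovery of \S\ref{sec:nonlinear-recovery} to the upper-unipotent setting. Once that is in hand, the conjecture follows by the two-part reduction above.
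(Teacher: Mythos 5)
Be aware of what you were up against: the paper contains no proof of this statement at all --- it is stated as a conjecture, with only the remark that it would generalize Lusztig's results. So there is no proof of the author's to compare yours with; the relevant question is whether your argument stands on its own, and as far as I can verify, it does, conditionally on Proposition~\ref{variant-six}. Your three structural claims check out: compositions of the $x_i(f)$ do satisfy $(F(z))_k = z_k + (\text{a function of } z_{k+1},\ldots,z_{2n+1})$ and this class is closed under composition and inversion (back-substitution, exactly as you say), dually for the $y_i(f)$; the middle factor is literally the diagonal map $z \mapsto (g_1(z_1),\ldots,g_{2n+1}(z_{2n+1}))$; and every factor fixes the origin because members of $\mathbb{S}$ do. The peeling step in $A \circ H = H' \circ B$ is sound: injectivity of $g'_k$ forces $b_k$ constant and $B(0)=0$ makes it vanish, then surjectivity of the $g_j$ (they are bijections) plus $A(0)=0$ kills $a_k$ and yields $g_k = g'_k$ --- and you correctly use only bijectivity and $f(0)=0$, never continuity or monotonicity, in keeping with the paper's theme. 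The flip identity $\tau \circ y_i(f) \circ \tau = x_{2n+1-i}(f)$ is a correct direct computation, $\tau^2 = \mathrm{id}$ preserves the order of the factors, and $i \mapsto 2n+1-i$ preserves the Coxeter matrix of $A_{2n}$, hence reducedness, so Proposition~\ref{variant-six} applies to $\tau \circc Y \circ \tau$ --- correcting the typo, to $\tau \circ Y \circ \tau$. One structural observation worth making explicit: your Gauss-decomposition uniqueness uses no reducedness whatsoever; reducedness enters only when Proposition~\ref{variant-six} is invoked on the two unipotent factors.

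The honest caveat, which you yourself flag, is that Proposition~\ref{variant-six} is asserted in the paper with only the gloss that it ``can be proven in the same way as Theorem~\ref{main-result}, though a bit of ingenuity is needed,'' so your proof of the conjecture is conditional on a statement whose detailed verification the paper also omits. Granting the paper's propositions at face value, your two-part reduction settles the conjecture, which is genuinely new content relative to the paper; if you want an unconditional result, the remaining work is exactly where you located it, namely adapting the machinery of \S\ref{sec:convexity} and \S\ref{sec:nonlinear-recovery} to the unipotent setting to prove Proposition~\ref{variant-six} honestly.
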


\section{Closing Remarks}
We have shown that if $\Gamma$ is a circular planar graph, then $\Gamma$ is strongly recoverable if and only if its medial graph is critical.
This extends results of \cite{ReseauxElectriques} and \cite{CIM}, who showed the same for weak recoverability.  An interesting
corollary is that $\Gamma$ is weakly recoverable if and only if it is strongly recoverable.
An obvious conjecture is that this holds for general $\Gamma$, not necessarily circular planar.  However, it may be necessary to put further restrictions
on conductance functions (like monotonicity or smoothness).  For example, monotonicity and smoothness are almost certainly
sufficient to guarantee non-linear recoverability in the highly non-planar 3-dimensional lattice graphs considered in \cite{ThreeDeeLattice}, by a simple argument
unrelated to the one used in the present paper.

The recovery algorithm presented above makes surprisingly few assumptions on conductance functions.  This suggests that there might
be interesting applications of our method.  Letting currents and voltages live in finite fields might have interesting combinatorial applications.
In the linear case, we have shown that the rational map from conductances to response matrices remains injective and somewhat well-defined
when extended from $(\mathbb{R}_{> 0})^n$ to $(\mathbb{C} \setminus \{0\})^n$.  This is a purely algebraic statement, which might have some uses.

We also note that Ian Zemke has generalized some of the results and approaches of this paper to infinite networks in \cite{Zemke}.

\subsection{Acknowledgments}
This work was done in part during 
REU programs at the University of Washington in 2010 and 2011, and in part while supported by the NSF Graduate Research Fellowship Program in the Autumn of 2011
and Spring of 2012.  The author would like to thank Jim Morrow for introducing him to the eletrical recovery problem, and Richard Kenyon and David Wilson for informing
the author of many of the recent papers on the subject, including \cite{ElectricalLieTheory}.

\bibliographystyle{plain}
\bibliography{Master}{}

\end{document}